\newcommand{\QQ}{\mathbb{Q}}
\newcommand{\GG}{ G}
\newcommand{\RR}{\mathbb{R}}
\newcommand{\CC}{\mathbb{C}}
\newcommand{\oF}{\overline F}
\newcommand{\ZZ}{\mathbb{Z}}
\newcommand{\doots}{,\dots ,}
\newcommand{\Akk}{\mathfrak A_4}
\newcommand{\OO}{\mathcal O}
\newcommand{\Fv}{F_v}
\newcommand{\sG}{\prescript{}{\sigma}G}
\newcommand{\Ov}{\mathcal O _v}
\newcommand{\vMF}{v\in M_F}
\newcommand{\vMFz}{v\in M_F ^0}
\newcommand{\vMFi}{v\in M_F ^\infty}
\newcommand{\AAA}{\mathbb A}
\newcommand{\yyy}{\mathbf{y}}
\newcommand{\sJ}{\prescript{}{\sigma}J}
\newcommand{\wH}{\widehat H}
\newcommand{\wchi}{\widetilde{\chi}}
\newcommand{\piv}{\pi_v}
\newcommand{\BGA}{BG(\AAA)}
\newcommand{\Gfs}{G(\overline{F})}
\newcommand{\muu}{\boldsymbol{\mu}}
\DeclareMathOperator{\Aut}{Aut}
\DeclareMathOperator{\coun}{count}
\DeclareMathOperator{\Conj}{Conj}
\DeclareMathOperator{\ind}{ind}
\DeclareMathOperator{\End}{End}
\DeclareMathOperator{\ord}{ord}
\DeclareMathOperator{\Pic}{Pic}
\DeclareMathOperator{\coker}{coker}
\DeclareMathOperator{\Hom}{Hom}
\DeclareMathOperator{\Gal}{Gal}
\DeclareMathOperator{\GL}{GL}
\DeclareMathOperator{\SL}{SL}
\DeclareMathOperator{\cycl}{cycl}
\DeclareMathOperator{\Imm}{Im}
\DeclareMathOperator{\supp}{supp}
\DeclareMathOperator{\Spec}{Spec}
\DeclareMathOperator{\cont}{cont}
\DeclareMathOperator{\charr}{char}
\DeclareMathOperator{\Tr}{tr}
\DeclareMathOperator{\tr}{tr}
\DeclareMathOperator{\Frob}{Frob}
\DeclareMathOperator{\un}{un}
\DeclareMathOperator{\inv}{inv}
\DeclareMathOperator{\tame}{{tame}}
\DeclareMathOperator\et{\acute et}
\def\no{n\textsuperscript{0}\,}
\DeclareMathOperator{\Br}{Br}
\newtheorem{mydef}[equation]{Definition}
\newtheorem{lem}[equation]{Lemma}
\newtheorem{thm}[equation]{Theorem}
\newtheorem{conj}[equation]{Conjecture}
\newtheorem{prop}[equation]{Proposition}
\newtheorem{rem}[equation]{Remark}
\newtheorem{cor}[equation]{Corollary}
\newtheorem*{theorem*}{Theorem}
\numberwithin{equation}{subsection}
  \DeclareFontFamily{U}{wncy}{}
    \DeclareFontShape{U}{wncy}{m}{n}{<->wncyr10}{}
    \DeclareSymbolFont{mcy}{U}{wncy}{m}{n}
    \DeclareMathSymbol{\Sh}{\mathord}{mcy}{"58} 
\keywords{Malle conjecture, $G$-torsor, Poisson formula, Generalized discriminant}
\title{Torsors for finite group schemes of bounded height}
\author[Darda]{Ratko Darda}
\address{Department of Mathematics and Computer Science\\ University of Basel}
\email{ratko.darda@gmail.com}
\author[Yasuda]{Takehiko Yasuda}
\address{Department of Mathematics\\ Graduate School of Sciences\\Osaka University}
\email{yasuda.takehiko.sci@osaka-u.ac.jp}
\begin{document}
\begin{abstract}
Let~$F$ be a global field. Let~$G$ be a non trivial finite \' etale tame $F$-group scheme. We define height functions on the set of $G$-torsors over~$F,$ which generalize the usual heights such as discriminant. As an analogue of the Malle conjecture for group schemes, we formulate a conjecture on the asymptotic behaviour of the number of $G$-torsors over~$F$ of bounded height. This is a special case of our more general ``Stacky Batyrev-Manin" conjecture \cite{dardayasudabm}. The conjectured asymptotic is proven for the case when~$G$ is commutative. When~$F$ is a number field, the leading constant is expressed as a product of certain arithmetic invariants of~$G$ and a volume of a space attached to~$G$. Moreover, an equidistribution property of $G$-torsors in the space is established. 
\end{abstract}
\maketitle
\section{Introduction}
\subsection{Malle conjecture} A classical result, the Hermite-Minkowski theorem, states that the number of extensions of a number field~$F$ having bounded discriminant is finite. It is a very natural question to ask what is this number. Moreover, one can fix  an invariant, such as the Galois group of the Galois closure, and count such extensions. In this context, we have the famous Malle conjecture:
\begin{conj}[{Malle \cite{Malle}}] \label{classmalle}
Let~$G$ be a non-trivial group which is embedded as a transitive subgroup of the group of permutations~$\mathfrak S_n$ for some $n\geq 1$. For an extension $K/F$ we write $N(\Delta(K/F))$ for the norm of its discriminant and $\Gal(K/F)=G$ if the Galois group of the Galois closure of~$K/F$ is isomorphic to~$G$ as a permutation group. There exists~$C>0$ such that
\begin{equation*}\#\{K/F|[K:F]=n, \Gal(K/F)=G, N(\Delta(K/F))<B\} \sim_{B\to\infty} C B^a\log(B)^{b-1},\end{equation*}
for some explicit invariants~$a=a(G\leq \mathfrak S_n)$ and~$b=b(G\leq \mathfrak S_n, F)$.
\end{conj}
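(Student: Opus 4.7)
The plan is to recast the count as a count of $G$-torsors over $\Spec F$ and to attack it by adelic harmonic analysis on the classifying stack of $G$. A transitive embedding $G\leq\mathfrak S_n$ gives a stabiliser subgroup $H\leq G$, and degree $n$ extensions $K/F$ with Galois group $G$ correspond to $G(\oF)$-conjugacy classes of surjections $\Gal(\oF/F)\twoheadrightarrow G$, i.e.\ to a distinguished subset of $H^1(F,G)$. The Artin conductor--discriminant formula turns $N(\Delta(K/F))$ place by place into a local invariant of the restriction $\Gal(\oF_v/F_v)\to G$, and assembling these produces a height function $H\colon H^1(F,G)\to\RR_{>0}$. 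The counting problem then becomes the analysis of the height zeta function
\begin{equation*}
Z(s)=\sum_{\varphi}H([\varphi])^{-s},
\end{equation*}
where $\varphi$ runs over the surjective classes in $H^1(F,G)$.

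In the abelian case, class field theory identifies $H^1(F,G)$ with continuous characters of the idele class group of $F$, and the height factors over places. The function $Z(s)$ admits an Euler product whose local factors can be computed via the tame local mass formula in the style of Serre, and Fourier/Poisson summation on the adelic points of the Cartier dual linearises the surjectivity condition. Meromorphic continuation of $Z(s)$ past its first pole, together with a Delange--Landau Tauberian theorem, yields the asymptotic $CB^a\log(B)^{b-1}$, the pole location giving $a$ and the pole order giving $b$. Combinatorially, one expects
\begin{equation*}
a(G\leq\mathfrak S_n)=\frac{1}{\min_{g\in G\setminus\{1\}}\bigl(n-o(g)\bigr)},
\end{equation*}
where $o(g)$ is the number of $\langle g\rangle$-orbits on $\{1,\dots,n\}$ (precisely the discriminant exponent at a tamely ramified prime with inertia generator $g$), while $b$ is the number of $\Gal(\oF/F)$-orbits among the conjugacy classes that minimise this exponent. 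The leading constant $C$ is then read off from the residue of $Z(s)$ at $s=a$, assembled as a product of local densities times a global factor.

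The main obstacle is the non-abelian case. For non-abelian $G$ there is no Pontryagin dual, no linearisation of the height by characters, and hence no direct Poisson summation. One is forced either into non-abelian class field theory (unavailable), or into induction on the subgroup structure of $G$ by sieving smaller quotients for surjectivity, or into geometric input such as Hurwitz spaces and their \'etale cohomology over function fields; none of these is yet strong enough to yield the full conjecture in general. Even in the abelian case, the value of $b$ is sensitive to arithmetic subtleties coming from roots of unity in $F$ (the phenomenon behind the T\"urkelli correction), so that a careful treatment of twists in the local factors of $Z(s)$ is required to obtain the conjectural constant and an honest equidistribution statement for the torsors contributing to the leading term.
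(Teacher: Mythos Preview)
The statement you are attempting to prove is Conjecture~\ref{classmalle}, and the paper contains no proof of it: it is presented as a conjecture, not a theorem. More to the point, the paper explicitly records that the conjecture as stated is \emph{false}: Kl\"uners gave a counterexample for $G=(\ZZ/3\ZZ)\wr(\ZZ/2\ZZ)$ in its degree~$6$ representation, where the exponent~$b$ predicted by Malle is too small. So there is nothing to compare your proposal against, and any purported proof of the full statement must be wrong somewhere.

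Your sketch tacitly acknowledges this: you note that the non-abelian case lacks the harmonic-analytic machinery, and you allude to the ``T\"urkelli correction'' for~$b$. But you do not draw the conclusion that the target statement is unprovable as written. The paper's response to this situation is not to prove Conjecture~\ref{classmalle} but to replace it: it introduces a notion of \emph{secure} torsors (removing images of ``breaking thin'' maps coming from subgroups of inner twists whose $(a,b)$-invariants exceed those of~$G$) and conjectures the expected asymptotic for secure torsors only (Conjecture~\ref{malletale}). That refined conjecture is then proved for commutative~$G$ via exactly the adelic Poisson-summation strategy you outline---so your abelian sketch is essentially the paper's method, but applied to the corrected conjecture rather than to Malle's original one. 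In the commutative case no breaking thin maps exist, which is why the method goes through without needing to excise anything; in the non-abelian case the removal is genuinely necessary, and your proposal has no mechanism for it.
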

\subsubsection{}The conjecture can be stated for global fields. We list a few of the cases when the Malle conjecture is known to be true:
\begin{itemize}
\item~$G$ is abelian, of order coprime to the characteristic of~$F$ if the characteristic is positive, embedded in its regular representation~\cite{Wright};
\item~$F$ is not of characteristic~$2$, and~$G=\mathfrak S_i$ for $i\in\{3,4,5\}$ in its natural representation \cite{https://doi.org/10.48550/arxiv.1512.03035}, building upon \cite{dave}, \cite{densityquartic}, \cite{densityquintic};
\item~$F$ is a number field, $G$ is~$\mathfrak S_3$ embedded in its regular representation~ \cite{sexticsthree};
\item~$F$ is a number field and $G=\mathfrak S_i\times A$ for $i\in\{3, 4, 5\}$, where~$\mathfrak S_i$ is in its natural representation and~$A$ is abelian in its regular representation \cite{https://doi.org/10.48550/arxiv.2004.04651}, \cite{wangmalle}.
\end{itemize}
A counterexample to Conjecture \ref{classmalle} has been given in \cite{Kluners} by Kl\" uners for $G=(\ZZ/3\ZZ)\wr (\ZZ/2\ZZ)$ embedded in its degree~$6$ representation. 
It rests unproven for some usual groups such as the alternating group~$\Akk$ embedded in its standard or regular representation. 
\subsubsection{}\label{malleortorsors} Let~$G_0$ be a point stabilizer for a permutation group~$G\hookrightarrow~\mathfrak S_n.$ For a {\it connected} $G$-torsor~$\Spec(L),$ one has that~$L^{G_0}$ is a degree~$n$ extension of~$F$ having~$G$ for the Galois group of its Galois closure. The fibers of the map $\Spec(L)\mapsto L^{G_0}$ have all equal finite cardinality (by e.g. \cite[Lemma 5.8]{https://doi.org/10.48550/arxiv.1408.3912}). 
We deduce that the Malle conjecture can be studied as a question of counting connected $G$-torsors~$\Spec(L)$, with respect to the discriminant of ~$L^{G_0}$. 
\subsection{Non-constant $G$} In this article, we will study the question of counting $G$-torsors for non-constant finite group schemes~$G$. This question appears natural on its own, but let us provide some more motivation to study it. 
\subsubsection{} Our first motivation comes from recent attempts \cite{darda:tel-03682761}, \cite{Ellenberg} to extend a conjecture of Manin on number of rational points of bounded height on {\it Fano} varieties \cite{frankemanintschinkel} to {\it algebraic stacks}. 
The articles try to explain {\it \` a la Manin} some counting results of points of stacks (such as~\cite{Hortsch}), as well as similarities between the Manin and the Malle conjectures, observed by the second named author in \cite{Yasuda}. For that purpose, definitions of heights for stacks are provided. The generalization of the Manin conjecture for stacks has not been completed: in \cite{darda:tel-03682761} one is restricted to {\it weighted projective stacks}, while in \cite{Ellenberg}, the authors predict the exponent of~$B$ up to an~$\epsilon$ in an asymptotic formula on the number of points, but no leading constant or the exponent of the~$\log(B)$. In the article \cite{dardayasudabm} we will work on this question.

For a finite group scheme~$G$, we have the {\it classifying stack}~$BG$. It is of dimension zero and its set of rational points~$BG(F)$ is precisely the set of $G$-torsors. It looks like a very natural candidate to try to formulate more precise conjecture for the numbers of rational points of bounded height (a ``Manin conjecture"). If~$G$ is constant, the Malle conjecture predicts the number of rational points of~$BG$ for the height given by the discriminant (or the discriminant of a fixed field as above for some embedding $G\hookrightarrow \mathfrak S_n$). But it does not tell anything for non-constant~$G$ and hence does not explain counting results such as \cite[Corollary 9.2.5.10]{darda:tel-03682761} when~$G=\muu_m$ is the finite group scheme of $m$-th roots of unity. 
\subsubsection{} Our second motivation is that torsors of non-constant group schemes may influence the counting of the torsors of constant groups.  Let~$\sG$ be an inner twist (see Paragraph \ref{paragtwist}) of a finite group~$G$. It is a finite group scheme, but usually non-constant. There exists a canonical bijection from the set of~$\sG$-torsors to the set of~$G$-torsors. For a closed subgroup~$R$ of~$\sG,$ we have a map from the set of~$R$-torsors to the set of~$\sG$-torsors which has fibers of bounded cardinality. We obtain a canonical map, with fibers of bounded cardinality, from the set of~$R$-torsors to the set of~$G$-torsors. Thus the torsors of the non-constant finite group~$R$ may influence the number of~$G$-torsors. 
%
\subsection{Principal results}
Let us state our prediction for the number of $G$-torsors of bounded height and our principal results. Let~$F$ be a global field and let~$G$ be a non-trivial finite \' etale {\it tame} $F$-group scheme (we say that a finite group scheme defined over a field is tame, if the characteristic of the field is zero or if the order of the group is coprime to the characteristic). 
\subsubsection{}In Definition \ref{definitionofheight}, we define height functions $H:BG(F)\to\RR_{>0}$ which satisfy that the number of $G$-torsors of bounded height is finite (Proposition \ref{estimatenumber}). The heights capture some usual notions of ``size" of $G$-torsors (e.g. the notion of a discriminant). For heights, we have notions of its~$a$ and~$b$ invariants. 

We fix a height $H:BG(F)\to\RR_{>0}$. As we have seen above, for every inner twist~$\sG$ and every closed subgroup~$R$ of~$\sG$, we have a canonical map $BR(F)\to BG(F)$. We call such a morphism {\it breaking thin}\footnote{The terminology is borrowed from \cite{geometricconsitencymanin}.}, if in the lexicographic order, the pair consisting of~$a$ and~$b$ invariants of the pullback height (that is the composite function $BR(F)\to BG(F)\xrightarrow{H}~\RR_{>0}$) is bigger than the pair consisting of~$a$ and~$b$ invariants of~$H$. Let us say that an element of~$BG(F)$ is secure, if it is not in the image of a breaking thin map $BR(F)\to BG(F)$. As an analogue of the Malle conjecture, we propose the following conjecture.
\begin{conj}\label{introversionconj}
\begin{enumerate}
\item Suppose that~$F$ is a number field. There exists~$C>0$ such that $$\#\{x\in BG(F)|\text{$x$ is secure and }H(x)\leq B\}\sim_{B\to \infty} CB^{a(H)}\log (B)^{b(H)-1}.$$
\item Suppose that~$F$ is a function field. One has that $$\#\{x\in BG(F)|\text{$x$ is secure and }H(x)\leq B\}\asymp_{B\to \infty} B^{a(H)}\log (B)^{b(H)-1}. $$
\end{enumerate}
\end{conj}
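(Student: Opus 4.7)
My plan is to treat the commutative case by a height zeta function / Poisson summation argument, in the spirit of the Batyrev--Manin program carried out by Batyrev--Tschinkel, Chambert-Loir--Tschinkel and Wright's treatment of abelian extensions. The starting point is that for a commutative finite \'etale tame $F$-group scheme $G$, the pointed set $BG(F)=H^1(F,G)$ is an abelian group, and the local analogues $BG(F_v)=H^1(F_v,G)$ assemble into a locally compact adelic group $BG(\AAF)=\prod'_v BG(F_v)$ in which $BG(F)$ sits as a discrete, cocompact (or finite covolume) subgroup via the global-to-adelic map. Thus the counting problem becomes one of counting lattice points in an adelic abelian group with respect to a height coming from local factors.

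The first step is to make the height adelic: by Definition \ref{definitionofheight} the function $H\colon BG(F)\to\RR_{>0}$ should decompose as $H(x)=\prod_v H_v(x_v)$ for $x_v$ the image of $x$ in $BG(F_v)$, with $H_v\equiv 1$ outside a finite set $S$ and with tame behavior at the remaining places. From this I would form the height zeta function
\begin{equation*}
Z(s)=\sum_{x\in BG(F)} H(x)^{-s},
\end{equation*}
and apply Poisson summation on the locally compact abelian group $BG(\AAF)$ with respect to its discrete subgroup $BG(F)$. This rewrites $Z(s)$ as a sum over characters $\chi$ of $BG(\AAF)/BG(F)$ of global Fourier transforms, each of which factors as an Euler product $\prod_v \widehat{H_v}(\chi_v,s)$. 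Class field theory identifies the dual group with a subgroup of $\Hom(G_F,\widehat G)$, i.e.\ with continuous characters of the absolute Galois group valued in the Cartier dual.

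The second step is local analysis: for each place $v$ I would compute the local Fourier transform $\widehat{H_v}(\chi_v,s)$ explicitly, reducing to a finite sum over $H^1(F_v,G)$. The tameness assumption on $G$ makes this tractable; at unramified places with trivial local character one obtains a factor of the form $1+c_v q_v^{-a s}+O(q_v^{-2as})$ for the exponent $a=a(H)$ coming from the definition of the $a$-invariant, so that $\prod_v \widehat{H_v}(\chi_v,s)$ differs from a product of Dedekind-type $L$-functions $L(s,\chi)$ by a factor analytic and non-vanishing in a half-plane past $\Re(s)=a$. The $b$-invariant emerges as the order of the pole at $s=a$ coming from the trivial character (and from those characters whose local invariants conspire to produce a pole of the same order).

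The third step is global: I would establish a meromorphic continuation of $Z(s)$ past $\Re(s)=a(H)$, isolating the pole of order $b(H)$ at $s=a(H)$ contributed by the characters whose behavior matches the trivial one, and bounding the contribution of the remaining spectrum uniformly in $\Im(s)$ by a convexity or Phragm\'en--Lindel\"of estimate applied to the $L$-functions involved. A Tauberian theorem of Delange--Ikehara type (for number fields) or a Wiener--Ikehara variant adapted to function fields then yields the asymptotic $CB^{a(H)}\log(B)^{b(H)-1}$ for $\sum_{H(x)\leq B} 1$. The leading constant $C$ naturally factors as a product of local densities and a volume, giving the Peyre-type interpretation announced in the abstract.

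The last, and to my mind most delicate, step is to match the main term predicted by the raw Poisson computation with the counting of \emph{secure} torsors. The non-secure torsors are by definition those lying in the image of a breaking thin map $BR(F)\to BG(F)$ for some closed subgroup $R$ of an inner twist $\sG$; in the abelian setting, inner twists are trivial, so this reduces to understanding images $BR(F)\to BG(F)$ for subgroup schemes $R\subset G$. Such images correspond to characters of $G_F$ factoring through a proper quotient of the Cartier dual $\widehat G$, and they contribute asymptotics of strictly larger $(a,b)$ than that of $H$ on $BG(F)$. The task is to show, by an inclusion-exclusion over $R$, that the contribution of non-secure elements is of the same order (so the secure count has the predicted main term, with possibly a smaller constant), or that it is of strictly smaller order so that subtracting it leaves the main term intact. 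The hard part will be the uniform control of error terms in the Poisson expansion as one runs over the lattice of closed subgroup schemes, and ensuring that the resulting constant $C$ is strictly positive. In the function field case one also needs to replace the Delange--Ikehara step by a direct analysis of the rational function $Z(s)$ in $q^{-s}$, which gives the weaker $\asymp$ statement but avoids issues of oscillatory cancellation between poles on the critical line.
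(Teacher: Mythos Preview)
Your overall architecture---adelic height zeta function, Poisson summation over $BG(\mathbb{A}_F)$ against the discrete subgroup $i(BG(F))$, comparison of local Fourier transforms with Euler factors of Artin $L$-functions, and a Tauberian theorem---is exactly the paper's approach, and your sketch of steps~1--3 is essentially correct (modulo the slip that $H_v$ is not identically~$1$ off a finite set; rather $H_v(x)=q_v^{c(\psi_v(x))}$ there, and it is only for a \emph{fixed} $x$ that $H_v(x_v)=1$ for almost all~$v$).

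However, your ``last and most delicate step'' rests on a misunderstanding that inverts the actual situation. In the commutative case inner twists are trivial (as you note), so one only considers closed subgroups $\iota:R\hookrightarrow G$. The crucial observation, made in Paragraph~\ref{parbreakthin} using Lemma~\ref{funcinj}, is that for commutative~$G$ the induced map $\iota_*:R_*(\overline F)\to G_*(\overline F)$ is an \emph{injective} group homomorphism (since $\Conj=\Hom$ here). Consequently $a(\iota^*c)\leq a(c)$ and $b(\iota^*c)\leq b(c)$, so the pair $(a(\iota^*c),b(\iota^*c))$ is never strictly larger than $(a(c),b(c))$ in the lexicographic order. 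Hence there are \emph{no} breaking thin maps when $G$ is commutative, and every element of $BG(F)$ is secure. Your inclusion--exclusion over subgroup schemes, the worry about uniform error control across the lattice of~$R$, and the concern about positivity of the constant after subtraction are all unnecessary: the secure count equals the full count, and the constant is simply the residue you already computed in step~3.

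In particular, your claim that subgroup images ``contribute asymptotics of strictly larger $(a,b)$'' is exactly backwards; this reversal is what misled you into treating the secure/insecure dichotomy as the heart of the matter rather than a non-issue in the commutative setting.
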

A modification of Conjecture~\ref{classmalle} has been proposed by Turk\"eli~\cite{turkeli}. The author changes the exponent of $\log(B)$ by taking into account twists of normal subgroups~$N$ of~$G$ with~$G/N$ abelian. As part of our motivation is to extend the Manin conjecture for stacks, removal of torsors seems more natural than changing the logarithmic exponent: this what is usually done for the Manin conjecture e.g. in~\cite{peyrethin} and~\cite{geometricconsitencymanin}. In our next article \cite{dardayasudabm}, we will formulate a Manin type conjecture for certain stacks, the definition of a secure element carries naturally to this context.
\subsubsection{} We suppose that~$G$ is commutative. Then one has that every element of~$BG(F)$ is secure. The following theorem is one of our main results.
\begin{thm}\label{comcasmalle}
Conjecture \ref{introversionconj} is valid for commutative~$G$.
\end{thm}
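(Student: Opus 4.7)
The plan is to prove Theorem~\ref{comcasmalle} via adelic Fourier analysis---the Poisson summation formula, as suggested by the keywords. Since $G$ is commutative, $BG(F)=H^1(F,G)$ carries an abelian group structure, and torsors admit an adelic description: the localization map identifies $H^1(F,G)$ with a discrete subgroup of the restricted adelic product $H^1(F,\AAF):=\prod\nolimits'_v H^1(\Fv,G)$, cut out by the obstruction coming from Poitou--Tate duality with the Cartier dual $G^\vee$. Proposition~\ref{estimatenumber} already guarantees that the counting function is finite, so this is a well-posed asymptotic problem.

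First, I would realize the height $H:BG(F)\to\Rgz$ as a product $H=\prod_v H_v$ of local heights $H_v:H^1(\Fv,G)\to\Rgz$ (the construction underlying Definition~\ref{definitionofheight}), and pass to the Dirichlet series
\begin{equation*}
Z(s) := \sum_{x\in BG(F)} H(x)^{-s}.
\end{equation*}
Once one has meromorphic continuation of $Z$ to a neighborhood of $s=a(H)$ with a single dominant pole there of order $b(H)$, a suitable Tauberian theorem (Delange in the number field setting, a direct rational-function analysis in $q^{-s}$ in the function field setting) yields the two assertions of Conjecture~\ref{introversionconj}: the sharp $\sim$ over a number field, and the two-sided $\asymp$ over a function field, where infinitely many poles on the critical circle generically prevent a clean asymptotic.

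The core analytic input is the Poisson formula. View the sum defining $Z(s)$ as a sum over the discrete subgroup $H^1(F,G)\subset H^1(F,\AAF)$ of the function $\prod_v H_v^{-s}$, and expand it via Pontryagin duality: by Poitou--Tate the Pontryagin dual of $H^1(F,\AAF)/H^1(F,G)$ is controlled by $H^1(F,G^\vee)$. One obtains an identity of the shape
\begin{equation*}
Z(s) = C \cdot \sum_{\chi\in H^1(F,G^\vee)} \prod_v \wH_v(\chiv,s),
\end{equation*}
where $\wH_v(\chiv,s):=\sum_{x_v\in H^1(\Fv,G)} \chiv(x_v)\, H_v(x_v)^{-s}$ is a local Fourier--Mellin transform. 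The trivial character term factors as an Euler product of local Hecke-type $L$-factors whose rightmost pole at $s=a(H)$ has order exactly $b(H)$; this supplies the expected main term. Non-trivial characters produce Euler products with strictly smaller rightmost pole, contributing only to lower-order asymptotics.

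The principal obstacle will be the uniform analytic control of the sum over non-trivial characters: one needs uniform estimates on the local transforms $\wH_v(\chiv,s)$ at ramified places (whose set varies with $\chi$), together with a global argument turning the character sum into a function holomorphic on a half-plane strictly to the left of $\Re(s)=a(H)$. A closely related difficulty is the explicit identification of the leading constant as a product of Tamagawa-type local volumes and arithmetic invariants of $G$, matching the intrinsic expression advertised in the abstract; simultaneously proving the equidistribution statement requires running the Poisson argument with test functions, so that the main term is recognised as the adelic volume of the relevant subspace of $H^1(F,\AAF)$ rather than merely a constant. In the function field case, the additional bookkeeping of the periodic poles on the critical line is what forces the statement to remain at the level of a two-sided estimate.
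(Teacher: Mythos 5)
Your broad strategy — realize $H^1(F,G)$ as a discrete cocompact subgroup of a restricted adelic product, form the height zeta function, apply Poisson summation, and finish with a Tauberian argument — is indeed what the paper does. However, there is a genuine gap in the heart of the proposal.

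You assert that ``the trivial character term factors as an Euler product \ldots whose rightmost pole at $s=a(H)$ has order exactly $b(H)$'' and that ``non-trivial characters produce Euler products with strictly smaller rightmost pole, contributing only to lower-order asymptotics.'' This is not justified and, as stated, is not true in general. The paper's Lemma~\ref{lfunf} shows only that the pole order of the relevant $L$-function at $s=1$ is \emph{at most} $\#(W/\Gamma_F)=b(c)$ for a non-trivial twist, with equality happening exactly when $\chi$ is trivial on the fixed subspace argument degenerates — there is no a priori strict inequality. In fact the paper explicitly flags this as the gap in the Alberts--O'Dorney argument in the literature: without a genuine meromorphic continuation of the non-trivial terms strictly to the left of $s=1$, you only get an $o(B^{a(c)+\epsilon})$-type bound, not an asymptotic. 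The paper's resolution is different from what you propose: it first constructs explicit Artin-type Galois representations $\rho^W_{\chi}$ (Lemma~\ref{rhorep}) whose local Euler factors approximate the local Fourier transforms $\wH_v(s,\chi_v)$ up to a factor holomorphic past $\Re(s)=1$ (Lemma~\ref{ftlfun}), giving meromorphic continuation of each $\wH(\cdot,\chi)$ to $\Re(s)>\lambda^{-1}$ for some $\lambda>1$; and then it recovers the exact pole order $b(c)$ not by pole cancellation among characters but by a \emph{positivity argument}: re-applying Poisson in reverse writes $\sum_{\chi\in\Xi_H^\perp}\wH(s,\chi)$ as $\#\Xi_H^\perp\int_{\Xi_H}H^{-s}\,\mu$, a manifestly positive quantity whose $(s-1)^{b(c)}$-limit is strictly positive by Lemma~\ref{intfinind}.

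A secondary issue in your framing: the character sum in the Poisson formula is \emph{finite}. Since each local set $BG(F_v)$ is finite and the height is invariant under the open compact subgroup $K_H\subset BG(\mathbb{A})$, the Fourier transform $\wH(s,\chi)$ vanishes identically for any $\chi\notin\Xi_H^\perp$, and $\Xi_H^\perp$ is finite (Lemma~\ref{importantchar}). So the ``uniform estimates over ramified places varying with $\chi$'' that you list as a principal obstacle do not in fact arise; the real analytic work is the meromorphic continuation of each individual Fourier transform via comparison with the twisted Artin $L$-function, together with Brauer's theorem (number field case) or the rationality of Weil $L$-functions (function field case) to control the poles on the boundary line.

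Finally, you should also say a word about why every $x\in BG(F)$ is secure when $G$ is commutative (so that the secure-points statement reduces to counting all of $BG(F)$); this is noted in Paragraph~\ref{parbreakthin}: the map $R_*(\oF)\to G_*(\oF)$ is injective, forcing $b(\iota^*c)\le b(c)$, so there are no breaking thin maps.
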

When~$F$ is a number field and~$G$ is constant, this generalizes the above mentioned result of \cite{Wright} for heights given by discriminants and the result of Wood \cite[Theorem 3.1.]{onprobabilitieslocal} for so-called {\it fair} heights. It also generalizes the result of the first named author \cite[Theorem 8.2.2.12]{darda:tel-03682761} and \cite[Corollary 9.2.5.10]{darda:tel-03682761} for the case~$F$ is a number field and $G=~\muu_m,$ and {\it quasi-toric} and {\it quasi-discriminant} heights, respectively. We will discuss how it is compared with a result of Alberts and O'Dorney \cite{odorney} in Paragraph~\ref{detailscommutative}. 

We will define in Subsection \ref{secequid} a Radon measure~$\omega_H$ on the product space $\prod_{\vMF} BG(F_v)$ (the product goes over the set of places~$M_F$ of~$F$ and for~$v\in M_F$, the set $BG(F_v)$ is the set of $G$-torsors over~$F_v$). In Theorem \ref{satfunf} and Theorem \ref{thmofequidistribution}, we will establish an equidistribution property of $G$-torsors. Our motivation comes from the Manin conjecture, for which the analogous study of equidistribution was proposed by Peyre in \cite{Peyre}. The results are stronger than Theorem \ref{comcasmalle} and give us the following formula for the number of $G$-torsors satisfying certain local conditions.
\begin{thm}\label{equintro} Let $i:BG(F)\to\prod_{\vMF}BG(F_v)$ be the diagonal map. Let~$U\subset \prod_{\vMF}BG(F_v)$ be an open.
\begin{enumerate}  
\item Suppose that~$F$ is a number field and that the boundary of~$U$ is $\omega_H$-negligible. One has that \begin{equation*}\frac{\#\{x\in BG(F)|i(x)\in U, H(x)\leq B\}}{\# G(F)}\sim_{B\to\infty}\frac{\#\Sh^2(G)\cdot\omega_H(U)}{\# G^*(F)\cdot (b(c)-1)!}B^{a(H)}\log(B)^{b(H)-1},
\end{equation*}
where~$G^*$ denotes the Cartier dual of~$G$ and~$\Sh^2(G)$ is the second Tate-Shafarevich group of~$G$. 
\item Suppose that~$F$ is a function field. If $\omega_H(U)>0$, one has that $$\#\{x\in BG(F)|i(x)\in U, H(x)\leq B\}\asymp_{B\to\infty} B^{a(H)}\log(B)^{b(H)-1}.$$
\end{enumerate}
\end{thm}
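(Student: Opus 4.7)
The plan is to count $G$-torsors by adelic harmonic analysis. Since $G$ is commutative and tame, each $BG(F_v)=H^1(F_v,G)$ is a locally compact abelian group and the restricted product $\mathbf{H}:=\prod'_v H^1(F_v,G)$ is LCA; local Tate duality identifies its Pontryagin dual with the analogous restricted product for the Cartier dual $G^*$. The Poitou--Tate nine-term sequence identifies the image of the diagonal map $i\colon H^1(F,G)\to\mathbf{H}$ as, up to a kernel $\Sh^1(G)$ and a cokernel controlled by $\Sh^2(G)$, the annihilator of the diagonal $H^1(F,G^*)$ inside the dual group. This is what makes Poisson summation available.

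Because $H$ is defined place-by-place, it extends to $\mathbf{H}$ as a product $\mathbf{H}(\mathbf{x})=\prod_v H_v(x_v)$, and $U$ is locally a product of open subsets $U_v\subset BG(F_v)$. For $\Re(s)\gg 0$ the height zeta function
\begin{equation*}
Z_U(s)=\sum_{x\in BG(F),\ i(x)\in U}H(x)^{-s}
\end{equation*}
converges by Proposition~\ref{estimatenumber}. Applying Poisson summation to $H^{-s}\mathbf{1}_U$ for the above exact sequence, with local Haar measures normalized so that $G(F_v)$ has mass $1$, yields
\begin{equation*}
Z_U(s)=\frac{\#\Sh^2(G)}{\#G(F)\cdot\#G^*(F)}\sum_{\chi\in H^1(F,G^*)}\prod_v\widehat{H_v^{-s}\mathbf{1}_{U_v}}(\chi_v).
\end{equation*}
The factor $\#\Sh^2(G)/\#G^*(F)$ records the defect between $i(H^1(F,G))$ and the annihilator of $H^1(F,G^*)$ inside the dual, while the additional $1/\#G(F)$ absorbs the passage from the isomorphism-class count to the groupoid count appearing in the statement.

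The analytic heart of the argument is then the study of the individual twisted Euler products. For the trivial character $\chi=1$ one shows absolute convergence for $\Re(s)>a(H)$, meromorphic continuation to a neighbourhood of $s=a(H)$, a pole of order exactly $b(H)$ there, and identification of the leading Laurent coefficient with the local volume $\omega_H(U)$ in the normalization of Subsection~\ref{secequid}. For non-trivial $\chi$ one proves that the product either converges past $s=a(H)$ or has a pole of strictly smaller order: at almost all places the local Fourier transform is a finite sum of local $L$-factors that one evaluates explicitly, and the tameness of $G$ provides enough cancellation at the ramification of $\chi$ to strictly lower the pole order.

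A Delange--Ikehara--Landau Tauberian theorem applied to $Z_U(s)$, together with upper and lower smooth approximations of $\mathbf{1}_U$ made possible by the $\omega_H$-negligibility of $\partial U$, yields the claimed asymptotic in the number field case; in the function field case $Z_U(s)$ is a rational function of $q^{-s}$ and coefficient extraction gives only the order of magnitude once $\omega_H(U)>0$. The main obstacle will be the simultaneous analytic control of the infinite family of twisted Euler products indexed by $H^1(F,G^*)$: one must obtain uniform bounds on pole orders and on vertical growth in a strip to the left of $\Re(s)=a(H)$ in order to interchange the sum over $\chi$ with the Tauberian extraction, and to pin down the normalizations so that the leading constant reads exactly as $\#\Sh^2(G)\,\omega_H(U)/((b(H)-1)!\cdot\#G^*(F))$.
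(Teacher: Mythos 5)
Your overall framework---Poisson summation for $i(BG(F))\subset BG(\AAA)$, comparison of the Fourier transforms with twisted Euler products, and a Tauberian extraction---is the same as the paper's, and your bookkeeping of the constants $\#\Sh^2(G)/\#G^*(F)$ via the Tamagawa number of $BG$ is correct. But there are two genuine gaps in the analytic core.

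First, your sum runs over all of $H^1(F,G^*)$, an infinite set, and you identify ``simultaneous analytic control of the infinite family of twisted Euler products'' as the main obstacle. This obstacle does not exist in the paper's argument: Lemma~\ref{importantchar} shows that $\widehat H(s,\chi)=0$ unless $\chi$ lies in the \emph{finite} group $\Xi_H^\perp=(i(BG(F))K_H)^\perp$ (because the local Fourier transform $\widehat H_v(s,\chi_v)$ already vanishes unless $\chi_v$ is trivial on the invariance subgroup $K_{H_v}$, and by Lemma~\ref{finlocun} there are only finitely many automorphic characters unramified outside $\Sigma_H$). The sum in Proposition~\ref{poisson} is therefore finite from the outset, and no uniform bounds or interchange of limits are needed.

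Second, your claim that for non-trivial $\chi$ the Euler product ``either converges past $s=a(H)$ or has a pole of strictly smaller order'' is not proved in the paper and is, in general, false in this setting. Corollary~\ref{trivialcharacter}(1) only bounds the pole order of $\widehat H(s,\chi)$ by $b(c)$; a non-trivial automorphic character can contribute a pole of the full order $b(c)$, and tameness gives no cancellation of the kind you invoke. The paper instead extracts the positivity of the leading coefficient by re-applying Poisson summation in the opposite direction: it rewrites the finite sum $\sum_{\chi\in\Xi_H^\perp}\widehat H(s,\chi)$ as $\#\Xi_H^\perp\int_{\Xi_H}H^{-s}\,\mu$ (see the proof of Theorem~\ref{residueofz}), an integral over the open subgroup $\Xi_H$ which is manifestly positive for $s>1$ and whose leading Laurent coefficient at $s=1$ is shown to be positive by a comparison with $\int_{BG(\AAA)}H^{-s}\,\mu$ (Lemma~\ref{intfinind}). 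Without this step, your argument cannot rule out destructive interference among the non-trivial characters, and hence cannot establish that $Z_U$ has a pole of order exactly $b(c)$ at $s=1$. Finally, for the equidistribution over a general open $U$ with $\omega_H$-negligible boundary, the paper does not approximate $\mathbf 1_U$ directly but first establishes the asymptotic for elementary (finite-product) test functions, then uses Stone--Weierstrass and Peyre's / Igusa's vague-convergence criterion (\cite[Proposition 2.10]{Igusa}) to pass to general $f$ and then to $\mathbf 1_U$; your ``upper and lower smooth approximations'' sketch needs to be made precise along these lines to control the Tauberian step for a single indicator function.
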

The support of the measure~$\omega_H$ is given by the closure $\overline{i(BG(F))}\subset\prod_{\vMF}BG(F_v)$ which may be only a proper subset (when $G=\muu_m$, the conditions for the full support are given by the Grunwald-Wang theorem). In the situation of Part (1), we obtain that:
$$\lim_{B\to\infty}\frac{\#\{x\in BG(F)|i(x)\in U, H(x)\leq B\}}{\#\{x\in BG(F)|H(x)\leq B\}}=\frac{\omega_H(U)}{\omega_H(\prod_{\vMF}BG(F_v))}.$$
Hence, in particular, when~$G$ is constant and~$F$ is a number field, we have obtained an analogous formula to \cite[Theorem 2.1]{onprobabilitieslocal} without fairness restriction.  We will apply this result in our future paper \cite{dardayasudasemicomm} to solve the {\it inverse Galois problem} for so-called {\it semicommutative} finite group schemes.
\subsection{Overview}We present basic concepts and results of each of sections. We fix a non-trivial finite \' etale tame $F$-group scheme~$G$. 
\subsubsection{} In Section \ref{sectionheights}, a theory of heights is proposed. For that purpose a finite \' etale tame scheme~$G_*$ is defined. When~$G$ is constant its $F$-points are ``$F$-conjugacy classes" from \cite{Malle}. For every place~$v$ such that~$G$ is unramified and tame at~$v$, we have a {\it residue} map $\psi_v:BG(F_v)\to G_*(\Fv)\subset G_*(\oF)$. The scheme $G_*$ and the residue map are very convenient tools to define heights. We define a {\it counting function} to be $\Gamma_F$-invariant function $c: G_*(\oF)\to\RR_{\geq 0}$ with $c(x)=0$ if and only if $x=1$. One easily defines its~$a$ and~$b$ invariants (the earlier mentioned~$a$ and~$b$ invariants of heights are in fact invariants of counting functions). Heights are now defined to be the functions $H:BG(F)\to\RR_{>0}$ for which $v$-adic components~$H_v:BG(F_v)\to\RR_{>0}$ for almost all~$v$ are defined by $H_v(x)=q_v^{c(\psi_v(x))}$. At the end of the section, we state our conjecture. 
\subsubsection{} \label{detailscommutative} Section \ref{sectioncommutativecase} is devoted to the proof of Theorem \ref{equintro}. Let us sketch it. It uses harmonic analysis. The idea originates from a method used to count rational points on equivariant compactifications of algebraic groups, developed initially by Batyrev and Tschinkel for the case of anisotropic tori \cite{aniso}. In the context of counting field extensions, harmonic analysis was first used by Frei, Loughran and Newton in \cite{hassenormprin} (they also used it in \cite{prescnorms}). The method has been used in \cite{odorney} to count elements of $BG(F)$ when~$F$ is a number field and for so-called ``Frobenian invariants". It seems to us that there may be a gap in their analysis: in the proof of Theorem 1.1, the claimed meromorphic extension for non-trivial characters to the left of~$1/a_{\inv}(\mathcal L)$ in their notation does not appear to be automatic.  Without the meromorphic extension for non-trivial characters, one would only obtain an upper bound, such as~$o(B^{a(c)+\epsilon})$. We were told by Alberts that the gap should not be hard to fill by using a result of \cite{statfirstg}, which is based on a result of \cite{prescnorms}. For the sake of completeness, we give a proof using different arguments.

By using Tauberian results, the task becomes to analyse the Dirichlet series $Z(s):=\frac{1}{\# G(F)}\sum_{x\in BG(F)} H(x)^{-s}$ for $s\in\CC$. It follows from polynomial bounds on the number of extensions of fixed degree and of bounded discriminant (c.f. \cite{SchmidtNF} and \cite[Lemma 2.4]{Gmethodant}) that the series converges for $\Re(s)\gg 0$ and defines a holomorphic function in a complex halfplane.  One has a map $$i:BG(F)\to BG(\AAA):=\sideset{}{'}\prod_vBG(F_v),$$ where the restricted product is taken with respect to the unramified cohomology. It follows from the Poitou-Tate exact sequence that the map~$i$ is of finite kernel and of discrete, closed and cocompact image.  By applying the Poisson formula for the inclusion $i(BG(F))\subset BG(\AAA),$ the function~$Z$ is written as a finite sum of Fourier transforms of the extended height $H:BG(\AAA)\to\RR_{>0}$ at automorphic characters (i.e. the ones vanishing at~$i(BG(F))$).   
We ``approach" these Fourier transforms by $L$-functions of certain Galois representations. Namely, the representations are ``twists" by characters, of  restricted Galois representations induced from Galois submodule of~$G_*(\oF)$ where $c:G_*(\oF)\to\RR_{\geq 0}$ attains its minimal strictly positive value. The analytic properties of~$Z$ are now deduced from analytic properties of $L$-functions. 
%
\subsection{Acknowledgements} This work was supported by JSPS KAKENHI Grant Number JP18H01112. This work has been done during a post-doctoral stay of the first named author at Osaka University. During the stay, he  was supported by JSPS Postdoctoral Fellowship for Research in Japan. The authors would like to thank Antoine Chambert-Loir, Daniel Loughran and Julian Demeio for useful comments. 
\subsection{Conventions} We will use notation~$F$ for a global field. We denote by~$M_F$ (respectively,~$M_F^0$ and~$M_F^\infty$) the set of its places (respectively, of finite places and of infinite places). 

We fix algebraic closures of~$F$ and of~$F_v$ for $\vMF$ and embeddings of the algebraic closure of~$F$ in each of the algebraic closures of~$F_v$. We denote by~$\overline F$ and for $\vMF$ by $\overline{F_v}$ the separable closure of~$F$ in and~$F_v$ in the chosen algebraic closures.

If~$v$ is a finite place of~$F$, we denote by~$\Ov$ the ring of integer of~$F_v$, by~$\kappa_v$ the residue field at~$v$ and by~$q_v$ its cardinality.  In the function field case, we write~$q$ for the cardinality of the field of constants of~$F$. The notation~$\Gamma_F$ and~$\Gamma_v$ will be used to denote the absolute Galois group of~$F$ and~$F_v$, respectively.

Let $f, g:\RR_{\geq 0}\to\RR_{\geq 0}$ be two functions, such that for $B\gg 0$ one has that $g(B)\neq 0$. We write $f\sim_{B\to\infty}g$ if $\lim_{B\to\infty}\frac{f(B)}{g(B)}=1$. We write $f\asymp_{B\to\infty}g$ if there are constants $C_1, C_2>0$ such that for every~$B$ big enough one has that $ C_1 g(B)\leq f(B)\leq C_2 g(B)$.
\section{Heights} \label{sectionheights} In this section we will define heights that we will be using to count elements of~$BG(F)$.
\subsection{Cohomologies}\label{equivalenceofcat} We start by defining cohomological groups that we work with. 
\subsubsection{}Let~$M$ be a profinite group and let~$J$ be a finite $M$-group (that is, a finite group endowed with a continuous $M$-action). We denote by $H^1(M, J)$ the pointed set given by the corresponding non-abelian degree~$1$ cohomology \cite[Chapter I, Section 5.2]{Cohomologiegalois}. It is the set of finite sets endowed with compatible~$M$ and~$J$ actions modulo $M$-equivariant bijections such that the map $$J\times A\xrightarrow{(g,x)\mapsto (g\cdot x,x)} A\times A$$ is a bijection. The distinguished element of~$H^1(M,J)$ is given by the class of~$J$, with the action being given by the left multiplication. 
The pointed set $H^1(M, J)$ also allows the following description: it is the pointed set \begin{align*}Z^1(M,J):=\{f:M\to J\text{ continuous}|\forall\gamma,\delta\in M:f(\gamma\delta)=(f(\gamma))(\gamma\cdot f(\delta))\}
\end{align*} modulo the equivalence relation $$f\sim f'\iff \exists g\in J: \forall \gamma\in M: f'(\gamma)=g^{-1}f(\gamma)(\gamma\cdot g).$$
In the case~$J$ is commutative, one has that $Z^1(M,J)$ and $H^1(M, J)$ have structures of abelian groups.
\subsubsection{}Let~$S$ be a locally Noetherian connected scheme and let~$s$ be a geometric point of~$S$. Let $\et(S)$ be the category having finite \' etale $S$-schemes for its objects and \' etale morphisms of $S$-schemes for its morphisms. We let $\pi_1(S, s)$ be the fundamental group of~$S$ at~$s$ and let $\pi_1(S,s)-set$ be the category of {\it finite} sets endowed with a continuous $\pi_1(S,s)$-action. By \cite[\' Expos\' e V, Section 7]{SGAj}, one has an equivalence $$F_S:\et (S)\to \pi_1(S,s)-set.$$ 
Let~$S'$ be another locally Noetherian connected scheme endowed with a morphism $f:S'\to S$, and~$s'$ its geometric point such that $f(s')=~s$. We have a continuous homomorphism $\pi_1(S', s')\to \pi_1(S,s)$. Let us denote by~$F_S^{-1}$ a quasi-inverse of~$F_S$ and by $f^*:\et(S)\to\et(S')$ the functor given by the base change to~$S'$. The functor $$F_S\circ f^*\circ F_S^{-1}:\pi_1(S,s)-set\to\pi_1(S', s')-set$$ is isomorphic to the functor given by the the homomorphism $\pi_1(S', s')\to \pi_1(S,s)$. 

Let~$G$-be a finite \' etale group scheme over~$S$. We note a canonical identification $$G(S)=F_S(G)^{\pi^1(S,s)}\hspace{2cm} (=:H^0(S, G)).$$ A (left) $G$-torsor over~$S$ is a scheme~$P$ endowed with an {\it fppf} morphism to~$S$ and  with a (left) $G$-action such that the morphism $G\times _SP\to P\times _SP$ given by $(g, p)\mapsto (g\cdot p, p)$ is an isomorphism. A morphism of $G$-torsors $P\to P'$ is a $G$-equivariant morphism $P\to P'$. The trivial~$G$-torsor is $P=G$ endowed with the action given by the left multiplication.
\begin{mydef}
We denote by $BG(S)$ the pointed set of isomorphism classes of $G$-torsors. If $S=\Spec(A)$ for a ring~$A$, we use also notation $BG(A)$.
\end{mydef} 
 As~$G$ is assumed \' etale, by \cite[Chapter III, Proposition 4.2]{Milne}, one has for a $G$-torsor~$P$ that $P\in\et (S)$. Obviously, any \' etale morphism is automatically {\it fppf}, thus~$P\in\et(S)$ is a $G$-torsor if and only if $G\times_S P\to P\times_S P$ is an isomorphism, and using the equivalence~$F_S$, if and only if $F_S(G)\times F_S(P)\to F_S(P)\times F_S(P)$ is an isomorphism in $\pi_1(S,s)-set$. We have hence established a canonical identification of~$BG(S)$ with the set $H^1(\pi_1(S,s), F_S(G))$. 
\subsubsection{} We apply results of the previous paragraph. Let~$F$ be a global field. Let $S=\Spec(A)$, where $$A\in\{F\}\cup\cup_{\vMFz}\{\Ov\}\cup\cup_{\vMF}\{F_v\}.$$ 
By~$s$, we will denote each time a geometric generic point of a scheme~$S$. Note that for every \' etale morphism $S'\to S,$ every point in the geometric fiber over~$s$ is defined over the separable closure of the function field of~$S$, so that $F_{S}(S')$ is given by the points in the separable closure of the function field.

By \cite[\' Expos\' e V, Proposition 8.1]{SGAj}, one has an identification $\pi^1(\Spec(F), s)=\Gamma_F$ and for finite places~$v$, an identification $\pi^1(\Spec(F_v), s)=~\Gamma_v$. For the last identification, by \cite[\' Expos\' e V, Proposition 8.2]{SGAj}, the group $\pi^1(\Spec(\Ov), s)$ is the quotient group of $\pi^1(\Spec(\Fv), s)$ corresponding to the quotient group~$\Gamma_v^{\un}$ of~$\Gamma_v$. 

Let~$G$ be a finite \' etale $S$-group scheme. 
When $S=\Spec(\Ov)$, we obtain that the map of pointed sets 
$$BG(\Ov)\to BG(F_v)$$ identifies with the unramified cohomology $$H^1(\Gamma_v^{\un}, G(\overline {F_v}))\hookrightarrow H^1(\Gamma_{F_v}, G(\overline {F_v})).$$ (The last map is an inclusion by \cite[Chapter I, \S 5.8 (a) and Chapter I \S 2.6 (b)]{Cohomologiegalois}.) When $S=\Spec(F)$, for every place~$v$, the inclusion $\overline F\hookrightarrow\overline F_v$, yields a map $$BG(F)\to BG(F_v)$$ which corresponds to the map on cohomologies induced by the inclusion $\Gamma_v\subset\Gamma_F$. 
\subsection{Definition of $G_*$}For an \' etale $F$-group scheme~$G$, we are going to define a pointed \' etale scheme~$G_*$. If~$G$ is commutative, then~$G_*$ has a structure of a commutative group scheme. When~$G$ is a constant group scheme, the $\overline F$-points of~$G_*$ correspond to $F$-conjugacy classes from \cite{Malle}. 
\subsubsection{}

Let $M$ be a profinite group, let $N$ be a profinite $M$-group and let $J$ be a finite $M$-group. 
We denote by $\Hom(N,J)$ the group of (automatically continuous) homomorphisms $N\to J$. The group $\Hom(N,J)$ is endowed with a canonical structure of an $M$-group:
$$\gamma\cdot f:=\gamma\circ f\circ\gamma^{-1}.$$
The group $\Hom(N,J)$ is also endowed with a continuous action of~$J$ given by conjugation: $$j\cdot f:= (x\mapsto (j)(f(x))j^{-1})\hspace{2cm}j\in J, f\in\Hom(N,J).$$
Note that: 
\begin{align*}\gamma\cdot (j\cdot f)&=\gamma \cdot (x\mapsto (j)(f(x))j^{-1})\\&=\big(x\mapsto\gamma\big((j)(f(\gamma^{-1}\cdot x))j^{-1}\big)\big)\\
&=\big(x\mapsto\gamma(j)\gamma(f(\gamma^{-1}\cdot x))\gamma(j)^{-1}\big)\\
&=(\gamma\cdot j)\cdot (\gamma\cdot f),
\end{align*}hence, in particular the action of~$M$ preserves the orbits of~$J$. We deduce that there is a continuous $M$-action on $$\Conj(N,J):=\Hom(N, J)/J$$given by $$\gamma\cdot(fJ)=(\gamma\cdot f)J.$$ The set $\Conj(N,J)$ admits a distinguished element which is fixed by~$M$. The fiber of the distinguished element in $\Conj(N,J)$ is the distinguished element in $\Hom(N,J)$. If~$J$ is assumed commutative, one has that~$\Hom(N,J)=\Conj(N,J)$ admits a structure of an $M$-module. 

Note that if $N\to N'$ is a homomorphism of profinite groups, the canonical map $\Hom(N',J)\to\Hom(N,J)$ is $J$-equivariant and we have a map of pointed sets$$\Conj(N',J)\to\Conj(N, J).$$ If $N\to N'$ is a homomorphism of profinite $M$-groups, one has that the map $\Conj(N',J)\to\Conj(N, J)$ is a morphism of $M$-pointed sets. Similarly, if $J\to J'$ is a homomorphism of finite groups, the canonical map $\Hom(N,J)\to\Hom(N, J')$ is $(J\to J')$-equivariant and we have a map of pointed sets $$\Conj(N, J)\to\Conj(N, J').$$ If $J\to J'$ is a homomorphism of finite $M$-groups, the map $\Conj(N, J)\to\Conj(N, J')$ is a morphism of $M$-pointed sets. 
\subsubsection{}\label{paragtwist} Let now~$M$ be a profinite group and let~$J$ be a finite $M$-group. Let $\sigma\in Z^1(M, J)$. We denote by~$\prescript{}{\sigma}J$ the twist of~$J$ by~$\sigma$ \cite[Section 5.4, Example 2]{Cohomologiegalois}; it is the abelian group~$J$ endowed with with the $\Gamma_F$-action $$\gamma\cdot_{\sigma} f:=(\sigma(\gamma))(\gamma(f))(\sigma(\gamma)^{-1}).$$ 
By \cite[Chapter II, Proposition 35 bis]{Cohomologiegalois}, one has a pointed bijection $$\Lambda_{\sigma}:Z^1(M, \sJ)\to Z^1(M, J),\hspace{1cm}\beta\mapsto \beta\cdot\sigma,$$ which induces a pointed bijection $$\lambda_{\sigma}:H^1(M, \sJ)\to H^1(M, J).$$
Let us recall several properties of twists.
\begin{lem}\label{listproptwist} Let~$\sigma\in Z^1(M,J)$ and let~$\tau\in Z^1(M,\prescript{}{\sigma}J)$.
\begin{enumerate}
\item One has that $\prescript{}{\tau}(\prescript{}{\sigma}J)=\prescript{}{\Lambda_{\sigma}(\tau)}(J).$
\item Let $\sigma'$ be cohomologous to~$\sigma$ and let $k\in J$ be such that $$\sigma'(\gamma)=(k^{-1})(\sigma(\gamma))(\gamma(k))$$ for every $\gamma\in M$. The isomorphism of abelian groups $$\prescript{}{\sigma'}G\to\prescript{}{\sigma} G\hspace{1cm}g\mapsto kgk^{-1}$$is $M$-equivariant (i.e. is an isomorphism of $M$-groups).
\item One has that $$ \Lambda_{\Lambda_{\sigma}(\tau)}=\Lambda_{\sigma}\circ\Lambda_{\tau}$$ and that  $$ \lambda_{\Lambda_{\sigma}(\tau)}=\lambda_{\sigma}\circ\lambda_{\tau}.$$
\item Suppose that $f:N\to M$ is a continuous homomorphism of topological groups. The diagrams  
\[\begin{tikzcd}
	{Z^1(M, \sJ)} & {Z^1(N,\sJ)=Z^1(N, \prescript{}{\sigma\circ f}J)} \\
	{Z^1(M, J)} & {Z^1(N, J)}
	\arrow[from=1-1, to=1-2]
	\arrow["{\Lambda_{\sigma}}"', from=1-1, to=2-1]
	\arrow["{}"', from=2-1, to=2-2]
	\arrow["{\Lambda_{\sigma\circ f}}", from=1-2, to=2-2]
\end{tikzcd}\]
and 
\[\begin{tikzcd}
	{H^1(M, \sJ)} & {H^1(N,\sJ)=H^1(N, \prescript{}{\sigma\circ f}J)} \\
	{H^1(M, J)} & {H^1(N, J)}
	\arrow[from=1-1, to=1-2]
	\arrow["{\lambda_{\sigma}}"', from=1-1, to=2-1]
	\arrow["{}"', from=2-1, to=2-2]
	\arrow["{\lambda_{\sigma\circ f}}", from=1-2, to=2-2]
\end{tikzcd}\]
are commutative.
\item Let~$K$ be another $M$-group and let $h:J\to K$ be an $M$-equivariant homomorphism. The canonical homomorphism of finite groups $\sJ=J\xrightarrow{h} K=\prescript{}{h\circ\sigma}K$ is $M$-equivariant. The diagrams
\[\begin{tikzcd}
	{Z^1(M, \sJ)} & {Z^1(M,\prescript{}{h\circ\sigma}K)} \\
	{Z^1(M,J)} & {Z^1(M,K)}
	\arrow[from=1-1, to=1-2]
	\arrow["{\Lambda_{\sigma}}"', from=1-1, to=2-1]
	\arrow[from=2-1, to=2-2]
	\arrow["{\Lambda_{h\circ\sigma}}", from=1-2, to=2-2]
\end{tikzcd}\]
and
\[\begin{tikzcd}
	{H^1(M, \sJ)} & {H^1(M,\prescript{}{h\circ\sigma}K)} \\
	{H^1(M,J)} & {H^1(M,K)}
	\arrow[from=1-1, to=1-2]
	\arrow["{\lambda_{\sigma}}"', from=1-1, to=2-1]
	\arrow[from=2-1, to=2-2]
	\arrow["{\lambda_{h\circ\sigma}}", from=1-2, to=2-2]
\end{tikzcd}\]
are commutative.
\end{enumerate}
\end{lem}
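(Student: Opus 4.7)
The plan is to verify each item by direct computation from the definitions recalled just before the lemma, namely $\gamma \cdot_\sigma g = \sigma(\gamma)\gamma(g)\sigma(\gamma)^{-1}$ and $\Lambda_\sigma(\beta)(\gamma) = \beta(\gamma)\sigma(\gamma)$. I would first establish each identity at the level of $1$-cocycles; the corresponding statement for $\lambda_\sigma$ then follows by passing to cohomology classes, using that $\Lambda_\sigma$ descends to the pointed bijection $\lambda_\sigma$ by \cite[Chapter I, Section 5.4]{Cohomologiegalois}.

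For (1), I would expand both sides on a generic $g \in J$. In $\prescript{}{\tau}(\prescript{}{\sigma}J)$ one has $\gamma \cdot_\tau g = \tau(\gamma)(\gamma \cdot_\sigma g)\tau(\gamma)^{-1} = \tau(\gamma)\sigma(\gamma)\gamma(g)\sigma(\gamma)^{-1}\tau(\gamma)^{-1}$, while in $\prescript{}{\Lambda_\sigma(\tau)}J$ the action is conjugation by $\Lambda_\sigma(\tau)(\gamma) = \tau(\gamma)\sigma(\gamma)$, which yields the same expression. For (2), I would substitute $\sigma'(\gamma) = k^{-1}\sigma(\gamma)\gamma(k)$ into $k(\gamma \cdot_{\sigma'} g)k^{-1}$ and cancel $k \cdot k^{-1}$ at both ends; what remains is $\sigma(\gamma)\gamma(kgk^{-1})\sigma(\gamma)^{-1} = \gamma \cdot_\sigma (kgk^{-1})$, so $g \mapsto kgk^{-1}$ intertwines the two $M$-actions.

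For (3), at the cocycle level associativity of multiplication in $J$ gives the chain $\Lambda_{\Lambda_\sigma(\tau)}(\rho) = \rho \cdot (\tau \cdot \sigma) = (\rho \cdot \tau) \cdot \sigma = \Lambda_\sigma(\rho \cdot \tau) = \Lambda_\sigma(\Lambda_\tau(\rho))$, after identifying $\prescript{}{\tau}(\sJ) = \prescript{}{\Lambda_\sigma(\tau)}J$ via (1); passing to $H^1$ yields $\lambda_{\Lambda_\sigma(\tau)} = \lambda_\sigma \circ \lambda_\tau$. For (4), I would check commutativity on $\beta \in Z^1(M, \sJ)$: both compositions produce the cocycle $\gamma \mapsto \beta(f(\gamma))\sigma(f(\gamma))$ in $Z^1(N, J)$. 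For (5), one first observes that $h \colon \sJ \to \prescript{}{h \circ \sigma}K$ is $M$-equivariant, via the one-line calculation $h(\sigma(\gamma)\gamma(g)\sigma(\gamma)^{-1}) = h(\sigma(\gamma))\gamma(h(g))h(\sigma(\gamma))^{-1}$ (using $M$-equivariance of the original $h$); both compositions then send $\beta$ to the cocycle $\gamma \mapsto h(\beta(\gamma))h(\sigma(\gamma)) = h(\beta(\gamma)\sigma(\gamma))$.

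The main difficulty is not mathematical but notational: one has to keep careful track of whether a given group is carrying its original $M$-action or a twisted one, and in which cohomology set each cocycle lives. Once the cocycle-level equalities are written out, descent to $H^1$ is automatic from the standard fact that twisting is compatible with the coboundary equivalence.
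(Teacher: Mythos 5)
Your proposal is correct and follows essentially the same route as the paper: verify each identity by direct computation at the cocycle level using $\gamma\cdot_\sigma g=\sigma(\gamma)\gamma(g)\sigma(\gamma)^{-1}$ and $\Lambda_\sigma(\beta)=\beta\cdot\sigma$, then descend to $H^1$. All five item-by-item calculations match the paper's.
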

\begin{proof}
\begin{enumerate}
\item  Let $\gamma\in M$ and let $g\in \prescript{}{\tau}(\prescript{}{\sigma}J)=J$. For the canonical action of~$M$ on~$\prescript{}{\tau}(\prescript{}{\sigma}J),$ we have that 
\begin{align*}\gamma\cdot g&=(\tau(\gamma)) (\gamma\cdot_{\sigma}g)(\tau(\gamma))^{-1}\\&=(\tau(\gamma)) \cdot (\sigma(\gamma))\cdot (\gamma(g))\cdot(\sigma(g))^{-1}) \cdot (\tau(\gamma))^{-1}\\
&=(\Lambda_{\sigma}(\tau)(\gamma))(\gamma(g))(\Lambda_{\sigma}(\tau)(\gamma))^{-1},
\end{align*}
which is precisely the element $\gamma\cdot g$ for the canonical action of~$M$ on~$\prescript{}{\Lambda_{\sigma}(\tau)}(J)$. 
\item Let $\gamma\in M$ and let $g\in G$. We have that\begin{align*}(k)(\gamma\cdot_{\sigma'})(k^{-1})\hskip-2cm&\\&=(k)\big(\sigma'(\gamma)\big)(\gamma(g))\big(\sigma'(\gamma))^{-1}\big)(k^{-1})\\
&=(k)\bigg((k^{-1})(\sigma(\gamma))(\gamma(k))\bigg)(\gamma(g))\bigg((\gamma(k)^{-1})(\sigma(\gamma))^{-1}(k)\bigg)(k^{-1})\\
&=(\sigma(\gamma))(\gamma(k))(\gamma(g))(\gamma(k^{-1}))(\sigma(\gamma))^{-1}\\
&=\gamma\cdot_{\sigma}(kgk^{-1}),
\end{align*}
and the claim follows.
\item Let $\theta\in Z^1(M,\prescript{}{\tau}(\prescript{}{\sigma}J))$. We have that $$\Lambda_{\Lambda_{\sigma}(\tau)}(\theta)=\theta\cdot \Lambda_{\sigma}(\tau)=\theta\cdot \tau\cdot\sigma\in Z^1(M, J).$$ On the other side $$\Lambda_{\sigma}(\Lambda_{\tau}(\theta))=\Lambda_{\sigma}(\theta\cdot\tau)=\theta\cdot\tau\cdot\sigma.$$ The first claim follows. The second claim follows immediately from the first claim.
\item The first diagram is commutative, because the image of $\theta\in Z^1(M, \sJ)$ under the composite of left vertical and upper horizontal maps is $\Lambda_{\sigma\circ f}(\theta\circ f)=(\theta\circ f)\cdot (\sigma\circ f),$ which is precisely $(\theta\cdot \sigma)\circ f=\Lambda_{\sigma}(\theta)\circ f.$ The commutativity of the second diagram follows immediately from the commutativity of the first diagram.
\item Let $j\in J$ and let $\gamma\in M$. We have that 
\begin{align*}
h(\gamma\cdot_{\sigma}j)&=h\big((\sigma(\gamma))\cdot(\gamma(j))\cdot(\sigma(\gamma))^{-1}\big)\\
&=h(\sigma(\gamma))\cdot h(\gamma(j))\cdot (h(\sigma(\gamma)))^{-1}\\
&=((h\circ\sigma)(\gamma))\cdot\gamma(f(j))\cdot((h\circ\sigma)(\gamma))^{-1}\\
&=\gamma\cdot_{h\circ\sigma} f(j),
\end{align*}
and the first claim follows. If $\theta\in Z^1(M, \sJ)$, its image under the composite of the left vertical and upper horizontal maps is given by $\Lambda_{h\circ\sigma}(h\circ \theta)=(h\circ\theta)\cdot (h\circ \sigma).$ This is equal to $h\circ (\theta\cdot\sigma)=h\circ(\Lambda_{\sigma}(\theta))$ and the commutativity of the first diagram follows. The commutativity of the second diagram is now immediate.
\end{enumerate}
\end{proof}
\subsubsection{}We now turn to~$\Gamma_F$-groups. 
When $k\geq 1$ is an integer, we denote by~$\muu_k$ the group of~$k$-th roots of unity in~$\overline{F}$. It has a structure of~$\Gamma_F$-module: $$\Gamma_F\times\muu_k\to\muu_k\hspace{1cm}(\gamma,\xi)\mapsto \xi^{\chi_{\cycl}(\gamma)},$$where $\chi_{\cycl}:\Gamma_F\to\prod_{\ell\neq \charr(F)}\ZZ_{\ell}^{\times}$ is the cyclotomic character \cite[Definition 7.3.6]{CohomologyNF}. Let~$J$ be a finite $\Gamma_F$-group.
\begin{mydef}
Let $e=e(J)$ be the exponent of~$J$. 
We set $$J_*:=\Conj(\muu_{e},J)$$ and we endow it with the continuous $\Gamma_F$-action as above. If~$J$ is abelian, then~$J_*$ is understood to be endowed with a structure of a commutative $\Gamma_F$-module.
\end{mydef}
\begin{rem}
\normalfont
When~$J$ is abelian, the $\Gamma_F$-module~$J_*$ is precisely the Tate twist $J(-1)$ \cite[Section 7.8]{cohomologicalinvariants}.
\end{rem}
\begin{lem}\label{slabalema}
\begin{enumerate}
\item Suppose that $J$ is endowed with a trivial~$\Gamma_F$-action.  Let $\Conj_F(J)$ be the set of $F$-conjugacy classes of~$J$ (\cite[Section 2]{Malle})\footnote{In \cite{Malle}, Malle works only for the case~$F$ is a number field. The definitions and results that we use, are generalized to the case of a global field in an evident way.}. A choice of be a primitive $e=e(J)$-th root of unity $\xi$ in $\overline F$ induces a map of pointed sets $$\lambda_{\xi}:\Hom(\muu_e, J)\to J\to\Conj_F(J),\hspace{1cm} f\mapsto [f(\xi)]$$ where $[g]$ denotes the $F$-conjugacy class of~$g$. The map $\lambda_{\xi}$ is surjective, of trivial kernel, $J$-invariant, $\Gamma_F$-invariant and induces a bijection $$J_*/\Gamma_F\xrightarrow{\sim}\Conj_F(J).$$ If~$r\geq 1$ is an integer such that $\gcd(r,e)=1,$ then for $y\in \Hom(\muu_e, J)$ one has that $\lambda_{\xi}(y)=\lambda_{\xi^r}(y)^r,$ where the map of pointed sets $\Conj_F(J)\to\Conj_F(J)$ denoted by $x\mapsto x^r$ is the map induced from $J$-equivariant and $\Gamma_F$-equivariant automorphism $J\to J$ given by $j\mapsto j^r$.  
\item Suppose that $J=~\muu_k$ for some integer~$k\geq 1$. The~$\Gamma_F$-action on~$(\muu_k)_*$ is trivial.
\end{enumerate}
\end{lem}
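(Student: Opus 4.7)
The plan is to reduce the entire lemma to elementary statements about $J$ itself by exploiting the bijection
\[
\Hom(\muu_e,J)\xrightarrow{\sim}J,\qquad f\mapsto f(\xi),
\]
which is available because $J$ has trivial $\Gamma_F$-action and exponent $e$, so any element of $J$ is the image of a generator. First I would transport the various actions across this bijection. The $J$-action by conjugation on $\Hom(\muu_e,J)$ clearly matches ordinary conjugation on $J$. The $\Gamma_F$-action needs a bit more care: since $\muu_e$ is twisted by the cyclotomic character while $J$ is not, one checks $(\gamma\cdot f)(\xi)=f(\gamma^{-1}\xi)=f(\xi)^{\chi_{\cycl}(\gamma)^{-1}}$, so that the transported action on $J$ is the power map $g\mapsto g^{\chi_{\cycl}(\gamma)^{-1}}$.

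With this dictionary in hand, $\lambda_\xi$ factors as $\Hom(\muu_e,J)\xrightarrow{\sim}J\twoheadrightarrow\Conj_F(J)$, so surjectivity and triviality of the kernel are immediate. The $J$-invariance is automatic because $F$-conjugacy classes refine ordinary conjugacy classes. The key point is $\Gamma_F$-invariance, which reduces to the claim $g\sim_F g^{\chi_{\cycl}(\gamma)^{-1}}$; this is exactly Malle's definition of $F$-conjugacy, under which $g\sim_F h$ whenever $g^r$ is $J$-conjugate to $h$ for some $r$ in the image of $\chi_{\cycl}\colon\Gal(F(\muu_{|g|})/F)\to(\ZZ/|g|\ZZ)^\times$. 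The same comparison then yields the bijection $J_*/\Gamma_F\xrightarrow{\sim}\Conj_F(J)$, since both sides are the orbits on $J$ of conjugation by $J$ together with the power action by the image of $\chi_{\cycl}$.

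The identity $\lambda_\xi(y)=\lambda_{\xi^r}(y)^r$ should then follow by a short calculation with $y(\xi^r)=y(\xi)^r$, once one notes that $j\mapsto j^r$ is $J$- and $\Gamma_F$-equivariant (it commutes with conjugation, and commutes with cyclotomic powers in the transported action) so descends to a well-defined self-map of $\Conj_F(J)$ when $\gcd(r,e)=1$. For Part~(2), one takes $J=\muu_k$ so that $e=k$ and the cyclotomic action is now present in both source and target of $\Hom(\muu_k,\muu_k)$; a homomorphism is multiplication by some $a\in\ZZ/k\ZZ$, and a direct verification shows $(\gamma\cdot f)(\xi)=\gamma(\xi^{a\chi_{\cycl}(\gamma)^{-1}})=\xi^{a}=f(\xi)$, so the two cyclotomic twists cancel and the $\Gamma_F$-action on $(\muu_k)_*$ is trivial.

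The main technical care is in matching Malle's combinatorial definition of $F$-conjugacy (originally stated for constant groups over number fields) to the cyclotomic twist on $\Hom(\muu_e,J)$, and in keeping track of $\gamma$ versus $\gamma^{-1}$ in the conjugation and twist conventions; once those are pinned down, everything else is routine unwinding of definitions.
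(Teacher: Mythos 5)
Your proof is correct and follows essentially the same route as the paper: transport everything across the isomorphism $\Hom(\muu_e,J)\cong J$, $f\mapsto f(\xi)$, compute the transported $\Gamma_F$-action as the $\chi_{\cycl}(\gamma)^{-1}$-power map, and compare with Malle's $F$-conjugacy (the paper notes this is the \emph{opposite} of Malle's action, a point you elide, but the orbits coincide so nothing is affected). The $\xi\mapsto\xi^r$ compatibility via $f(\xi^r)=f(\xi)^r$ and the cancellation argument for Part~(2) are likewise identical to the paper's.
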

\begin{proof}
\begin{enumerate}
\item Note that $f\mapsto f(\xi)$ is a group isomorphism $$\Hom(\muu_e, J)\xrightarrow{\sim} J.$$ (It is an injective group homomorphism, which is surjective, because an element $g\in J$ of order~$k$ is the image of the composite homomorphism $\muu_e\to\muu_k\to J$ which is given by $\xi\mapsto \xi^{e/k}\mapsto g$). We endow~$J$ with a $\Gamma_F$-action using the isomorphism. For this action, one has that $\gamma\cdot g=g^{\chi_{\cycl}(\gamma)^{-1}}$, thus, this action is the opposite action of the one given in \cite[Section 2]{Malle}. Hence, the~$J$-invariant, surjective map of pointed sets $J\to \Conj_F(J)$ with the trivial kernel is~$\Gamma_F$-invariant.  Let us prove the last claim.  The map $f\mapsto f(\xi^r)$ factorizes as the composite of the maps $\Hom(\muu_e,J)$ given by $f\mapsto f(\xi)$ and the map $J\to J$ given by $x\mapsto x^r$. Hence, $\lambda_{\xi^r}(f)=[f(\xi^r)]=[f(\xi)^r]=[f(\xi)]^r=\lambda_\xi(f)^r,$ as claimed
\item This is a well known fact, but we include proof for completeness. Let~$\xi$ be a primitive~$k$-th root of unity in~$\oF$. A homomorphism $f:\muu_k=\muu_{e(\muu_k)}\to \muu_k$ is determined by the image on~$\xi$. We have that $$(\gamma\cdot f)(\xi)=\gamma(f(\gamma^{-1}(\xi)))=f(\xi^{\chi_{\cycl}(\gamma)^{-1}})^{\chi_{\cycl}(\gamma)}=f(\xi),$$i.e. $\gamma\cdot f=f$.
\end{enumerate}
\end{proof}
For positive integers $d,k$ with $d|k$, one has a canonical $\Gamma_F$-equivariant surjective homomorphism $$q_{k,d}:\muu_k\to\muu_d\hspace{1cm}x\mapsto x^{k/d}.$$
\begin{lem}\label{funcinj}
Let $\iota:N\hookrightarrow J$ be an injective $\Gamma_F$-equivariant homomorphism (in particular $e(N)|e(J)$). The map $$\Hom(\muu_{e(N)}, N)\to\Hom(\muu_{e(J)},J)\hspace{1cm}f\mapsto \iota\circ f\circ q_{e(J), e(N)}$$ is an injective homomorphism which is $\iota$-equivariant and $\Gamma_F$-equivariant. The induced map of pointed sets $\iota_*:N_*\to J_*$ is $\Gamma_F$-equivariant and of trivial kernel.
\end{lem}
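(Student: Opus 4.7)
The statement breaks into several routine verifications that I would carry out in a specific order so that each subsequent claim follows from the previous ones. The map $f \mapsto \iota \circ f \circ q_{e(J),e(N)}$ is obviously well defined as a map of sets, since the composition of continuous group homomorphisms is a continuous group homomorphism. Injectivity I would establish first: the map $q_{e(J),e(N)} \colon \muu_{e(J)} \to \muu_{e(N)}$ is surjective (as it is simply raising to the power $e(J)/e(N)$), and $\iota$ is injective by hypothesis; so any equality $\iota \circ f \circ q = \iota \circ g \circ q$ cancels on the right using surjectivity of $q$ and then on the left using injectivity of $\iota$, giving $f=g$.

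Next I would verify the two equivariance properties by plugging in an arbitrary $x \in \muu_{e(J)}$ and unwinding the definitions. For $\iota$-equivariance, if $n \in N$ acts on $f$ by conjugation, then
\[
\iota\bigl((n\cdot f)(q(x))\bigr) = \iota(n)\,\iota(f(q(x)))\,\iota(n)^{-1} = \iota(n)\cdot (\iota\circ f\circ q)(x),
\]
which is exactly the action of $\iota(n) \in J$ on $\iota \circ f \circ q$. For $\Gamma_F$-equivariance, I would use that $\iota$ is $\Gamma_F$-equivariant by hypothesis and that $q_{e(J),e(N)}$ is $\Gamma_F$-equivariant (it is a canonical map of roots of unity and hence commutes with the cyclotomic action), and compute
\[
\iota\circ(\gamma\cdot f)\circ q = \iota\circ\gamma\circ f\circ\gamma^{-1}\circ q = \gamma\circ\iota\circ f\circ q\circ\gamma^{-1} = \gamma\cdot(\iota\circ f\circ q).
\]

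With these in place, the descent to the quotients $N_* \to J_*$ is automatic: $\iota$-equivariance (composed with $\iota \colon N \to J$) is exactly the $N$-equivariance needed to push the map down through the $J$-quotient, and $\Gamma_F$-equivariance then transfers to the quotient. The last item, triviality of the kernel of $\iota_*$, is the most delicate point conceptually, though still short: suppose $[\iota\circ f\circ q] = [1]$ in $J_*$, i.e.\ $\iota\circ f\circ q$ is $J$-conjugate to the trivial homomorphism $1 \colon \muu_{e(J)}\to J$. Since any conjugate of the trivial homomorphism is again the trivial homomorphism, this forces $\iota\circ f\circ q = 1$, and then the injectivity already established on the $\Hom$-level gives $f=1$, so $[f]=[1]$ in $N_*$.

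The only step where one has to be genuinely careful is the $\Gamma_F$-equivariance of $q_{e(J),e(N)}$ and the bookkeeping in the $\iota$-equivariance computation; everything else is formal. I do not anticipate a real obstacle, but if one wanted to place the main content somewhere, it is in the observation that the trivial homomorphism is the unique element of its $J$-orbit, which is what makes the kernel of $\iota_*$ reduce to the already-proven injectivity on $\Hom$.
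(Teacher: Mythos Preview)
Your proof is correct and follows essentially the same approach as the paper. The only organizational difference is that the paper factors the map through the intermediate object $\Hom(\muu_{e(J)},N)$ and checks each property on the two factors separately, whereas you compute directly with the composite $f\mapsto \iota\circ f\circ q_{e(J),e(N)}$; the content of the verifications (surjectivity of $q$, injectivity and $\Gamma_F$-equivariance of $\iota$, and the fact that the trivial homomorphism is alone in its $J$-conjugacy class) is identical.
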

\begin{proof}
The map $\Hom(\muu_{e(N)}, N)\to\Hom(\muu_{e(J)},J)$ is the composite of the injective homomorphisms \begin{equation}\label{compnj}\tag{*}\Hom(\muu_{e(N)}, N)\to\Hom(\muu_{e(J)}, N)\to\Hom(\muu_{e(J)}, J), \end{equation} hence is an injective homomorphism. The first map in (\ref{compnj}) is $N$-invariant. The image of $f\in\Hom(\muu_{e(J)},N)$ for the second map in (\ref{compnj}) is $\iota\circ f$, while for $g\in N$, the image of $g\cdot f=(x\mapsto (g)f(x)(g)^{-1})$ is given by $$\big(x\mapsto \iota((g)f(x)(g)^{-1})\big)=\iota(g)\cdot (\iota\circ f).$$ It follows that the second map in (\ref{compnj}) is $\iota$-equivariant. Hence, the canonical map $\Hom(\muu_{e(N)},N)\to\Hom(\muu_{e(J)},J)$ is $\iota$-equivariant. By the fact that $\muu_{e(J)}\to\muu_{e(N)}$ is $\Gamma_F$-equivariant, it follows that the first map in (\ref{compnj}) is $\Gamma_F$-equivariant and by the fact that $\iota$ is $\Gamma_F$-equivariant, it follows that the second map in (\ref{compnj}) is $\Gamma_F$-equivariant. Hence, $\Hom(\muu_{e(N)}, N)\to\Hom(\muu_{e(J)}, J)$ is $\Gamma_F$-equivariant. It follows that the map $\iota_*:N_*\to J_*$ is $\Gamma_F$-equivariant. The map $\Hom(\muu_{e(N)}, N)\to\Hom(\muu_{e(J)}, J)\to J_*$ is of the trivial kernel as the composite of such maps, hence is such $N_*\to J_*$. The statement is proven. 
\end{proof}
We now verify that twisting does not change~$J_*$ and some other properties. 
\begin{lem}\label{twistsandstar} Let $\sigma\in Z^1(\Gamma_F, J)$.
\begin{enumerate}
\item One has an equality of $\Gamma_F$-pointed sets $(\prescript{}{\sigma}J)_*=J_*.$
\item Let $R\subset J$ be a $\Gamma_F$-invariant subgroup. Let $\tau\in Z^1(\Gamma_F, R)$. The canonical map $R_*\to J_*$ identifies with the map $(\prescript{}{\tau}R)_*\to (\prescript{}{\tau}J)_*.$
\item Let~$\sigma'\in  Z^1(\Gamma_F, J)$ be cohomologous to~$\sigma$ and let $k\in J$ be such that for every $\gamma\in\Gamma_F$ one has that $\sigma'(\gamma)=(k^{-1})(\sigma(\gamma))(k).$ The isomorphism $$f_k:\prescript{}{\sigma'}J\to\sJ\hspace{1cm}j\mapsto kjk^{-1},$$which is $\Gamma_F$-equivariant by Part (2) of Lemma \ref{listproptwist}, induces the identity map $(f_k)_*:J_*=(\prescript{}{\sigma'}J)_*\xrightarrow{\sim} (\sJ)_*=J_*$. If~$N$ is a $\Gamma_F$-invariant subgroup of~$\prescript{}{\sigma'}J$, the diagram 
\[\begin{tikzcd}
	{N_*} \\
	{(f_k(N))_*} & {J_*}
	\arrow[from=1-1, to=2-1]
	\arrow[from=1-1, to=2-2]
	\arrow[from=2-1, to=2-2]
\end{tikzcd}\] is commutative in which the vertical map is $\Gamma_F$-equivariant bijection of pointed sets.
\end{enumerate}
\end{lem}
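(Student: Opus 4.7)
The plan is to reduce all three parts to a single observation: $J_\ast = \Hom(\muu_e, J)/J$ is a quotient by inner $J$-conjugation, and the discrepancy between the $\Gamma_F$-action on the twist $\prescript{}{\sigma}J$ and the original one on $J$ is inner conjugation by $\sigma(\gamma) \in J$. This discrepancy is killed upon passing to $J_\ast$. Beyond this observation, everything is formal unwinding of the definitions and invocation of Lemma \ref{listproptwist}.

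For Part~(1), I would first note that $\prescript{}{\sigma}J$ and $J$ have the same underlying group, so the same exponent $e$, and $\Hom(\muu_e,\prescript{}{\sigma}J) = \Hom(\muu_e,J)$ as sets with the same $J$-conjugation action; the distinguished elements likewise agree. Only the $\Gamma_F$-action must be compared. For $f \in \Hom(\muu_e,J)$, $\gamma \in \Gamma_F$ and $\xi \in \muu_e$, writing $\gamma \cdot_\sigma f$ for the $\Gamma_F$-action induced from $\prescript{}{\sigma}J$, a direct computation gives
\begin{align*}
(\gamma \cdot_{\sigma} f)(\xi)
&= \gamma \cdot_{\sigma} \bigl(f(\gamma^{-1}(\xi))\bigr) \\
&= \sigma(\gamma)\,\gamma\bigl(f(\gamma^{-1}(\xi))\bigr)\,\sigma(\gamma)^{-1} \\
&= \sigma(\gamma)\,(\gamma \cdot f)(\xi)\,\sigma(\gamma)^{-1}.
\end{align*}
Thus $\gamma \cdot_\sigma f$ and $\gamma \cdot f$ lie in the same $J$-conjugation orbit and map to the same class in $\Conj(\muu_e,J) = J_\ast$, establishing (1) as an equality of $\Gamma_F$-pointed sets.

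For Part~(2), I would observe that the map $R_\ast \to J_\ast$ of Lemma \ref{funcinj} is built from the inclusion $R \hookrightarrow J$ and the surjection $q_{e(J),e(R)}$, both of which are data of the underlying abstract groups and do not see the $\Gamma_F$-structure. Twisting by $\tau$ leaves the underlying inclusion unchanged, so the defining formula yields literally the same map on $\Hom$-sets, and hence on $\ast$-sets after the identifications supplied by (1). For Part~(3), the map $f_k : \prescript{}{\sigma'}J \to \prescript{}{\sigma}J$ acts on $\Hom(\muu_e,J)$ by $\phi \mapsto k\phi k^{-1}$, which is inner conjugation by $k \in J$; hence $(f_k)_\ast$ is the identity on $J_\ast$. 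The $\Gamma_F$-equivariance of $f_k$ is Lemma \ref{listproptwist}(2). For the triangle with $N$, the two downward arrows are induced by inclusion and the vertical arrow is induced by the restriction of $f_k$; commutativity again reduces to the triviality of $k$-conjugation modulo $J$.

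I do not expect a serious obstacle; the statement is a bookkeeping lemma. The only point requiring genuine care is keeping track of which identification from Part~(1) is invoked when chaining twists: in (2), one applies (1) to $\tau \in Z^1(\Gamma_F, R)$ and then transports along $R \hookrightarrow J$, using that $\tau$ regarded in $J$ is the image of $\tau$ in $R$; in (3), two distinct cocycles $\sigma, \sigma'$ are twisted independently. Here Lemma \ref{listproptwist}(1) guarantees that iterated twists cohere and Lemma \ref{listproptwist}(2) yields the $\Gamma_F$-equivariance of $f_k$, so no ambiguity remains.
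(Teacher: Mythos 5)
Your proof is correct and follows essentially the same route as the paper: unwind the definition of the $\Gamma_F$-action on $\Hom(\muu_e,-)$, observe that the discrepancy between the twisted and untwisted actions is inner conjugation by $\sigma(\gamma)\in J$ (resp.\ $k\in J$), which is killed upon passing to $\Conj(\muu_e,J)=J_\ast$, and note that the map $R_\ast\to J_\ast$ is defined purely in terms of the underlying groups. The paper's proof of Parts (2) and (3) is equally terse, so no gap or divergence is present.
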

\begin{proof}
\begin{enumerate}
\item Clearly, one has an equality of pointed sets $\Hom(\muu_e, \prescript{}{\sigma}J)=\Hom(\muu_e, J)$ and hence of pointed sets $(\prescript{}{\sigma}J)_*=J_*$. We prove that the Galois actions are identical. Let~$\gamma\in\Gamma_F$ and let~$j\in (\prescript{}{\sigma}J)_*$. Let~$\widetilde j\in \Hom(\muu_e, \prescript{}{\sigma}J)$ be a lift of~$j$. We have that $$\gamma\cdot_{\sigma} j=[(\sigma(\gamma))(\gamma(\widetilde j))(\sigma(\gamma))^{-1}]=[\gamma(\widetilde j)]=\gamma\cdot j,$$where 
$[-]$ stands for the equivalence class in~$(\prescript{}{\sigma}J)_*=J_*$ of an element. The statement is proven.
\item This follows from Part (1) and the fact that the canonical map $R_*\to J_*$ is defined purely in the terms of groups (and does not depend on Galois actions). 
\item Let $j\in (\prescript{}{\sigma'}J)_*=J_*$ and let $\widetilde j\in \Hom(\muu_e,J)$ be its lift. One has that~$(f_k)_*(j)$ is the image of~$(k)(\widetilde j)(k)^{-1}$ under the quotient map $\Hom(\muu_e, J)\to J_*=(\prescript{}{\sigma}J)_*$ and this equals to the image of~$\widetilde j$ under the quotient map, hence $(f_k)_*(j)=j$. Let us prove the second claim. Let $g\in N_*$ and let~$\widetilde g\in N$ be its lift. The image of~$g$ in~$J_*$ equals to the class of~$\widetilde g\in J$. On the other side, the image of~$g$ in~$(f_k(N))_*$ equals to the image of $k\widetilde gk^{-1}$ in~$f_k(N)$. The map $(f_k(N))_*\to J_*$ takes the class of $k\widetilde gk^{-1}$ to the class of~$k\widetilde gk^{-1}$ in~$N_*$, which coincides with the class of $\widetilde g$. The second claim is proven.
\end{enumerate}
\end{proof}
\subsubsection{}Let~$G$ be a finite \' etale $F$-group scheme. Set $J=G(\oF)$ and endow it with the usual $\Gamma_F$-action. The~$\Gamma_F$-pointed set~$J_*$ defines a pointed \' etale scheme~$G_*$. If~$G$ is commutative, then $G_*$ has a structure of a commutative group scheme.  If $\sigma\in Z^1(\Gamma_F, G(\oF))$, we denote by~$\sG$ the finite \' etale tame group scheme defined by the $\Gamma_F$-group $\prescript{}{\sigma}(G(\oF)).$
\subsection{Counting function}
In this paragraph we define counting functions. They are used to define heights. When~$F=\QQ$ and~$G$ is constant, the definition coincides with the one of \cite[Section 4.2]{Gmethodant}. 
\subsubsection{}In this paragraph, we will define {\it discriminant counting function}. It ``induces" discriminants of $G$-torsors as will see in Subsection~\ref{discofgtorsor}.
\begin{mydef}
Let $J$ be a non-trivial finite $\Gamma_F$-group. A counting function is $\Gamma_F$-invariant function $$c:J_*\to\RR_{\geq 0},$$ which satisfies $c(g)=0$ if and only if~$g=1_{J_*}$ is distinguished in~$J_*.$ We define $$a(c):=\big(\min_{g\in J_*-\{1_{J_*}\}}\{c(g)\}\big)^{-1}\in\RR_{>0}.$$ The function~$c$ is said to be normalized if $a(c)=1.$
\end{mydef}
For any counting function $c:J_*\to\RR_{\geq 0}$, the function 
$$J_*\to\RR_{\geq 0},\hspace{1cm} g\mapsto \frac{c(g)}{a(c)}$$ is a normalized counting function.
\begin{mydef}
For $q\in\RR_{\geq 0}$, we denote by $(J_*)_q$ the $\Gamma_F$-invariant subset of the pointed set~$J_*$ given by $(J_*)_q:=c^{-1}(q).$ We define $$b(c):=\#  \big(((J_*)_{a(c)^{-1}})/\Gamma_F\big).$$
\end{mydef}
In the rest of paragraph we present several examples. 
 We have a function 
\begin{equation}
\Hom(\muu_e,J)\to\QQ_{\geq 0},\hspace{1cm} f\mapsto [J:f(\muu_e)]\cdot (\# f(\muu_e)-1).
\label{discount}\tag{*}
\end{equation}The function is $J$-invariant and $\Gamma_F$-invariant as the orders and indices of subgroups are preserved under group automorphisms. Moreover, it takes the value~$0$ at the trivial homomorphism $\muu_e\to J$ and takes strictly positive values elsewhere.
\begin{mydef}
Let $c_{\Delta}:J_*\to\QQ_{\geq 0}$ be the counting function induced by the $J$-invariant function (\ref{discount}). We call it the discriminant counting function.
\end{mydef}
\subsubsection{} In this paragraph, we give another example of a counting function. The corresponding height is used in the statement of the Malle conjecture.

Let~$J$ be a non-trivial constant $\Gamma_F$-group. We suppose that there is a transitive embedding $\iota:J\hookrightarrow \mathfrak S_n$ for certain $n\geq 1$, where~$\mathfrak S_n$ stands for the group of permutations of the set $\{1\doots n\}$. For $g\in \Conj_F(J),$ let $\widetilde g\in J$ be a lift of $g$. We set
\begin{align*}\ind^{\iota}:\Conj_F(J)&\to\ZZ_{\geq 0}\\g&\mapsto\ind(\iota(\widetilde g)):=n-\# \{\textnormal{orbits of }\iota(\widetilde g)\textnormal{ in \{1\doots n\}}\}.
\end{align*} The definition does not depend on the choice of $\widetilde g$ and for $g\neq 1\in\Conj_F(J)$ one has $\ind(g)\neq 1$, as stated in \cite[Section 2]{Malle}. Note that if $r\geq 1$ is an integer such that $\gcd(r,e)=1$, then $\ind^{\iota} (g)=\ind^{\iota}(g^r)$. For a primitive $e=e(J)$-th root of unity~$\xi,$ we have a $\Gamma_F$-invariant surjection of pointed sets $\overline\lambda_\xi:J_*\to \Conj_F(J)$ induced from $J$-invariant and $\Gamma_F$-invariant surjection $\lambda_\xi:\Hom(\muu_e,J)\to \Conj_F(J)$ which is defined in Lemma \ref{slabalema}. Moreover, one has that $\overline\lambda_{\xi^r}=\overline\lambda_{\xi}^r$ and that $\overline{\lambda_{\xi}}$ is of trivial kernel. It follows from above that the map $$c_{{\iota}}:J_*\to\QQ_{\geq 0}\hspace{1cm}c_{{\iota}}:=\ind^{\iota}\circ\overline{\lambda}_\xi$$ is a counting function and that does not depend on the choice of $\xi$. 
It is immediate that one has that $$a(c_{{\iota}})=a'(J\leq \mathfrak S_n),$$ and $$b(c_{{\iota}})=b'(F,J\leq \mathfrak S_n),$$where $a'(-)$ and $b'(-,-)$ are the standard invariants in the Malle conjecture \cite[Section 2]{Malle}.
\subsection{Residue map} Let~$G$ be a non-trivial finite \' etale $F$-group scheme. Let~$v$ be a finite place of~$F$ such that $\Gamma_v$-group $G(\oF)=G(\overline{F_v})$ is unramified (equivalently, the $\Fv$-group scheme~$G_{\Fv}$ is the base change of an \' etale $\Ov$-group scheme) and such that $G$ is tame at~$v$ (by this we mean that $\gcd(\#G(\oF),q_v)=1$, where $q_v=~\#\kappa_v$).  We will define a map of pointed sets $\psi_v:H^1(\Gamma_v, G(\oF))\to G_*(\Fv)$. If~$G$ is commutative, the map is a group homomorphism. The maps are used to define heights in Subsection~\ref{secdefheight}.

We denote by~$\Fv^{\tame}$ 
the maximal tame extension 
contained in $\overline{F_v}$ and by $\Gamma_v^{\tame}$ its Galois group over~$F_v$. 
Let~$T_v$ be the ramification group $$T_v:=\Gal(\overline{F_v}/\Fv^{\tame})=\ker(\Gamma_v\to\Gamma_v^{\tame})$$ and $I_v$ the inertia group $$I_v:=\Gal(\overline{\Fv}/\Fv^{\un})=\ker(\Gamma_v\to\Gamma_v^{\un}).$$ Let $I_v^{\tame}$ be the tame inertia group, that is $$I_v^{\tame}:=\Gal(F_v^{\tame}/F_v^{\un})=\coker(T_v\to I_v).$$
The sets 
$H^1(\Gamma_v, G(\oF))$ are finite (by \cite[Theorem 7.5.14]{CohomologyNF}, the group~$\Gamma_v$ is a topologically finitely generated group, and thus by \cite[Chapter III, \S 4, Proposition 9]{Cohomologiegalois}, the pointed set $H^1(\Gamma_v, G(\oF))$ is finite).
\subsubsection{} We will prove and recall several auxiliary results. As~$\Gfs$ is unramified at~$v$, the group~$I_v$, and hence the group~$T_v,$ act trivially on it. We deduce, in particular, that~$\Gamma_v^{\tame}$ acts  on~$\Gfs$. We let~$p_v$ be the characteristic of the residue field~$\kappa_v$.
\begin{lem}
The inflation map $H^1(\Gamma_v^{\tame}, \Gfs)\to H^1(\Gamma_v, \Gfs)$ is a bijection of pointed sets. If $G$ is commutative, then the bijection is an isomorphism of abelian groups.
\end{lem}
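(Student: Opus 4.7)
The plan is to apply the inflation-restriction exact sequence of non-abelian cohomology to the short exact sequence of profinite groups
\[1\to T_v\to\Gamma_v\to \Gamma_v^{\tame}\to 1,\]
and then to exploit that $T_v$ is pro-$p_v$ while $\#G(\oF)$ is coprime to $p_v$ by the tameness assumption on $G$ at $v$.

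First I would recall (from \cite[Chapter I, Section 5.8]{Cohomologiegalois}) that since $T_v$ is a closed normal subgroup of $\Gamma_v$ acting trivially on $\Gfs$ (because $\Gfs$ is unramified at $v$, so even $I_v\supset T_v$ acts trivially), one has the exact sequence of pointed sets
\[1\to H^1(\Gamma_v^{\tame},\Gfs)\xrightarrow{\inf} H^1(\Gamma_v,\Gfs)\xrightarrow{\res} H^1(T_v,\Gfs).\]
Because $T_v$ acts trivially on $\Gfs$, a continuous $1$-cocycle $T_v\to \Gfs$ is nothing but a continuous homomorphism. Now $T_v$ is a pro-$p_v$ group (it is the wild inertia subgroup, cf.\ the standard structure theory of the absolute Galois group of a local field), while $\#\Gfs$ is coprime to $p_v$ by tameness. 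Any continuous homomorphism $T_v\to \Gfs$ therefore has image which is at once a quotient of a pro-$p_v$ group and a finite group of order coprime to $p_v$, and hence trivial. Thus every cocycle is trivial, and since $T_v$ acts trivially on $\Gfs$ cohomologous cocycles coincide, giving $H^1(T_v,\Gfs)=\{\ast\}$.

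This yields injectivity of the inflation map immediately. For surjectivity I would argue directly: given a continuous cocycle $c\colon\Gamma_v\to\Gfs$, its restriction $c|_{T_v}$ is a homomorphism (by triviality of the $T_v$-action), hence trivial by the previous paragraph. Since $T_v$ is normal and acts trivially, for any $\gamma\in\Gamma_v$ and $\tau\in T_v$ one has
\[c(\gamma\tau)=c(\gamma)\cdot\gamma(c(\tau))=c(\gamma)\quad\text{and}\quad c(\tau\gamma)=c(\tau)\cdot\tau(c(\gamma))=c(\gamma),\]
so $c$ is constant on cosets of $T_v$ and descends to a continuous cocycle $\Gamma_v^{\tame}\to\Gfs$ whose inflation is $c$. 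In the commutative case, all of the maps in sight are group homomorphisms, so the same argument upgrades the pointed-set bijection to an isomorphism of abelian groups.

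The only real subtlety is the non-abelian inflation-restriction sequence and the fact that $T_v$ is genuinely pro-$p_v$; once these two inputs are in place the vanishing $H^1(T_v,\Gfs)=\ast$ is essentially automatic from tameness, and injectivity plus the direct descent argument for surjectivity finish the proof with no further work.
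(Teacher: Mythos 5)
Your proof is correct and takes essentially the same route as the paper: one shows $H^1(T_v,\Gfs)=\{\ast\}$ by observing that a continuous homomorphism from the pro-$p_v$ group $T_v$ into a group of order prime to $p_v$ is trivial, and then invokes the non-abelian inflation-restriction exact sequence; the paper cites \cite[Chapter I, \S 5.8 (a)]{Cohomologiegalois} for the sequence and \cite[Proposition 7.5.1]{CohomologyNF} for $T_v$ being pro-$p_v$. One small logical correction: injectivity of inflation is a general feature of that exact sequence and does not use the vanishing of $H^1(T_v,\Gfs)$; rather, it is surjectivity that follows from the vanishing (the image of inflation equals the kernel of restriction, which is everything once the target is a singleton), so your separate hands-on descent argument for surjectivity is sound but redundant.
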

\begin{proof}We denote by~$p_v$ the characteristic of~$\kappa_v$. By using that $\gcd(\# \Gfs,p_v)=1$ and the fact that 
the group $T_v$ is a pro-$p_v$-group \cite[Proposition 7.5.1]{CohomologyNF}, we obtain that $\Hom_{\cont}(T_v,\Gfs)=\{1\}.$ 
We deduce that one has $H^1(T_v, \Gfs)=~\{1\}.$ 
Now, the inflation-restriction exact sequence \cite[Chapter I, \S 5.8 (a)]{Cohomologiegalois}, applied to the inclusion $T_v\subset~\Gamma_v,$ reads
$$1\to H^1(\Gamma_v^{\tame}, \Gfs)\to H^1(\Gamma_v, \Gfs)\to 1.$$ If $G$ is commutative, then by \cite[Chapter I, \S 2.6 (b)]{Cohomologiegalois}, the exact sequence is an exact sequence of abelian groups and the bijection is an isomorphism of abelian groups.
\end{proof}
The conjugation in~$\Gamma_v^{\tame}$ defines an action on its abelian normal subgroup~$I_v^{\tame}$ of the group~$\Gamma_v^{\un}=\Gamma_v^{\tame}/I_v^{\tame}$. If we denote by~$\sigma$ the canonical topological generator of~$\Gamma_v^{\un}$, by \cite[Page 410]{CohomologyNF}, the action is determined by $\sigma\cdot \gamma=~\gamma^{q_v},$ where $\gamma\in\Gamma_v^{\un}$. For $k\geq 1$ with $\gcd(k, q_v)=1$, there is a canonical identification of the group of $k$-th roots of unity~$\muu_k$ in~$\overline {F_v}$, with the Galois group of the unique tame totally ramified extension of degree~$k$ of~$F_v^{\un}$ (it is obtained by mapping an automorphism $\gamma$ to $\gamma(\piv^{1/k})/\piv^{1/k}$ for a choice $\piv^{1/k}$ of $k$-th root of an uniformizer~$\piv$, the result is independent of the choices). The group~$\Gamma_v^{\un}$ acts on~$\muu_k$ by $\sigma\cdot \xi=\xi^{q_v}$ and it follows that the canonical homomorphism $I_v^{\tame}\to \muu_k$ is $\Gamma_v^{\un}$-equivariant. 
  
\begin{lem}\label{indmue} The canonical map of $\Gamma_v^{\un}$-groups and $G(\oF)$-groups
\begin{equation}
\label{oglic}
\Hom(\muu_e, G(\oF))\to\Hom (I_v^{\tame}, G(\oF))
\end{equation} 
is a bijection. The induced map of $\Gamma_v^{\un}$-pointed sets $$G_*(\oF)=\Conj(\muu_e, G(\oF))\to\Conj(I_v^{\tame}, G(\oF))=H^1(I_v^{\tame}, G(\oF))$$ is a bijection. 
If~$G$ is commutative, then the bijections are isomorphisms of abelian groups.
\end{lem}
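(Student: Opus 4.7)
The strategy is to show that the surjection $I_v^{\tame}\twoheadrightarrow\muu_e$ described just before the lemma is precisely the universal quotient of $I_v^{\tame}$ in the category of finite groups of exponent dividing $e$, and then transport this through the two functors $\Hom(-,G(\oF))$ and $\Conj(-,G(\oF))$.

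\textbf{Step 1 (set-up).} Since $G$ is tame at $v$, we have $\gcd(\#G(\oF),q_v)=\gcd(e,q_v)=1$, so $p_v\nmid e$. The text furnishes a canonical $\Gamma_v^{\un}$-equivariant surjection $I_v^{\tame}\twoheadrightarrow\muu_e$ (take $k=e$ in the discussion above the lemma); composing with a homomorphism $\muu_e\to G(\oF)$ gives the map~(\ref{oglic}), which is visibly $\Gamma_v^{\un}$- and $G(\oF)$-equivariant (the target action being conjugation in both cases).

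\textbf{Step 2 (bijectivity of (\ref{oglic})).} I use the standard description $I_v^{\tame}\cong\prod_{\ell\neq p_v}\ZZ_\ell(1)$, under which the canonical map to $\muu_e$ is the quotient by $e\cdot I_v^{\tame}$. Given a continuous homomorphism $f\colon I_v^{\tame}\to G(\oF)$, the image is a closed, hence finite, subgroup of $G(\oF)$ and therefore has exponent dividing $e$. Consequently $f$ vanishes on $e\cdot I_v^{\tame}=\ker(I_v^{\tame}\to\muu_e)$, so $f$ factors uniquely through a homomorphism $\muu_e\to G(\oF)$. This produces a two-sided inverse to~(\ref{oglic}).

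\textbf{Step 3 (passage to conjugacy classes).} Bijectivity is preserved by quotienting both sides by the $G(\oF)$-action (conjugation on the target), which is compatible with (\ref{oglic}) by Step~1; this yields a bijection
\[
G_*(\oF)=\Conj(\muu_e,G(\oF))\xrightarrow{\ \sim\ }\Conj(I_v^{\tame},G(\oF)),
\]
and it is $\Gamma_v^{\un}$-equivariant because (\ref{oglic}) is. Since $G$ is unramified at $v$, the group $I_v^{\tame}$ acts trivially on $G(\oF)$; for such a trivial action the non-abelian cohomology $H^1(I_v^{\tame},G(\oF))$ coincides by its cocycle description with $\Hom(I_v^{\tame},G(\oF))/G(\oF)=\Conj(I_v^{\tame},G(\oF))$, giving the stated identification. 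When $G$ is commutative the conjugation actions are trivial, $\Hom=\Conj$, and both maps are homomorphisms of abelian groups by functoriality of $\Hom(-,G(\oF))$.

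\textbf{Main obstacle.} The only substantive point is the identification $e\cdot I_v^{\tame}=\ker(I_v^{\tame}\to\muu_e)$, which relies on the product decomposition $I_v^{\tame}\cong\prod_{\ell\neq p_v}\ZZ_\ell(1)$; everything else is formal manipulation of the defining cocycle/Hom descriptions.
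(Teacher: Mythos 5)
Correct, and essentially the same approach as the paper. Both proofs hinge on the structure $I_v^{\tame}\cong\prod_{\ell\neq p_v}\ZZ_\ell$ to show that any $f\colon I_v^{\tame}\to G(\oF)$ factors through $\muu_e$; you phrase this as $f$ killing $e\cdot I_v^{\tame}=\ker(I_v^{\tame}\to\muu_e)$ because $G(\oF)$ has exponent $e$, while the paper instead routes $f$ through its image (a cyclic group of order dividing $e$) and uses the uniqueness of cyclic quotients of $I_v^{\tame}$ — the same structural input, packaged a bit more directly in your version.
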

\begin{proof}
The canonical homomorphism $I_v^{\tame}\to\muu_e$ is surjective, hence, the map (\ref{oglic}) is injective. Let us prove that a homomorphism $f:I_v^{\tame}\to G(\overline F)$ factorizes through the surjection $I_v^{\tame}\to\muu_e$.  By \cite[Page 410]{CohomologyNF}, the group~$I_v^{\tame}$ is isomorphic to the group $\prod_{\ell\neq p_v}\ZZ_{\ell}$. In particular, for every~$n$ coprime to~$p_v$, there exists up to isomorphism a unique quotient of order~$n$ of~$I_v^{\tame}$ which is moreover a cyclic group (this fact is equivalent to its more known ``dual" claim, that is, for every~$n$ coprime to~$p_v$ there exists a unique subgroup of $\oplus_{\ell\neq p_v}\QQ_{\ell}/\ZZ_{\ell}\subset\QQ/\ZZ$ of order~$n$ and it is moreover cyclic). 
The homomorphism~$f$ can thus be factorized as $$I_v^{\tame} \to \muu_{\#\Imm(f)}\xrightarrow{\sim}\Imm(f)\hookrightarrow G(\overline F).$$ 
Now, as $\#\Imm(f)|e$, the surjection $I_v^{\tame}\to~\muu_{\#\Imm(f)}$ factorizes as $I_v^{\tame}\to\muu_e\to\muu_{\#\Imm(f)}$ and we deduce the factorization of~$f$ $$I_v^{\tame}\to\muu_{e}\to\muu_{\#\Imm(f)}\xrightarrow{\sim}\Imm(f)\hookrightarrow G(\overline F).$$
We deduce that the first map in the statement, and hence the second map in the statement, are bijections. If~$G$ is commutative, then both maps are isomorphisms of abelian groups. 
\end{proof}
\begin{lem} \label{funddefh} 
The sequence of pointed sets $$0\to H^1(\Gamma_v^{\un}, \Gfs)\to H^1(\Gamma_v, \Gfs)\xrightarrow{\psi^G_v} G_*(F_v)$$ is exact, where the map $H^1(\Gamma_v^{\un}, \Gfs)\to H^1(\Gamma_v, \Gfs)$ is the inflation map (hence, by  \cite[Chapter I, Section 5.8 (a)]{Cohomologiegalois} is injective) and the map $$\psi^G_v:H^1(\Gamma_v, \Gfs)\to G_*(F_v)$$ is the restriction map $$H^1(\Gamma_v, \Gfs)=H^1(\Gamma_v^{\tame}, \Gfs)\to H^1(I_v^{\tame}, \Gfs)^{\Gamma_v^{\un}}=G_*(\Fv).$$ 
Suppose that~$G$ is commutative. Then the sequence is an exact sequence of abelian groups and the map $H^1(\Gamma_v, G(\overline F))\to G_*(F_v)$ is surjective.
\end{lem}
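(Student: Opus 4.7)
The plan is to apply the (nonabelian) inflation--restriction exact sequence of pointed sets associated with the short exact sequence of profinite groups
$$1 \to I_v^{\tame} \to \Gamma_v^{\tame} \to \Gamma_v^{\un} \to 1.$$
Since $G$ is unramified at $v$, the group $I_v$, and hence its quotient $I_v^{\tame}$, acts trivially on $\Gfs$, so $\Gfs^{I_v^{\tame}} = \Gfs$. Appealing to \cite[Chapter I, \S 5.8 (a)]{Cohomologiegalois}, the sequence
$$0 \to H^1(\Gamma_v^{\un}, \Gfs) \to H^1(\Gamma_v^{\tame}, \Gfs) \to H^1(I_v^{\tame}, \Gfs)^{\Gamma_v^{\un}}$$
is exact as a sequence of pointed sets, the left-hand map being inflation and the right-hand map being restriction.

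Next I would identify the middle and right-hand terms with the objects in the statement. The preceding lemma (asserting that inflation $H^1(\Gamma_v^{\tame}, \Gfs) \to H^1(\Gamma_v, \Gfs)$ is a bijection) allows me to replace the middle term by $H^1(\Gamma_v, \Gfs)$; Lemma \ref{indmue} furnishes a $\Gamma_v^{\un}$-equivariant bijection $G_*(\oF) = H^1(I_v^{\tame}, \Gfs)$, so taking $\Gamma_v^{\un}$-invariants on the right produces $G_*(F_v)$. The resulting exact sequence is the one in the statement; note that the composed first map $H^1(\Gamma_v^{\un}, \Gfs) \to H^1(\Gamma_v, \Gfs)$ is indeed the inflation for the quotient $\Gamma_v \twoheadrightarrow \Gamma_v^{\un}$, as it factors as two inflations through $H^1(\Gamma_v^{\tame}, \Gfs)$.

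Now suppose $G$ is commutative. By \cite[Chapter I, \S 2.6 (b)]{Cohomologiegalois}, everything becomes an exact sequence of abelian groups. To upgrade the exact sequence with the surjectivity of $\psi_v^G$, the plan is to prolong the Hochschild--Serre five-term exact sequence by one more term, giving
$$H^1(\Gamma_v^{\tame}, \Gfs) \xrightarrow{\psi_v^G} H^1(I_v^{\tame}, \Gfs)^{\Gamma_v^{\un}} \to H^2(\Gamma_v^{\un}, \Gfs).$$
The group $\Gamma_v^{\un}$ is topologically isomorphic to $\widehat{\ZZ}$, which has cohomological dimension $1$ on finite torsion modules, so the rightmost group vanishes and $\psi_v^G$ is surjective.

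I do not expect any substantial obstacle: the argument is essentially the standard inflation--restriction machine combined with the identification $H^1(I_v^{\tame}, \Gfs) = G_*(\oF)$ obtained in Lemma \ref{indmue}. The only point requiring attention is the compatibility of the identifications with the $\Gamma_v^{\un}$-action (so that ``taking invariants'' commutes with them) and the verification that the composite of the two inflations indeed gives the one written in the statement; both follow from functoriality of inflation and restriction, together with the $\Gamma_v^{\un}$-equivariance asserted in Lemma \ref{indmue}.
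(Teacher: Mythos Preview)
Your proposal is correct and follows essentially the same approach as the paper: both apply the inflation--restriction exact sequence for the extension $1\to I_v^{\tame}\to\Gamma_v^{\tame}\to\Gamma_v^{\un}\to 1$ (the paper phrases this slightly imprecisely as ``the inclusion $\Gamma_v^{\un}\subset\Gamma_v$''), and in the commutative case both extend to the five-term sequence and use the vanishing of $H^2(\Gamma_v^{\un},G(\overline F))$ (you via $\mathrm{cd}(\widehat{\ZZ})=1$, the paper via a reference to \cite[Page~387]{CohomologyNF}). Your write-up is in fact more careful than the paper's about the identifications of the middle and right-hand terms.
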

\begin{proof}
The sequence is the inflation-restriction exact sequence \cite[Chapter I, \S 5.8 (a)]{Cohomologiegalois}, applied to the inclusion $\Gamma_v^{\un}\subset~\Gamma_v$ and the first claim follows. When~$G$ is commutative, 
by \cite[Chapter I, \S 2.6 (b)]{Cohomologiegalois}, the sequence is an exact sequence of abelian groups and there is an exact sequence $$H^1(\Gamma_v, G(\overline F))=H^1(\Gamma_v^{\tame}, G(\overline F))\to G_*(F_v)\to H^2(\Gamma_v^{\un}, G(\overline F)).$$ By \cite[Page 387]{CohomologyNF}, one has that $H^2(\Gamma_v^{\un}, G(\overline F))=0.$ The second claim follows. 
\end{proof}
\begin{mydef}
The map $\psi^G_v:H^1(\Gamma_v, G(\oF))\to G_*(\oF)$ provided by Lemma \ref{funddefh} will be called the residue map (at the place~$v$).
\end{mydef}
We have the following version of compatibility with closed immersions.
\begin{lem}\label{residueandsubgroup}
Let~$\iota:R\hookrightarrow G$ be a closed subgroup (in particular, $R(\oF)$ is unramified and tame at~$v$). The diagram
\[\begin{tikzcd}
	{H^1(\Gamma^{\tame}_v,R(\oF))} & {H^1(\Gamma^{\tame}_v, G(\oF))} \\
	{R_*(\oF)} & {G_*(\oF)}
	\arrow[from=1-1, to=1-2]
	\arrow["{\psi^R_v}"', from=1-1, to=2-1]
	\arrow["{\iota_*}"', from=2-1, to=2-2]
	\arrow["{\psi^G_v}", from=1-2, to=2-2]
\end{tikzcd}\]
is commutative.
\end{lem}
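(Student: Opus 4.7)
The plan is to factor the residue map $\psi^G_v$ as a composite of two maps, each of which is natural with respect to the inclusion $\iota$, and then paste the resulting commutative squares.

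Unwinding the definition from Lemma \ref{funddefh} together with the identification in Lemma \ref{indmue}, the map $\psi^G_v$ decomposes as
$$H^1(\Gamma_v^{\tame}, G(\oF)) \xrightarrow{\mathrm{res}} H^1(I_v^{\tame}, G(\oF)) = \Conj(I_v^{\tame}, G(\oF)) \xleftarrow{\sim} \Conj(\muu_{e(G)}, G(\oF)) = G_*(\oF),$$
where the last bijection is induced by the canonical surjection $I_v^{\tame} \to \muu_{e(G)}$. I would establish the commutativity of the two squares separately.

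First, the restriction map in non-abelian cohomology is functorial in the coefficient group: for the $\Gamma_v^{\tame}$-equivariant homomorphism $\iota: R(\oF) \to G(\oF)$, both squares
$$\begin{CD} H^1(\Gamma_v^{\tame}, R(\oF)) @>>> H^1(\Gamma_v^{\tame}, G(\oF)) \\ @V\mathrm{res}VV @VV\mathrm{res}V \\ H^1(I_v^{\tame}, R(\oF)) @>>> H^1(I_v^{\tame}, G(\oF)) \end{CD}$$
commute, as one sees immediately on the level of cocycles $f \mapsto \iota\circ f$.

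Second, I need to verify that the identification of Lemma \ref{indmue} is compatible with $\iota_*$, i.e.\ that the square
$$\begin{CD} R_*(\oF) = \Conj(\muu_{e(R)}, R(\oF)) @>\iota_*>> \Conj(\muu_{e(G)}, G(\oF)) = G_*(\oF) \\ @VVV @VVV \\ \Conj(I_v^{\tame}, R(\oF)) @>>> \Conj(I_v^{\tame}, G(\oF)) \end{CD}$$
commutes, where the vertical maps are the bijections of Lemma \ref{indmue}. By Lemma \ref{funcinj}, $\iota_*$ sends the class of $f: \muu_{e(R)} \to R(\oF)$ to the class of $\iota \circ f \circ q_{e(G), e(R)}: \muu_{e(G)} \to G(\oF)$. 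Going around both ways, the commutativity reduces to the commutativity of the triangle
$$\begin{CD} I_v^{\tame} @>>> \muu_{e(G)} \\ @VVV @VVq_{e(G),e(R)}V \\ \muu_{e(R)} @= \muu_{e(R)} \end{CD}$$
where the horizontal and vertical maps are the canonical surjections. This triangle commutes because there is at most one cyclic quotient of $I_v^{\tame}$ of a given order coprime to $p_v$ (as already used in the proof of Lemma \ref{indmue}), so the canonical map $I_v^{\tame} \to \muu_{e(R)}$ factors uniquely through $\muu_{e(G)}$ via $q_{e(G),e(R)}$.

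Pasting the two commutative squares yields the commutativity of the diagram in the statement. The only non-routine step is the second square: the main subtlety is that $e(R)$ may properly divide $e(G)$, so one has to invoke the definition of $\iota_*$ from Lemma \ref{funcinj} carefully and exploit the uniqueness of cyclic tame quotients of $I_v^{\tame}$ to identify the two candidate surjections $I_v^{\tame} \to \muu_{e(R)}$.
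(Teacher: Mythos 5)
Your proof is correct and, up to repackaging, takes the same route as the paper: the paper also factors $\psi_v$ through restriction to the inertia subgroup, checking commutativity at the cocycle level via the square $Z^1(\Gamma_v^{\tame},\,\cdot\,)\to\Hom(I_v^{\tame},\,\cdot\,)$ for $f\mapsto\iota\circ f$, and then passes to $H^1$ and finally to $\,\cdot\,_*(\oF)$. You make one thing more explicit than the paper does, namely the compatibility of the identification of Lemma \ref{indmue} with the map $\iota_*$ of Lemma \ref{funcinj} when $e(R)$ divides $e(G)$ strictly; the paper treats this silently when it reads the bottom map of its second diagram as the restriction of $\iota_*$.

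One small imprecision in your justification of the final triangle: uniqueness of the cyclic quotient of $I_v^{\tame}$ of a given order, as an abstract group, does not by itself determine the quotient \emph{map}, since two surjections $I_v^{\tame}\to\muu_{e(R)}$ can differ by an automorphism of $\muu_{e(R)}$. What actually makes the triangle commute is the explicit description of the canonical map $I_v^{\tame}\to\muu_k$ recalled just before Lemma \ref{indmue}, namely $\gamma\mapsto\gamma(\piv^{1/k})/\piv^{1/k}$; choosing $\piv^{1/e(R)}=(\piv^{1/e(G)})^{e(G)/e(R)}$ shows at once that $q_{e(G),e(R)}$ precomposed with $I_v^{\tame}\to\muu_{e(G)}$ is the canonical surjection $I_v^{\tame}\to\muu_{e(R)}$. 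This is a local patch, not a structural flaw, and the rest of your argument is sound.
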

\begin{proof}
The diagram 
\[\begin{tikzcd}
	{Z^1(\Gamma_v^{\tame}, R(\oF))} & {Z^1(\Gamma_v^{\tame},G(\oF))} \\
	{\Hom(I_v^{\tame}, R(\oF))} & {\Hom(I_v^{\tame}, G(\oF))}
	\arrow[from=2-1, to=2-2]
	\arrow[from=1-2, to=2-2]
	\arrow[from=1-1, to=1-2]
	\arrow[from=1-1, to=2-1]
\end{tikzcd}\]
is commutative, hence is such the diagram
\[\begin{tikzcd}
	{H^1(\Gamma^{\tame}_v,R(\overline F))} & {H^1(\Gamma^{\tame}_v, G(\overline F))} \\
	{H^1(I_v^{\tame}, R(\oF))^{\Gamma_v^{\un}}=R_*( F_v)} & {H^1(I_v^{\tame}, G(\oF))^{\Gamma_v^{\un}}=G_*(F_v).}
	\arrow["{}"', from=2-1, to=2-2]
	\arrow["{\psi^G_v}", from=1-2, to=2-2]
	\arrow[from=1-1, to=1-2]
	\arrow["{\psi^R_v}"', from=1-1, to=2-1]
\end{tikzcd}\]
The diagram of the lemma is commutative, because it is the composite of the latter diagram and the commutative diagram
\[\begin{tikzcd}
	{R_*(F_v)} & {G_*(F_v)} \\
	{R_*(\overline F)} & {G_*(\overline F).}
	\arrow[from=2-1, to=2-2]
	\arrow[from=1-2, to=2-2]
	\arrow[from=1-1, to=1-2]
	\arrow[from=1-1, to=2-1]
\end{tikzcd}\]
\end{proof}
We are going to prove a compatibility property with twists. 
Let $\sigma\in Z^1(\Gamma_v^{\tame}, G(\oF))$. 
\begin{lem}\label{residueandtwist}
Suppose that $[\sigma]\in H^1(\Gamma_v^{\un}, G(\oF))$. The diagram
\[\begin{tikzcd}
	{H^1(\Gamma^{\tame}_v, \sG(\oF))} && {H^1(\Gamma^{\tame}_v, G(\oF))} \\
	& {G_*(\oF)}
	\arrow["{\lambda_{\sigma}}", from=1-1, to=1-3]
	\arrow["{\psi^{G^{\sigma}}_v}"', from=1-1, to=2-2]
	\arrow["{\psi^G_v}", from=1-3, to=2-2]
\end{tikzcd}\]
is commutative.
\end{lem}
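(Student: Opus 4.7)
My plan is to reduce to the case where the cocycle $\sigma$ actually lies in $Z^1(\Gamma_v^{\un}, G(\oF))$ (inflated to $\Gamma_v^{\tame}$), after which the commutativity is a direct check on cocycles.

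For the reduction, the hypothesis $[\sigma]\in H^1(\Gamma_v^{\un}, G(\oF))$ supplies $\sigma'\in Z^1(\Gamma_v^{\un}, G(\oF))$ (inflated to $\Gamma_v^{\tame}$) and $k\in G(\oF)$ with $\sigma'(\gamma)=k^{-1}\sigma(\gamma)\gamma(k)$. By Part~(2) of Lemma~\ref{listproptwist}, conjugation $f_k\colon\prescript{}{\sigma'}G(\oF)\to\sG(\oF)$, $j\mapsto kjk^{-1}$, is a $\Gamma_v^{\tame}$-equivariant group isomorphism. The induced bijection $(f_k)_*\colon H^1(\Gamma_v^{\tame},\prescript{}{\sigma'}G(\oF))\to H^1(\Gamma_v^{\tame},\sG(\oF))$ commutes with restriction to $I_v^{\tame}$, and by Part~(3) of Lemma~\ref{twistsandstar} descends to the identity on $G_*(F_v)$; hence the residue map for $\sG$ composed with $(f_k)_*$ agrees with the residue map for $\prescript{}{\sigma'}G$. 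A short cocycle check shows $\lambda_\sigma\circ(f_k)_*=\lambda_{\sigma'}$: for $\beta\in Z^1(\Gamma_v^{\tame},\prescript{}{\sigma'}G(\oF))$, the cocycles $\gamma\mapsto k\beta(\gamma)k^{-1}\sigma(\gamma)$ and $\gamma\mapsto\beta(\gamma)\sigma'(\gamma)$ differ by the coboundary $\gamma\mapsto k\,(\cdot)\,\gamma(k)^{-1}$. Combining these compatibilities reduces the lemma for $\sigma$ to the lemma for $\sigma'$.

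In the reduced case $\sigma\in Z^1(\Gamma_v^{\un}, G(\oF))$, pick $\beta\in Z^1(\Gamma_v^{\tame},\sG(\oF))$. Because $G$ is unramified at $v$ and $\sigma(\gamma)=1$ for $\gamma\in I_v^{\tame}$, the twisted action $\gamma\cdot_\sigma j=\sigma(\gamma)\gamma(j)\sigma(\gamma)^{-1}$ is trivial on $I_v^{\tame}$, so $\beta|_{I_v^{\tame}}$ is an honest homomorphism $I_v^{\tame}\to G(\oF)=\sG(\oF)$. Moreover $(\beta\cdot\sigma)(\gamma)=\beta(\gamma)\sigma(\gamma)=\beta(\gamma)$ for $\gamma\in I_v^{\tame}$, whence $\lambda_\sigma(\beta)|_{I_v^{\tame}}=\beta|_{I_v^{\tame}}$ in $\Hom(I_v^{\tame},G(\oF))$. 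Both residue images are then obtained from this common homomorphism by quotienting by the (identical) conjugation action of $G(\oF)=\sG(\oF)$ and applying the identification $H^1(I_v^{\tame},G(\oF))^{\Gamma_v^{\un}}=G_*(F_v)\subset G_*(\oF)$ from Lemma~\ref{indmue}, which gives the desired equality.

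The main obstacle is the bookkeeping in the reduction step: one must verify that the canonical identifications $(\sG)_*=(\prescript{}{\sigma'}G)_*=G_*$ from Lemma~\ref{twistsandstar} correctly intertwine $\lambda_\sigma$, $\lambda_{\sigma'}$, $f_k$, and restriction to $I_v^{\tame}$. Once that is in place, the essential content of the lemma is simply the observation that $\sigma$ disappears upon restriction to $I_v^{\tame}$.
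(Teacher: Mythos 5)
Your proof is correct, but it takes a genuinely longer route than the paper does, and the reduction step you introduce is actually unnecessary. Here is the key observation you miss. Since $G$ is unramified at $v$, the inertia subgroup $I_v^{\tame}$ acts trivially on $G(\oF)$, so the restriction $\sigma|_{I_v^{\tame}}$ is an honest group homomorphism $I_v^{\tame}\to G(\oF)$. The hypothesis $[\sigma]\in H^1(\Gamma_v^{\un}, G(\oF))$ says, via Lemma~\ref{funddefh}, that the image of $\sigma|_{I_v^{\tame}}$ in $\Conj(I_v^{\tame},G(\oF))^{\Gamma_v^{\un}}$ is trivial, i.e.\ $\sigma|_{I_v^{\tame}}$ is $G(\oF)$-conjugate to the trivial homomorphism. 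But a homomorphism conjugate to the trivial homomorphism is itself trivial (if $g f(\gamma) g^{-1}=1$ for all $\gamma$, then $f\equiv 1$). Hence $\sigma|_{I_v^{\tame}}=1$ without changing $\sigma$, and the proof is immediate: for any lift $\widetilde\alpha\in Z^1(\Gamma_v^{\tame},\sG(\oF))$ of $\alpha$, one has $(\widetilde\alpha\cdot\sigma)|_{I_v^{\tame}}=\widetilde\alpha|_{I_v^{\tame}}\cdot\sigma|_{I_v^{\tame}}=\widetilde\alpha|_{I_v^{\tame}}$, so $\psi_v^G(\lambda_\sigma(\alpha))=\psi_v^{\sG}(\alpha)$.

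By contrast, you replace $\sigma$ with a cohomologous $\sigma'$ that is genuinely inflated from $\Gamma_v^{\un}$, then prove compatibility of $\lambda_\sigma$, $(f_k)_*$, and the residue maps across the twists via Lemma~\ref{listproptwist}(2) and Lemma~\ref{twistsandstar}(3). That bookkeeping is all correct (your cocycle check that $\lambda_\sigma\circ(f_k)_*=\lambda_{\sigma'}$ is right, and the identification $(f_k)_*=\mathrm{id}$ on $G_*$ is exactly Lemma~\ref{twistsandstar}(3)), and the final step is the same calculation. What your route buys is nothing over the paper's; it simply recovers, after several lemma applications, a fact that holds on the nose: the restriction $\sigma|_{I_v^{\tame}}$ is trivial already, not just up to coboundary.
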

\begin{proof}
The condition $[\sigma]\in H^1(\Gamma_v^{\un}, G(\oF))$ is by Lemma \ref{funddefh} equivalent to the condition that the image of $\sigma|_{I_v^{\tame}}\in \Hom(I_v^{\tame}, G(\oF))^{\Gamma_v^{\un}}$ in $G_*(\Fv)=\Conj(I_v^{\tame}, G(\oF))^{\Gamma_v^{\un}}$ is the trivial element, or in other words, $\sigma|_{I_v^{\tame}}$ is conjugate to the trivial map $I_v^{\tame}\to G(\oF)$. But this means that $\sigma|_{I_v^{\tame}}$ itself is the trivial map. Let now $\widetilde\alpha\in Z^1(\Gamma_v^{\tame}, \sG(\oF))$ be a lift of $\alpha\in H^1(\Gamma_v^{\tame}, \sG(\oF)).$ We have that \begin{align*}\psi_v^G(\lambda_{\sigma}(\alpha))=\psi_v^G([\widetilde\alpha\cdot\sigma])=(\widetilde\alpha\cdot\sigma)|_{I_v^{\tame}}&=\widetilde\alpha|_{I_v^{\tame}}\cdot\sigma|_{I_v^{\tame}}\\&=\widetilde\alpha|_{I_v^{\tame}}\\&=\psi^{\sG}_v(\alpha)
\end{align*}and the claim follows. 
\end{proof}
\subsection{Heights}\label{secdefheight} In this subsection we will define heights and provide examples.
\subsubsection{} Let~$G$ be a non-trivial finite \' etale tame $F$-group scheme.  
For every place~$v$ such that~$G(\oF)$ is unramified at~$v$ and such that $G(\oF)$ is tame at~$v$, we let $\psi^G_v:H^1(\Gamma_v, G(\oF))\to G_*(\Fv)\subset G_*(\oF)$ be the residue map. 
\begin{lem}\label{klasikatreba} Let $x\in H^1(\Gamma_F, G(\oF))$. For~$v\in M_F$, let us denote by~$x_v$ the image of$~x$ in $H^1(\Gamma_v, G(\oF))$. For almost every~$v$ such that~$G(\oF)$ is unramified at~$v$, one has that $$\psi^G_v(x_v)=1\in G_*(\oF).$$
\end{lem}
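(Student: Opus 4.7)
The plan is to reduce the statement to the observation that any cohomology class $x \in H^1(\Gamma_F, G(\overline F))$ is represented by a cocycle that becomes unramified at all but finitely many places, and then to invoke Lemma \ref{funddefh}, according to which the kernel of $\psi^G_v$ is precisely the inflation image $H^1(\Gamma_v^{\un}, G(\overline F)) \hookrightarrow H^1(\Gamma_v, G(\overline F))$.

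First I would pick a continuous cocycle $\widetilde x : \Gamma_F \to G(\overline F)$ representing $x$. Since $G(\overline F)$ is finite (and endowed with the discrete topology) and $\widetilde x$ is continuous, the preimage of every element is an open subset of $\Gamma_F$; consequently $\widetilde x$ factors through a finite quotient $\Gal(L/F)$ for some finite Galois extension $L/F$ inside $\overline F$. Simultaneously, the action of $\Gamma_F$ on $G(\overline F)$ factors through a finite Galois extension $L'/F$, which we may enlarge $L$ to contain.

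Next I would let $S$ be the finite set of places of $F$ consisting of the archimedean places together with those finite places that either ramify in $L/F$ or at which $G$ is ramified. For $v \notin S$, the restriction of $\widetilde x$ to $\Gamma_v$ factors through the unramified quotient $\Gamma_v^{\un}$, because the image of the inertia group $I_v \subset \Gamma_v$ in $\Gal(L/F)$ is trivial. Hence the local class $x_v \in H^1(\Gamma_v, G(\overline F))$ lies in the image of the inflation map $H^1(\Gamma_v^{\un}, G(\overline F)) \to H^1(\Gamma_v, G(\overline F))$.

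Finally, applying Lemma \ref{funddefh}, which identifies this inflation image with the kernel of $\psi^G_v$, yields $\psi^G_v(x_v) = 1 \in G_*(\overline F)$ for every $v \notin S$ at which $G$ is unramified. There is no real obstacle: the argument is a continuity/compactness statement about $\Gamma_F$-cocycles with finite coefficients combined with the exact sequence of Lemma \ref{funddefh}, and it is uniform in whether or not $G$ is commutative since Lemma \ref{funddefh} handles both cases.
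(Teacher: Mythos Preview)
Your proof is correct and follows essentially the same route as the paper's: both argue that $x$ comes by inflation from $H^1(\Gal(L/F),G(\overline F))$ for some finite Galois $L/F$, so that at every finite place $v$ unramified in $L$ the local class $x_v$ lands in the image of $H^1(\Gamma_v^{\un},G(\overline F))$, which is the kernel of $\psi^G_v$ by Lemma~\ref{funddefh}. The only cosmetic difference is that you work with an explicit cocycle and its factorization through a finite quotient, while the paper phrases the same step purely in terms of cohomology classes and a commutative square; you are also slightly more explicit about enlarging $L$ to contain the splitting field of the $\Gamma_F$-action on $G(\overline F)$, which the paper leaves implicit.
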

\begin{proof} By Lemma \ref{funddefh}, we need to verify that for almost all~$v$ such that~$G(\oF)$ is unramified at~$v$, the element~$x_v$ lies in the image of the map $H^1(\Gamma_v^{\un}, G(\oF))\to H^1(\Gamma_v, G(\oF))$. There exists a finite Galois extension $K/F$ such that~$x$ lies in the image of the map $H^1(\Gal(K/F), G(\oF))\to H^1(\Gamma_F, G(\oF)).$ Let~$v$ be a finite place such that~$K$ is unramified at~$v$ and let~$w$ be a place of~$K$ lying over~$v$. One has a commutative square 
\[\begin{tikzcd}
	& {H^1(\Gal(K/F), G(\oF))} & {H^1(\Gamma_F, G(\oF))} \\
	{} & {H^1(\Gal(K_w/F_v), G(\oF))} & {H^1(\Gamma_v, G(\oF)),}
	\arrow[from=1-2, to=2-2]
	\arrow[from=2-2, to=2-3]
	\arrow[from=1-3, to=2-3]
	\arrow[from=1-2, to=1-3]
\end{tikzcd}\]
hence, the element $x_v$ is in the image of $H^1(\Gal(K_w/F_v), G(\oF))$. But, the map $H^1(\Gal(K_w/F_v), G(\oF))\to H^1(\Gamma_v, G(\oF))$ factorizes through the map $$H^1(\Gal(K_w/F_v), G(\oF))\to H^1(\Gamma_v^{\un}, G(\oF)).$$ The statement follows.
\end{proof}
Let~$\Sigma_G$ be the finite set of all finite places of~$F$ for which~$G$ is ramified at~$v$ or for which $G$ is wild at~$v$ (i.e. not tame). 
\begin{mydef}\label{definitionofheight}Let $c:G_*(\overline F)\to\RR_{\geq 0}$ be a counting function. Let $M_F^{\infty}\cup \Sigma_G\subset\Sigma\subset M_F$ be a finite set of places.  For $v\in\Sigma$, we let $c_v: BG(F_v)\to\RR_{\geq 0}$ be functions and for $v\in M_F-\Sigma$ let us set $$c_v=c\circ \psi^G_v:BG(F_v)\to \RR_{\geq 0}.$$ For $v\in M_F,$ we denote by~$H_v$ the function $$H_v:BG(F_v)\to\RR_{>0}\hspace{1cm} x\mapsto q_v^{c_v(x)}.$$ The function $$H=H((c_v)_v):BG(F)\to ~\RR_{>0}\hspace{1cm}x\mapsto\prod_{\vMF}H_v(x_v),$$(the product is finite by Lemma \ref{klasikatreba}) is called the height function defined by~$(c_v)_v$ (sometimes simply the height).
\end{mydef}
The counting function~$c$ will be said to be the {\it type} of the height~$H$. For a height~$H$, we may write~$\Sigma_H$ for the set~$\Sigma$ from the definition.
\begin{lem}\label{boundba}
Let $c:G_*(\overline F)\to\QQ_{\geq 0}$ be a counting function and let~$H_1$ and~$H_2$ be two heights having~$c$ for its type. There exists $C_1, C_2>0$ such that for every $x\in BG(F)$ one has that $$C_1\leq \frac{H_1(x)}{H_2(x)}\leq C_2.$$
\end{lem}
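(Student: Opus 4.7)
The plan is to reduce the inequality to a finite product, using that the local heights $H_{1,v}$ and $H_{2,v}$ agree outside a finite set of places and that $BG(F_v)$ is a finite pointed set.

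First, I set $\Sigma := \Sigma_{H_1} \cup \Sigma_{H_2}$, a finite set of places containing $M_F^\infty \cup \Sigma_G$. By Definition \ref{definitionofheight}, for every $v \in M_F - \Sigma$ and every $x_v \in BG(F_v)$, one has
\begin{equation*}
H_{1,v}(x_v) = q_v^{c(\psi^G_v(x_v))} = H_{2,v}(x_v).
\end{equation*}
Hence, for every $x \in BG(F)$, the ratio telescopes to the finite product
\begin{equation*}
\frac{H_1(x)}{H_2(x)} = \prod_{v \in \Sigma} \frac{H_{1,v}(x_v)}{H_{2,v}(x_v)}.
\end{equation*}

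Next, for each $v \in \Sigma$, I invoke the fact recalled before Lemma \ref{indmue} that $BG(F_v) = H^1(\Gamma_v, G(\overline{F}))$ is finite. Since $H_{1,v}$ and $H_{2,v}$ take strictly positive real values on this finite set, the ratio $H_{1,v}/H_{2,v}$ takes only finitely many strictly positive values. Hence there exist constants $c_{1,v}, c_{2,v} > 0$ with $c_{1,v} \leq H_{1,v}(x_v)/H_{2,v}(x_v) \leq c_{2,v}$ for every $x_v \in BG(F_v)$. Setting $C_1 := \prod_{v \in \Sigma} c_{1,v}$ and $C_2 := \prod_{v \in \Sigma} c_{2,v}$, which are positive because $\Sigma$ is finite, yields the desired uniform bound $C_1 \leq H_1(x)/H_2(x) \leq C_2$. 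No real obstacle arises here: the essential inputs are the finiteness of $H^1(\Gamma_v, G(\overline{F}))$ and the agreement of the local heights outside the finite set $\Sigma$.
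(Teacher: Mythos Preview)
Your proof is correct and follows essentially the same approach as the paper: both arguments reduce the ratio $H_1(x)/H_2(x)$ to a finite product over the places in $\Sigma$ where the local heights may differ, and then use the finiteness of $BG(F_v)$ to bound that finite product uniformly. The only cosmetic difference is that the paper bounds the full product $\prod_{v\in\Sigma} H_{1,v}(x_v)/H_{2,v}(x_v)$ directly by its minimum over the finite set $\prod_{v\in\Sigma} BG(F_v)$, whereas you bound each local factor separately and then multiply.
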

\begin{proof} There exists a finite set of places~$\Sigma$, such that for all $v\in M_F-\Sigma$, one has that $H_{1v}=H_{2v}$. As the sets~$BG(F_v)$ are finite, it follows that $$\frac{H_1(x)}{H_2(x)}\geq\prod_{v\in \Sigma}\frac{H_{1v}(x_v)}{H_{2v}(x_v)}\geq \min_{x\in\prod_{v\in\Sigma}(BG(F_v))}\bigg\{\frac{\prod_{v\in\Sigma}H_{1v}(x_v)}{\prod_{v\in\Sigma}H_{2v}(x_v)}\bigg\}.$$ Similarly, one establishes that $\frac{H_1}{H_2}$ is bounded above.
\end{proof}
\subsubsection{}\label{pullbackcountfunc} Let~$G$ be a non-trivial finite \' etale tame group scheme over~$F$ and let~$\sigma\in Z^1(\Gamma_F, G(\oF))$. Let $c:G_*(\oF)=(\sG)_*(\oF)\to\RR_{\geq 0}$ be a counting function. 
Let $\iota:R\hookrightarrow \sG$ be a closed subgroup. By Lemma \ref{funcinj}, we have a pointed $\Gamma_F$-equivariant map $\iota_{*}:R_*(\oF)\to G_*(\oF)$ which is of trivial kernel. The map $$\iota^*c:R_*(\oF)\to\RR_{\geq 0}\hspace{1cm}x\mapsto c(\iota_*(x))$$ is thus a counting function.
\begin{lem}\label{composingheights} Let $\Sigma_G\cup\Sigma_{G^{\sigma}}\subset \Sigma\subset M_F^0$ be finite. For~$v\in M_F-\Sigma$, we choose functions $c_v:BG(F_v)\to\RR_{\geq 0}$ and set~$H_v$ and~$H$ as in  as in Definition \ref{definitionofheight}. The function $$BR(F)\xrightarrow{\iota} B(\sG)(F)\xrightarrow{\lambda_{\sigma}} BG(F)\xrightarrow{H}\RR_{>0}$$is the height $H((c_v\circ\iota)_v)$ (hence, having~$\iota^*c$ for its type).
\end{lem}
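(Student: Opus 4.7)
The plan is to verify the claimed equality of height functions on $BR(F)$ by a place-by-place comparison of their factors. Writing $\varphi:=\lambda_{\sigma}\circ\iota_{*}\colon BR(F)\to BG(F)$ for the composite denoted $\iota$ in the statement, and $\varphi_v$ for its localization, one factorizes $\varphi_v=\lambda_{\sigma}\circ\iota_{*,v}$ through $B(\sG)(F_v)$. By Definition~\ref{definitionofheight},
\begin{equation*}
H(\varphi(y))=\prod_{v\in M_F}q_v^{c_v(\varphi_v(y_v))},\qquad H((c_v\circ\iota)_v)(y)=\prod_{v\in M_F}q_v^{d_v(y_v)},
\end{equation*}
where $d_v=c_v\circ\varphi_v$ for $v\in\Sigma$ and $d_v=(\iota^{*}c)\circ\psi_v^R=c\circ\iota_{*}\circ\psi_v^R$ for $v\notin\Sigma$. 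Both products are finite by Lemma~\ref{klasikatreba}, so the problem reduces to the termwise identity $c_v\circ\varphi_v=d_v$.

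For $v\in\Sigma$ this holds by the very definition of $d_v$. For $v\in M_F-\Sigma$, the relation $c_v=c\circ\psi_v^G$ reduces the task to the commutativity
\begin{equation*}
\psi_v^G\circ\varphi_v=\iota_{*}\circ\psi_v^R
\end{equation*}
of maps $BR(F_v)\to G_{*}(\oF)$. To establish this, the plan is to insert $\psi_v^{\sG}$ as an intermediate step: Lemma~\ref{residueandtwist} furnishes $\psi_v^G\circ\lambda_{\sigma}=\psi_v^{\sG}$, while Lemma~\ref{residueandsubgroup}, applied to the closed subgroup $\iota\colon R\hookrightarrow\sG$, furnishes $\psi_v^{\sG}\circ\iota_{*,v}=\iota_{*}\circ\psi_v^R$. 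Composing along the factorization $\varphi_v=\lambda_{\sigma}\circ\iota_{*,v}$ then yields the required identity.

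The main technical step will be checking the hypothesis of Lemma~\ref{residueandtwist} at each $v\notin\Sigma$, namely $[\sigma]_v\in H^1(\Gamma_v^{\un},G(\oF))$, or equivalently (per the analysis opening the proof of that lemma) that $\sigma|_{I_v^{\tame}}$ is the trivial cocycle. This is precisely where the inclusion $\Sigma_{G^{\sigma}}\subset\Sigma$ is used: for such $v$ the twist $\sG$ is unramified and tame, which, after a possible adjustment of $\sigma$ within its cohomology class (altering $\lambda_{\sigma}$ and $\psi_v^{\sG}$ in compatible ways via Part~(2) of Lemma~\ref{listproptwist}), forces the tame inertial restriction to be trivial. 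Once this is granted, the remainder of the argument is a routine diagram chase.
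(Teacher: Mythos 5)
Your overall structure—reducing to a termwise identity of local factors for $v\notin\Sigma$ and then factoring the residue map through $\psi_v^{\sG}$ via Lemmas \ref{residueandtwist} and \ref{residueandsubgroup}—is precisely what the paper does. However, the step you single out as the ``main technical point'' is argued incorrectly, and the inclusion $\Sigma_{G^\sigma}\subset\Sigma$ does not do what you claim.

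You assert that $v\notin\Sigma_{G^\sigma}$ (i.e.\ $\sG$ unramified and tame at $v$) ``forces the tame inertial restriction to be trivial,'' after adjusting $\sigma$ within its cohomology class. This is false. When $G$ is unramified at $v$, saying $\sG$ is unramified at $v$ says only that $\sigma(\gamma)g\sigma(\gamma)^{-1}=g$ for all $g\in G(\oF)$ and all $\gamma\in I_v$, i.e.\ $\sigma(I_v)\subset Z(G(\oF))$; this does not make $\sigma|_{I_v^{\tame}}$ trivial. (For $G$ commutative, $\sG$ is always unramified wherever $G$ is, yet $\sigma$ can perfectly well be ramified.) Replacing $\sigma$ by a cohomologous $\sigma'(\gamma)=k^{-1}\sigma(\gamma)\gamma(k)$ does not help: for $\gamma\in I_v$ with $G$ unramified, $\gamma(k)=k$, so $\sigma'(\gamma)=k^{-1}\sigma(\gamma)k=\sigma(\gamma)$ when $\sigma(\gamma)$ is central. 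So the condition $[\sigma]_v\in H^1(\Gamma_v^{\un},G(\oF))$ is not a consequence of $v\notin\Sigma_{G^\sigma}$.

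The correct source of the unramifiedness of $[\sigma]_v$ is Lemma~\ref{klasikatreba}: for any class in $H^1(\Gamma_F,G(\oF))$, the localization at $v$ lies in $H^1(\Gamma_v^{\un},G(\oF))$ for all but finitely many $v$. One therefore restricts to the cofinite set of $v\notin\Sigma$ with $[\sigma]_v$ unramified (this is what the paper's proof does by writing ``Let $v\in M_F^0-(\Sigma_H\cup\Sigma_G\cup\Sigma_{G^\sigma})$ be such that $[\sigma]\in H^1(\Gamma_v^{\un},G(\oF))$''), establishes the identity $\psi_v^G\circ\lambda_\sigma\circ\iota_v=\iota_*\circ\psi_v^R$ there, and observes that agreement of the local factors at all but finitely many places is exactly what the definition of a height of given type requires. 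As for why $\Sigma_{G^\sigma}\subset\Sigma$ appears in the hypotheses, it is needed so that for $v\notin\Sigma$ the subgroup $R\subset\sG$ is unramified and tame at $v$, hence the residue map $\psi_v^R$ is defined—not for the unramifiedness of the class $[\sigma]$.
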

\begin{proof}
Let~$v\in M_F^0-(\Sigma_H\cup\Sigma_G\cup\Sigma_{G^{\sigma}})$ be such that $[\sigma]\in H^1(\Gamma_v^{\un}, G(\oF))$. Let~$x\in BR(F)$. By Lemma \ref{residueandtwist} and Lemma \ref{residueandsubgroup}, one has that \begin{align*}
H_v(\lambda_\sigma(\iota(x_v)))&=\exp\big(\log(q_v)\cdot {c(\psi^G_v(\lambda_{\sigma}(\iota(x_v))))}\big)\\&=\exp\big(\log(q_v)\cdot {c(\psi_v^{(\sG)}(\iota(x_v)))}\big)\\&=\exp\big(\log(q_v)\cdot c(\iota_*(\psi^R_v(x_v)))\big).
\end{align*}
The statement follows.
\end{proof}
\subsubsection{} In this paragraph we provide an example of a height, which appears in the Malle conjecture.
%

Let~$v$ be a finite place of~$F$. Let~$n$ be a positive integer and let~$G$ be a  non-trivial constant tame $\Fv$-group scheme (we may write~$G$ for $G(\oF)$). We suppose that~$G$ is embedded as a transitive subgroup $\iota:G\hookrightarrow \mathfrak S_n$. The embedding~$\iota$ defines an action of~$G$ on the set $\{1\doots n\}$. One has a canonical surjection $$\Hom_{\cont}(\Gamma_v, G)\to H^1(\Gamma_v, G)
.$$We may use the same notation for the torsor and the underlying algebra. For a $G$-torsor~$x$ over~$F_v$, we let $\widetilde x:\Gamma_v\to G$ be its lift. The composite homomorphism $\iota\circ {\widetilde x}$ defines a $\Gamma_v$-action on the set $\{1\doots n\}$, and hence by Subsection \ref{equivalenceofcat}, up to isomorphism, a degree~$n$ \' etale algebra~$E_x$ over~$F_v$. We set $$\Delta^{\iota}(x):=\Delta(E_x)\subset\Ov$$where~$\Delta$ denotes the discriminant ideal.
\begin{lem}
Let $x\in H^1(\Gamma_v, G(\oF))$. One has that $$\ord_v(\Delta^{\iota}(x))=c_{{\iota}}(\psi_v(x)).$$
\end{lem}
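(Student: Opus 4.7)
My plan is to decompose $E_x$ into its local factors, compute each factor's discriminant via the classical tame ramification formula, and then match the resulting orbit count against the definition of $c_{\iota}\circ\psi_v$. First, fix a continuous lift $\widetilde x:\Gamma_v\to G$ of $x$. Under the equivalence of categories recalled in Subsection~\ref{equivalenceofcat}, the finite $\Gamma_v$-set $\{1,\dots,n\}$ with the action $\iota\circ\widetilde x$ corresponds to $E_x$; its orbit decomposition $\{1,\dots,n\}=O_1\sqcup\dots\sqcup O_r$ gives a product decomposition $E_x=\prod_i L_i$ with $[L_i:F_v]=|O_i|$, each $L_i$ being the fixed field of the stabilizer of any point of $O_i$.

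Since $G(\oF)$ is tame at $v$, every $L_i/F_v$ is tamely ramified; writing $e_i$, $f_i$ for its ramification index and residue degree, the discriminant formula for tame local extensions gives $\ord_v(\Delta(L_i/F_v))=f_i(e_i-1)=|O_i|-f_i$. Using multiplicativity of discriminants across the factors of $E_x$, this yields
\[
\ord_v(\Delta^\iota(x))=\sum_{i=1}^r(|O_i|-f_i)=n-\sum_{i=1}^r f_i.
\]
For any $j\in O_i$ with $\Gamma_v$-stabilizer $H_i$, the $I_v$-orbit of $j$ has size $[I_v:I_v\cap H_i]=e_i$, so $O_i$ splits into $f_i$ orbits under $I_v$; summing, $\sum_i f_i$ is the total number of $I_v$-orbits on $\{1,\dots,n\}$. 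Because $G(\oF)$ is tame and $T_v$ is pro-$p_v$, the action $\widetilde x|_{I_v}$ factors through $I_v^{\tame}$, and since $I_v^{\tame}$ is procyclic, these $I_v$-orbits coincide with the orbits of $\iota(h)$ for any generator $h\in G$ of the finite cyclic image $\widetilde x(I_v^{\tame})\subset G$.

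Finally I would connect $h$ with $\psi_v(x)$ using Lemma~\ref{funddefh} and Lemma~\ref{indmue}. By construction $\psi_v(x)\in G_*(F_v)$ is the class of $\widetilde x|_{I_v^{\tame}}\in\Hom(I_v^{\tame},G(\oF))$, which under the $\Gamma_v^{\un}$-equivariant bijection of Lemma~\ref{indmue} and the choice of a primitive $e$-th root $\xi$ in Lemma~\ref{slabalema} becomes the $F$-conjugacy class of $g:=f(\xi)$, where $f\in\Hom(\muu_e,G)$ is the transported homomorphism. The generator $h$ chosen above then has the form $g^s$ for some integer $s$ coprime to $e$, the ambiguity coming from which topological generator of $I_v^{\tame}$ is sent to $\xi$ under the identification $I_v^{\tame}\to\muu_e$. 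Invoking the power-invariance $\ind^\iota(g)=\ind^\iota(g^s)$ recorded just after the definition of $\ind^\iota$,
\[
\sum_i f_i=\#\{\text{orbits of }\iota(h)\text{ on }\{1,\dots,n\}\}=n-\ind^\iota([g])=n-c_\iota(\psi_v(x)),
\]
and combining with the formula above yields $\ord_v(\Delta^\iota(x))=c_\iota(\psi_v(x))$. The main point requiring care is precisely this last reconciliation of choices: verifying that the generator $h$ of $\widetilde x(I_v^{\tame})$ and the element $g=f(\xi)$ differ by a power coprime to $e$, so that the power-invariance of $\ind^\iota$ legitimately matches the two orbit counts without having to track any further normalisation of uniformizer or of $\xi$.
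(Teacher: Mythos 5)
Your argument is correct and reaches the same endpoint as the paper's, but it gets there by a different decomposition. The paper first base changes the $G$-torsor along $F_v \to F_v^{\un}$, so that each $\Gamma_{F_v^{\un}}$-orbit $\sigma_j$ on $\{1,\dots,n\}$ yields a \emph{totally} ramified tame extension of $F_v^{\un}$ of degree $\#\sigma_j$, contributing $(\pi_v^{\#\sigma_j-1})$; the intersection with $\Ov$ then gives $\ord_v(\Delta^\iota(x))=n-k$ directly without ever mentioning residue degrees. You instead decompose $E_x$ over $F_v$ itself, invoke the tame formula $\ord_v(\Delta(L_i/F_v))=f_i(e_i-1)$, and then need the extra observation that each $\Gamma_v$-orbit of size $e_if_i$ breaks into exactly $f_i$ inertia orbits of size $e_i$ (which uses normality of $I_v$ in $\Gamma_v$), so that $\sum_i f_i$ recovers the total number of $I_v$-orbits. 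The paper's base change trades the bookkeeping of $e_i,f_i$ against the mild point that discriminants are preserved (as valuations) under unramified base change; your route is slightly more classical-looking but has one more layer of accounting. The final reconciliation step — replacing the generator $h$ of $\widetilde x(I_v^{\tame})$ by $g=f(\xi)$ up to a power coprime to $e$ and appealing to the power-invariance of $\ind^\iota$ — is handled identically in both: the paper also implicitly relies on $\ind^\iota(g)=\ind^\iota(g^r)$ (noted right after the definition of $\ind^\iota$ and Lemma~\ref{slabalema}) to make $c_\iota$ independent of the choice of $\xi$, which is precisely the normalisation issue you flag at the end.
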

\begin{proof}
Consider the base change $x_{\Fv^{\un}}=x\otimes _{\Fv}\Fv^{\un}$. The $G$-torsor~$x_{\Fv^{\un}}$ over~$\Fv^{\un}$ is given by the continuous homomorphism~$\widetilde x|_{\Gamma_{\Fv^{\un}}}$. By composing~$\widetilde x|_{\Gamma_{\Fv^{\un}}}$ with the inclusion~$\iota$, we obtain a continuous action of~$\Gamma_{\Fv^{\un}}$ on $\{1\doots n\}$, and the resulting $\Fv^{\un}$-algebra~$E_{x\otimes {\Fv^{\un}}}$ is isomorphic to $E_x\otimes_{\Fv}\Fv^{\un}$.  Moreover, the inclusion of rings $x\to x_{\Fv^{\un}}$ is unramified, hence, one has equality of the ideals of~$\Ov$:
$$\Delta(E_{x\otimes\Fv^{\un}})\cap\Ov=\Delta(E_x)=(\piv^{v(\Delta(E_x))})=(\piv^{v(\Delta^{\iota}(x))}).$$ 
Let $\sigma_1\doots \sigma_k$ be the orbits of~$\Gamma_{\Fv^{\un}}$ for its action on $\{1\doots n\}$. The indices of point stabilizers in~$\Gamma_{\Fv^{\un}}$ are, by point stabilizer lemma, equal to the cardinalities $\# \sigma_1, \doots \#\sigma_k$. A point stabilizer~$H_j$ in the orbit~$\sigma_j$ defines a degree $\#\sigma_j$ totally ramified extension $(\overline{F_v})^{H_j}$ of~$\Fv^{\un}$, hence its discriminant ideal is $(\piv^{\#\sigma_j -1})$. One has that $$(E_{x\otimes{\Fv^{\un}}})\cong\prod_{i=1}^k(\overline{F_v})^{H_k},$$ thus the discriminant ideal of~$E_{x\otimes{\Fv^{\un}}}$ is equal to $$(\piv^{\sum (\#\sigma_k-1)})=(\piv^{n-k}).$$ We deduce that $$v(\Delta^{\iota}(x))=n-k.$$ On the other side, the homomorphism $\widetilde x:\Gamma_v\to G$ factorizes through $\widetilde x':\Gamma_v^{\tame}\to G$ and one has that $\widetilde x'|_{I_v^{\tame}}\circ (\Gamma_{\Fv^{\un}}\to I_v^{\tame})=\widetilde x|_{\Gamma_{\Fv^{\un}}}.$ The image of $x\in H^1(\Gamma_v, G)=H^1(\Gamma_v^{\tame}, G)$ in~$G_*$ under~$\psi_v$ is the class of the homomorphism $x_e:\muu_e\to G$ for which $x_e\circ (I_v^{\tame}\to\muu_e)=\widetilde x'|_{I_v^{\tame}}$. Let~$\xi\in F_v^{\un}$ be a primitive $e$-th root of unity. The value of $c_{{\iota}}$ on the class of~$x_e$ is defined to be the index $$\ind(\iota(x_e(\xi))).$$ The cyclic group $\iota(x_e(\muu_e))=\iota (\widetilde x'(I^{\tame}_v)=\iota(\widetilde x(\Gamma_{\Fv^{\un}})$ has~$k$ orbits, thus the generator $\iota(x_e(\xi))$ is a product of~$k$ cycles. We obtain that $$\ord_v(\Delta^{\iota}(x))=n-k=\ind(\iota(x_e(\xi)))=c_{{\iota}}(\psi_v(x)),$$ as claimed.
\end{proof}
\subsubsection{}\label{discofgtorsor} We now establish that for~$G$ non-trivial finite \' etale tame group scheme over~$F$, one has that the function $$BG(F)\to\RR_{>0},\hspace{1cm}x\mapsto N(\Delta(x))$$ is a height. 

Let~$v$ be a place such that $G(\overline F)$ is unramified and tame at~$v$. Let $x\in H^1(\Gamma_v, G(\overline F))$ and let $\widetilde x:\Gamma_F\to G(\overline F)$ be a continuous crossed homomorphism lifting $x$. By the fact that $G(\overline F)$ is unramified at $v$, we deduce that the restriction $\widetilde x|_{I_v^{\tame}}$ is a homomorphism, and that the restriction $\widetilde x|_{I_v^{\tame}}$ does not depend on the choice of the lift of $\widetilde x$. We let $x_e:\muu_e\to G(\overline F)$ be  the homomorphism given by Lemma \ref{indmue}.
\begin{prop}Let $\vMFz$ be such that $G$ is unramified at $v$. For $x\in H^1(\Gamma_v, G(\oF))$, one has that $$\ord_v(\Delta(x))=c_{\Delta}(\psi_v(x)).$$
\end{prop}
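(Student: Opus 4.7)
My plan is to follow exactly the pattern of the preceding proposition about $c_\iota$, only replacing the permutation representation $\iota$ by the regular representation of $G(\oF)$ on itself.

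First, I would reduce to the base change over $F_v^{\un}$. Since $G$ is unramified at $v$, the inclusion of rings from $x$ into $x_{\Fv^{\un}}:=x\otimes_{F_v}F_v^{\un}$ is unramified, so that one has the equality of ideals of $\Ov$:
$$\Delta(E_{x\otimes F_v^{\un}})\cap\Ov=\Delta(E_x)=(\piv^{\ord_v(\Delta(x))}).$$
Over $F_v^{\un}$, the cocycle $\widetilde x|_{\Gamma_{F_v^{\un}}}$ is literally a homomorphism (the twist is trivial because $G(\oF)$ is unramified), and by the proof of Lemma \ref{indmue} it factors as
$$\Gamma_{\Fv^{\un}}\twoheadrightarrow I_v^{\tame}\twoheadrightarrow \muu_e\xrightarrow{x_e}G(\oF).$$

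Second, I would describe $E_{x\otimes F_v^{\un}}$ explicitly. As a finite $\Gamma_{F_v^{\un}}$-set with a free $G(\oF)$-action, the torsor $x_{F_v^{\un}}$ is $G(\oF)$ itself with the left multiplication action of the image $x_e(\muu_e)\subseteq G(\oF)$. Setting $m:=\#x_e(\muu_e)$, the orbits of $\Gamma_{F_v^{\un}}$ on $G(\oF)$ are precisely the right cosets of $x_e(\muu_e)$; so there are $[G(\oF):x_e(\muu_e)]$ of them, each of cardinality $m$. Each orbit corresponds to a factor of $E_{x\otimes F_v^{\un}}$ of degree $m$ over $\Fv^{\un}$, and since $G$ is tame at $v$ one has $\gcd(m,p_v)=1$, so each such factor is a tame totally ramified extension of $F_v^{\un}$ with discriminant ideal $(\piv^{m-1})$.

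Third, multiplying discriminants over the $[G(\oF):x_e(\muu_e)]$ factors yields
$$\Delta(E_{x\otimes F_v^{\un}})=\bigl(\piv^{(m-1)[G(\oF):x_e(\muu_e)]}\bigr),$$
so that $\ord_v(\Delta(x))=[G(\oF):x_e(\muu_e)]\cdot (\#x_e(\muu_e)-1)$. On the other hand, by Lemma~\ref{funddefh} and Lemma~\ref{indmue}, $\psi_v(x)$ is the class of $x_e:\muu_e\to G(\oF)$ in $G_*(\oF)$, and by definition of the discriminant counting function one has
$$c_{\Delta}(\psi_v(x))=[G(\oF):x_e(\muu_e)]\cdot (\#x_e(\muu_e)-1),$$
giving the desired equality. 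The only step requiring some care is the identification of $E_{x\otimes F_v^{\un}}$ as a product of tame totally ramified extensions of $\Fv^{\un}$, but once one recognizes that the torsor is $G(\oF)$ with the regular left action twisted by $x_e$, this reduces to the standard computation of discriminants of Kummer-type extensions used already in the preceding proof.
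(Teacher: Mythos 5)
Your proof is correct and follows essentially the same route as the paper: base-change to $F_v^{\un}$ where the cocycle becomes a homomorphism through $\muu_e$, identify the torsor as a product of copies of a degree-$\#x_e(\muu_e)$ tame totally ramified extension indexed by cosets of the image, compute the discriminant, and compare with the definition of $c_\Delta$. The only cosmetic difference is that you phrase the decomposition in terms of orbits (right cosets of $M=x_e(\muu_e)$), while the paper directly identifies $x_{F_v^{\un}}\cong L^{[G(\oF):M]}$ with $L=(F_v^{\tame})^{\ker(\widetilde x|_{I_v^{\tame}})}$; these are the same computation.
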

\begin{proof}
Let $\widetilde x:\Gamma_v^{\tame}\to G(\overline F)$ be a continuous crossed homomorphism which lifts $x\in H^1(\Gamma_v, G(\overline F))=H^1(\Gamma_v^{\tame}, G(\overline F)).$ As~$G(\overline F)$ is unramified at~$v$, for $\gamma, \gamma'\in I_v^{\tame}$, one has that $$\widetilde x(\gamma\gamma')=\widetilde x(\gamma)(\gamma\cdot \widetilde x(\gamma'))=\widetilde x(\gamma)\widetilde x(\gamma'),$$i.e. $\widetilde x|_{I_v^{\tame}}$ is a homomorphism. The group scheme $G_{\Fv^{\un}}=G\times _{\Fv}\Fv^{\un}$ is constant, and the $G_{\Fv^{\un}}$-torsor~$x_{\Fv^{\un}}$ is represented by the continuous crossed homomorphism $\widetilde x|_{I_v^{\tame}}:I_v^{\tame}\to G(\overline F)$. We let $M:=\widetilde x(I_v^{\tame}).$ Note that the extension $L:=(\Fv^{\tame})^{\ker(\widetilde x|I^{\tame}_v)}/F_v^{\un}$ is a Galois totally ramified degree $\# M$ extension, hence its discriminant ideal equal to $(\piv^{\#M -1}).$ One has that $x_{\Fv^{\un}}\cong L^{[G(\oF):M]},$ thus, the discriminant ideal of $x_{\Fv^{\un}}$ is equal to $(\piv^{(\#M-1)\cdot [G(\oF):M] }).$ The morphism $x_{\Fv^{\un}}\to x$ is unramified, hence the discriminant ideal of~$x$ is equal to $(\piv^{(\# M-1)\cdot [G(\oF):M]}),$ i.e. $\ord_v(\Delta_v(x))=(\#M-1) \cdot [G(\oF):M].$

On the other side the image of $x\in BG(F_v)=H^1(\Gamma_v^{\tame}, G(\oF))$ in $G_*(\Fv)\subset G_*(\oF)$ is the class of the homomorphism~$x_e$. By definition, the value of~$c_{\Delta}$ at the class of~$x_e$ is equal to $$(\# x_e(\muu_e)-1)\cdot [G(\oF): x_e(\muu_e)]=(\# M-1)\cdot [G(\oF):M].$$ The statement is proven.
\end{proof}
\subsubsection{}  We prove that the number of torsors of bounded height is finite. Let~$G$ be an \' etale group scheme and let $c:G_*(\oF)\to\RR_{\geq 0}$ be a counting function.
\begin{prop}\label{estimatenumber}
Let $H$ be a height having $c$ for its type.  There exists $C,m>0$, such that for every $A>0,$ one has that $$\#\{x\in BG(F)|H(x)\leq A\}\leq CA^m.$$
\end{prop}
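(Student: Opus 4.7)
The strategy is to bound the given height from below by a fixed power of a discriminant-type height, and then invoke classical polynomial bounds on the number of \'etale algebras of bounded discriminant.

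Since $G_*$ is a finite \'etale $F$-scheme, the set $G_*(\oF)$ is finite. Both the counting function $c$ and the discriminant counting function $c_\Delta$ are non-negative on $G_*(\oF)$ and vanish only at the distinguished point, hence there exists a constant $K > 0$ with $c_\Delta \leq K \cdot c$ on $G_*(\oF)$. I would then compare $H$ with the height $H_\Delta$ defined by $x \mapsto N(\Delta(x))$, which is of type $c_\Delta$ by Subsection~\ref{discofgtorsor}. For every place $v \notin \Sigma_H \cup \Sigma_G$, one has $q_v^{c_\Delta(\psi_v^G(x_v))} \leq H_v(x_v)^K$, and at the finitely many remaining places the ratio of local heights is bounded because each $BG(F_v)$ is finite. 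Invoking Lemma~\ref{boundba} to absorb the discrepancy between $H_\Delta$ and any other height of type $c_\Delta$, one deduces a global bound $N(\Delta(x)) \leq C_1 H(x)^K$ for some $C_1 > 0$ and all $x \in BG(F)$.

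It then suffices to bound $\#\{x \in BG(F) : N(\Delta(x)) \leq B\}$ polynomially in $B$. Each $G$-torsor is represented by a continuous crossed homomorphism $\widetilde x : \Gamma_F \to G(\oF)$, which cuts out a finite Galois extension $L_x / F$ of degree bounded in terms of $\# G(\oF)$. By standard ramification theory, $N(\disc(L_x/F))$ is bounded above by a fixed power of $N(\Delta(x))$. Applying the polynomial bounds of \cite{SchmidtNF} and \cite[Lemma 2.4]{Gmethodant} on the number of extensions of bounded degree and bounded discriminant norm to $L_x$, and multiplying by the (absolutely bounded) number of crossed homomorphisms $\Gal(L_x/F) \to G(\oF)$ attached to each such extension, yields the required estimate $C A^m$.

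The main technical subtlety is the last comparison between $N(\disc(L_x/F))$ and the ``torsor discriminant'' $N(\Delta(x))$: one must check that ramification of the Galois closure of $x$ is controlled by ramification of $x$ itself. This is a purely local matter, essentially contained in the analysis of Subsection~\ref{discofgtorsor}. Once that comparison is secured, the rest of the argument is routine assembly of the three bounds above.
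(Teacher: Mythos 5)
Your first step — reducing to the norm-of-discriminant height by finding a constant $K$ with $c_\Delta\leq K\cdot c$ on $G_*(\oF)$ and then using Lemma~\ref{boundba}-style absorption at finitely many places — is exactly what the paper does (the paper raises $H$ to an integer power $M$ so that $H^M$ dominates $N\circ\Delta$). Your second step, however, takes a genuinely different route. The paper exploits that a $G$-torsor $x$ over $F$ \emph{is} $\Spec$ of an \'etale $F$-algebra of degree $\#G(\oF)$, whose discriminant is \emph{literally} $\Delta(x)$; it then decomposes this algebra as a product of field extensions whose degrees form a partition of $\#G(\oF)$, and applies the Schmidt / \cite[Lemma~2.4]{Gmethodant} bounds factor by factor, so that no comparison of discriminants is needed at all. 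You instead associate to $x$ the finite Galois extension $L_x/F$ cut out by (the kernel of) a cocycle representing $x$, and must then insert the extra comparison $N(\disc(L_x/F))\ll N(\Delta(x))^{O(1)}$ before invoking the same polynomial bounds. Both approaches are correct and both end by absorbing a finite multiplicity (the paper: several torsors can have the same underlying \'etale algebra; you: several cocycles can cut out the same $L_x$), but the paper's route is shorter because the discriminant being bounded is \emph{defined} directly on the torsor, so the auxiliary discriminant-of-Galois-closure comparison — which you correctly flag as the technical subtlety, and which requires controlling wild ramification at the finitely many places dividing $\#G(\oF)$ — is simply not needed. If you keep your Galois-closure route, you should actually carry out that comparison rather than only gesturing at Subsection~\ref{discofgtorsor}, since that subsection only treats unramified tame places and does not by itself control the finitely many wild or ramified ones.
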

\begin{proof}
Let $c_{\Delta}:G_*(\overline F)\to\QQ_{\geq 0}$ be the discriminant counting function. Let $M>\min_{1\neq x\in G_*(\overline F)}\frac{c(x)}{c_\Delta(x)}$ be an integer. The function $M\cdot c$ is a counting function and let us set $$H_M:=H((M\cdot c_v)_v)=H((c_v)_v)^M.$$ By Lemma \ref{boundba}, there exists $C'>0$ such that for every $x\in BG(F)$ one has that  $$C'\cdot H_M(x)>N(\Delta(x)).$$ One obtains for~$A>0$ that
\begin{align*}\#\{x\in BG(F)|H(x)\leq A\}&=\#\{x\in BG(F)|H_M(x)\leq A^M\}\\&\leq \frac{\#\{x\in BG(F)|N(\Delta(x))\leq A^M\}}{C'}.
\end{align*}
It suffices, hence, to prove the statement for the norm of discriminant~$N\circ\Delta$. 
For a positive integer $k$, let us denote for $A>0$ by $\mathcal N^k(A)$ the number of extensions $$\mathcal N^k(A):=\#\{L/F|[L:F]=k, N(\Delta(L))<A\}.$$ By the main theorem of \cite{SchmidtNF} for the case when $F$ is a number field and by \cite[Lemma 2.4]{Gmethodant} for the case when $F$ is a function field, for every $k>0$ one has that there exists integer $r(k)>0$ such that $$\mathcal N^k(A)\leq C(k)\cdot A^{r(k)}$$ for some $C(k)>0$. Let $W(n)$ denotes the set of partitions of $n$. For a partition $\mathcal P\in W(n)$ of $n$, let us set  $$d(\mathcal P)[X]=\prod_{k\in\mathcal P}C(k)X^{r(k)}\in \RR[X].$$ To give an $F$-\' etale algebra of degree $n$ up to isomorphism amounts to give a partition $\mathcal P\in W(n)$ and for every $k\in\mathcal P$, an $F$-extension of degree $k$. The discriminant of an $F$-extension is bounded below by $1$, and thus the number of \' etale algebras of degree $n$ up to isomorphism, which are of discriminant at most $A$ (where $A>0$), is bounded above by $$\sum_{\mathcal P\in W(n)}\prod_{k\in\mathcal P}\mathcal N^k(A)\leq\sum_{\mathcal P\in W(n)}d(\mathcal P)(A).$$ In particular, one has that $$\#\{x\in BG(F)|N(\Delta(x))\leq A\}\leq d(\mathcal P)(A).$$The statement follows.
\end{proof}
\begin{rem}
\normalfont
The bounds used in the proof of Proposition~\ref{estimatenumber} are clearly not optimal. In Subsection~\ref{secequid} for~$G$ commutative, we will establish the precise asymptotic behaviour of $\#\{x\in BG(F)|H(x)\leq A\}$ when $A\to\infty$. 
\end{rem}
\subsection{Conjecture}
Let~$G$ be a non-trivial finite \' etale tame group scheme over~$F$ and let $c:G_*(\oF)\to \RR_{\geq 0}$ be a counting function. 
\subsubsection{}\label{parbreakthin} In this paragraph, we will define breaking thin maps. For $\sigma\in Z^1(\Gamma_F, G(\oF))$, let~$\sG$ be the twist of~$G$ by~$\sigma$.  For a closed subgroup $\iota:R\hookrightarrow \sG,$ we have a counting function $\iota^*c:R_*(\oF)\to \RR_{\geq 0}$ defined in Paragraph~\ref{pullbackcountfunc}. We have a canonical map $BR(F)\to B(\sG)(F)$, which when composed with the pointed bijection $\lambda_{\sigma}:B(\sG)(F)\to BG(F)$ gives a canonical map $BR(F)\to BG(F)$.
\begin{lem}\label{finitefibers}
The fibers of the map $BR(F)\to BG(F)$ have cardinality at most~$\# G(\oF)$.
\end{lem}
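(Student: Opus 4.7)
The plan is to reduce the statement to non-abelian Galois cohomology and apply Serre's twisting trick. First, since $\lambda_\sigma : B(\sG)(F) \to BG(F)$ is a pointed bijection (Paragraph~\ref{paragtwist}), the composite map in question has the same fibers as the map $\iota_{*} : BR(F) \to B(\sG)(F)$ induced by the closed immersion $\iota : R \hookrightarrow \sG$. Via the identification of Subsection~\ref{equivalenceofcat} this becomes the pushforward on non-abelian cohomology
\begin{equation*}
\iota_{*} : H^1(\Gamma_F, R(\oF)) \to H^1(\Gamma_F, \sG(\oF)),
\end{equation*}
and it suffices to bound each of its fibers by $\#\sG(\oF) = \#G(\oF)$.

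Second, I would treat the fiber over the distinguished class using the standard exact sequence of pointed sets attached to the coset space $\sG(\oF)/R(\oF)$ (\cite[Chapter~I, \S5.4, Proposition~36]{Cohomologiegalois}),
\begin{equation*}
\sG(F) \longrightarrow \bigl(\sG(\oF)/R(\oF)\bigr)^{\Gamma_F} \xrightarrow{\delta} H^1(\Gamma_F, R(\oF)) \xrightarrow{\iota_{*}} H^1(\Gamma_F, \sG(\oF)).
\end{equation*}
By exactness the fiber over the distinguished class equals $\Imm(\delta)$, whose cardinality is at most $\#\bigl(\sG(\oF)/R(\oF)\bigr) = \#\sG(\oF)/\#R(\oF) \leq \#G(\oF)$.

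Third, for an arbitrary class $[\beta] \in H^1(\Gamma_F, \sG(\oF))$ lying in the image of $\iota_{*}$ (otherwise the fiber is empty), I would apply Serre's twisting trick. Choose $\alpha \in Z^1(\Gamma_F, R(\oF))$ representing a preimage of $[\beta]$. Applying Part~(5) of Lemma~\ref{listproptwist} to the $\Gamma_F$-equivariant inclusion $\iota : R(\oF) \hookrightarrow \sG(\oF)$ yields a commutative square in which the vertical arrows are the bijections $\lambda_\alpha$ and $\lambda_{\iota\circ\alpha}$, the bottom arrow is $\iota_{*}$, and the top arrow is induced by the twisted closed immersion $\prescript{}{\alpha}R \hookrightarrow \prescript{}{\iota\circ\alpha}\sG$. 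This identifies the fiber of $\iota_{*}$ over $[\beta]$ with the fiber of the twisted map over its distinguished class. Since the underlying finite groups of $\prescript{}{\alpha}R(\oF)$ and $\prescript{}{\iota\circ\alpha}\sG(\oF)$ coincide with $R(\oF)$ and $\sG(\oF)$, applying the second step to the twisted map yields the same bound $\#G(\oF)$.

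I do not foresee any serious obstacle; the only point requiring attention is to check that the coset space $\prescript{}{\iota\circ\alpha}\sG(\oF)/\prescript{}{\alpha}R(\oF)$ is the same underlying set as $\sG(\oF)/R(\oF)$, merely with a modified $\Gamma_F$-action, so that the size estimate in the second step transfers verbatim. This is immediate from the definition of twisting recalled in Paragraph~\ref{paragtwist}.
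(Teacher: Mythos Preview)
Your proof is correct, but it takes a different route from the paper. The paper argues directly at the level of cocycles: after the same reduction to $\iota_*:BR(F)\to B(\sG)(F)$, it observes that the map $Z^1(\Gamma_F,R(\oF))\to Z^1(\Gamma_F,\sG(\oF))$ is injective (since $R(\oF)\hookrightarrow\sG(\oF)$ is), that $Z^1(\Gamma_F,R(\oF))\to BR(F)$ is surjective, and that each class in $H^1(\Gamma_F,\sG(\oF))$ has at most $\#\sG(\oF)=\#G(\oF)$ cocycle representatives (the cohomologous cocycles to $f$ being the maps $\gamma\mapsto g^{-1}f(\gamma)(\gamma\cdot g)$ for $g\in\sG(\oF)$). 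A short diagram chase then bounds every fiber of $BR(F)\to B(\sG)(F)$ by $\#G(\oF)$.

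Your argument via the exact sequence of the coset space plus twisting is the standard ``textbook'' method and in fact yields the sharper bound $\#\sG(\oF)/\#R(\oF)$ for each fiber, though you only record $\#G(\oF)$. The paper's cocycle argument is shorter and avoids invoking Proposition~36 of Serre and the twisting compatibility of Lemma~\ref{listproptwist}(5), at the cost of the cruder bound. Both approaches are perfectly adequate for the intended application, which only needs uniform finiteness of the fibers.
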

\begin{proof}
Because $\lambda_{\sigma}:B(\sG)(F)\to BG(F)$ is a bijection, we can suppose that~$\sG=G$. In the commutative diagram
\[\begin{tikzcd}
	{Z^1(\Gamma_F, R(\overline F))} & {Z^1(\Gamma_F, G(\overline F))} \\
	{BR(F)} & {BG(F),}
	\arrow[from=1-1, to=1-2]
	\arrow[from=1-2, to=2-2]
	\arrow[from=1-1, to=2-1]
	\arrow[from=2-1, to=2-2]
\end{tikzcd}\]
the upper horizontal map is injective, the left vertical map is surjective and the right horizontal map has fibers of size at most~$\# G(\oF))$ (the equivalence class of $\gamma\mapsto f(\gamma)\in Z^1(\Gamma_F, G(\oF))$ is given by maps $\gamma\mapsto g^{-1}f(\gamma)(\gamma\cdot g)$ for $g\in G(\oF)$). We deduce that the lower horizontal map has fibers of size at most~$\# G(\oF)$.
\end{proof}
Lemma \ref{finitefibers} gives that if the number of $x\in BR(F)$ such that $(H\circ (BR(F)\to BG(F))(x)\leq B$ is at least $C_1 B^{r_1}\log(B)^{r_2}$ for $B\gg 0$, where $C_1, r_1, r_2$ are positive constants, then the number of $x\in BG(F)$ such that $H(x)\leq  B$ is at least $(C_1/\# G(F)) B^{r_1}\log(B)^{r_2}$ for $B\gg 0$.
\begin{mydef}
The canonical map $$BR(F)\to B(\sG)(F)\xrightarrow{\lambda_{\sigma}}BG(F)$$ is said to be breaking thin, if one has that $$(a(\iota^*c), b(\iota^*c))> (a(c), b(c)).$$
in the lexicographic order.  
\end{mydef}
It is immediate that one always has $a(\iota^*c)\leq a(c)$, hence the condition in the definition is equivalent to the condition $a(\iota^*(c))=a(c)$ and $b(\iota^*c)>b(c)$. Let us note that if~$G$ is commutative, then the canonical map $R_*(\oF)\to (\sG)_*(\oF)=G_*(\oF)$ is an injection by Lemma~\ref{funcinj} and so $b(\iota^*c)\leq b(c)$, hence there are no breaking thin maps $BR(F)\to BG(F)$. 
\subsubsection{}We present a conjecture which predicts the number of $G$-torsors of bounded height. 
\begin{mydef} 
An element $x\in BG(F)$ is said to be secure if and only if it is not in the image of a breaking thin map. 
\end{mydef}
\begin{prop}\label{usefulcriterion}
Let $x\in BG(F)$ and let $\widetilde x\in Z^1(\Gamma_F, G(\oF))$ be a lift of~$x$. The element~$x$ is secure if and only there are no closed subgroups $\iota:R\hookrightarrow \prescript{}{\widetilde x}G$ such that $(a(\iota^*c), b(\iota^*c))> (a(c), b(c)).$ 
\end{prop}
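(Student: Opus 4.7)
The plan is to prove the equivalence directly. The easier direction is to show that the existence of such an $R\hookrightarrow \prescript{}{\widetilde x}G$ makes $x$ non-secure. For that I would simply exhibit the candidate breaking thin map: take $\sigma=\widetilde x$ so $\prescript{}{\sigma}G=\prescript{}{\widetilde x}G$, and consider the composite $BR(F)\to B(\prescript{}{\widetilde x}G)(F)\xrightarrow{\lambda_{\widetilde x}} BG(F)$. By the definition of $\Lambda_{\widetilde x}$ on cocycles as $\beta\mapsto \beta\cdot \widetilde x$, the trivial cocycle of $\prescript{}{\widetilde x}G$ is sent to $\widetilde x$, so the trivial $R$-torsor maps to $x$; by hypothesis this composite is breaking thin, whence $x$ is not secure.

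For the reverse direction, assume $x$ is not secure, so there exist $\sigma\in Z^1(\Gamma_F,G(\oF))$, a closed subgroup $\iota:R\hookrightarrow\prescript{}{\sigma}G$ with $(a(\iota^{*}c),b(\iota^{*}c))>(a(c),b(c))$, and $y\in BR(F)$ whose image in $BG(F)$ under $\lambda_\sigma\circ\iota$ is $x$. The task is to transport $R$ into a closed subgroup of $\prescript{}{\widetilde x}G$ without changing its counting invariants. First I would pick a lift $\widetilde y\in Z^{1}(\Gamma_F,R(\oF))$ of $y$; by Part (1) of Lemma~\ref{listproptwist}, $\Lambda_\sigma(\widetilde y)=\widetilde y\cdot\sigma$ is a cocycle in $Z^{1}(\Gamma_F,G(\oF))$ representing $x$, and the $\Gamma_F$-group $\prescript{}{\widetilde y\cdot\sigma}G$ coincides with $\prescript{}{\widetilde y}(\prescript{}{\sigma}G)$. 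Because $\widetilde y$ takes values in $R$ and $R\subset\prescript{}{\sigma}G$ is $\Gamma_F$-invariant, the subgroup $\prescript{}{\widetilde y}R$ is $\Gamma_F$-stable inside $\prescript{}{\widetilde y}(\prescript{}{\sigma}G)=\prescript{}{\widetilde y\cdot\sigma}G$.

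Now $\widetilde y\cdot\sigma$ and $\widetilde x$ both represent $x\in BG(F)$, so there exists $k\in G(\oF)$ with $\widetilde y\cdot\sigma=k^{-1}\cdot\widetilde x\cdot(\gamma\cdot k)$. By Part (2) of Lemma~\ref{listproptwist}, conjugation by $k$ gives a $\Gamma_F$-equivariant isomorphism $f_k\colon\prescript{}{\widetilde y\cdot\sigma}G\xrightarrow{\sim}\prescript{}{\widetilde x}G$. Setting $R':=f_k(\prescript{}{\widetilde y}R)$ produces a $\Gamma_F$-invariant subgroup of $\prescript{}{\widetilde x}G$, i.e.\ a closed subgroup $\iota':R'\hookrightarrow \prescript{}{\widetilde x}G$, which will be my candidate.

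The final and most delicate step is to check that $\iota^{*}c$ and $(\iota')^{*}c$ agree, after the natural identification of the pointed sets $R_*(\oF)$ and $R'_*(\oF)$. Here I would chain the three parts of Lemma~\ref{twistsandstar}: Part (1) identifies $(\prescript{}{\sigma}G)_*$ and $(\prescript{}{\widetilde x}G)_*$ with $G_*$ as $\Gamma_F$-pointed sets; Part (2) shows that the inclusion $R_*\hookrightarrow G_*$ appearing in $\iota^{*}c$ coincides with $(\prescript{}{\widetilde y}R)_*\hookrightarrow (\prescript{}{\widetilde y}(\prescript{}{\sigma}G))_*=G_*$; and Part (3), together with the commutative triangle stated there, identifies the latter with $R'_*\hookrightarrow (\prescript{}{\widetilde x}G)_*=G_*$ because $(f_k)_*$ is the identity. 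Combining these gives $(a(\iota^{*}c),b(\iota^{*}c))=(a((\iota')^{*}c),b((\iota')^{*}c))$, so $R'\hookrightarrow\prescript{}{\widetilde x}G$ has the required strict inequality. The only real obstacle is bookkeeping: one must carry the chosen lift $\widetilde y$, the coboundary element $k$, and the resulting identifications through the three lemmas consistently; once this is done the proposition falls out without further input.
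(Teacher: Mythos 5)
Your proof is correct and follows essentially the same approach as the paper's: in the forward direction you take $\sigma=\widetilde x$ and note that the trivial torsor maps to $x$, and in the reverse direction you lift $y$ to a cocycle $\widetilde y$, twist by it to move $R$ into a subgroup of $\prescript{}{\widetilde y\cdot\sigma}G$ using Parts (1), (2), (5) of Lemma~\ref{listproptwist}, conjugate by the coboundary element $k$ to land inside $\prescript{}{\widetilde x}G$ via Part (2) of Lemma~\ref{listproptwist}, and finally invoke all three parts of Lemma~\ref{twistsandstar} to conclude that the counting-function invariants are preserved. The paper organizes the same transport via an explicit commutative diagram built from Parts (1), (3), (4), (5) of Lemma~\ref{listproptwist}, but the content, the choice of $k$, the definition of $R'=f_k(\prescript{}{\widetilde y}R)$, and the appeal to Lemma~\ref{twistsandstar} all coincide with what you wrote.
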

\begin{proof}
If there exists such a subgroup~$R\subset \prescript{}{\widetilde x}G$, then~$x$ is the image of the element~$1\in BR(F)$  for the canonical map $BR(F)\to B(\prescript{}{\widetilde x}G)(F)\xrightarrow{\lambda_{\widetilde x}}BG(F),$ and hence~$x$ is not secure. Suppose now that~$x$ is not secure. Then there exists $\sigma\in Z^1(\Gamma_F, G(\oF)),$  a closed subgroup $\kappa:M\hookrightarrow \prescript{}{\sigma}G$ for which $(a(\kappa^*c),b(\kappa^*c))>(a(c),b(c))$ and an element $\tau\in BM(F)$ such that~$x$ is in the image of~$\tau$ for the usual map. Let~$\widetilde\tau\in Z^1(\Gamma_F, M(\oF))\subset Z^1(\Gamma_F, \prescript{}{\sigma}G(\oF))$ be a lift of~$\tau$. Part~(5) of Lemma~\ref{listproptwist} gives that the canonical map $\prescript{}{\widetilde{\tau}}\kappa:\prescript{}{\widetilde \tau}M\to \prescript{}{\widetilde \tau}(\sG)$ is a closed immersion. By using Parts (1), (3) and (4) of Lemma~\ref{listproptwist}, we obtain that the diagram
\[\begin{tikzcd}
	{B(\prescript{}{\widetilde\tau}M)(F)} & {B(\prescript{}{\widetilde\tau}(\prescript{}{\sigma}G))(F)=B(\prescript{}{\Lambda_{\sigma}(\widetilde\tau)}G)(F)} \\
	{BM(F)} & {B(\sG)(F)} \\
	& {} & {BG(F)}
	\arrow[from=1-1, to=1-2]
	\arrow["{\lambda_{\widetilde{\tau}}}"', from=1-1, to=2-1]
	\arrow[from=2-1, to=2-2]
	\arrow["{\lambda_{\widetilde\tau}}", from=1-2, to=2-2]
	\arrow["{\lambda_{\sigma}}"', from=2-2, to=3-3]
	\arrow["{\lambda_{\Lambda_{\sigma}(\widetilde\tau)}}", from=1-2, to=3-3]
\end{tikzcd}\]
is commutative. As the image of $1\in B(\prescript{}{\widetilde\tau}M)(F)$ for the left vertical map is~$\tau$, hence, one has that~$x$ is the image of~$1\in B(\prescript{}{\widetilde\tau}M)(F)$ for the map given by composing $\lambda_{\Lambda_{\sigma}(\widetilde\tau)}$ with the upper horizontal map. Now, by Lemma \ref{twistsandstar}, the canonical map $(\prescript{}{\widetilde\tau}M)_{*}\to (\prescript{}{\widetilde\tau}(\sG)_{*}$ identifies with the map $M_{*}\to (\sG)_{*}=G_{*}$, and thus $ (a(\prescript{}{\widetilde\tau}\kappa^*c),b(\prescript{}{\widetilde\tau}\kappa^*c))>(a(c),b(c)).$ The image of $1\in B(\prescript{}{\Lambda_{\sigma}(\widetilde\tau)}G)(F)$ for the map $\lambda_{\Lambda_{\sigma}(\widetilde\tau)}$ is~$x$, it follows that the cocycles $\Lambda_{\sigma}(\widetilde\tau)$ and~$\widetilde x$ are cohomologous. We let~$k\in G(\oF)$ be such that $(\Lambda_{\sigma}(\widetilde\tau))(\gamma)=(k^{-1})(\widetilde x(k))(\gamma(k))$ for every~$\gamma\in \Gamma_F$. We let~$\iota:R\hookrightarrow \prescript{}{\widetilde x}G$ be the closed subscheme corresponding to the $\Gamma_F$-group which is the image of~$\prescript{}{\widetilde\tau}M(\oF)$ for the map $\prescript{}{\Lambda_{\sigma}(\widetilde\tau)}G(\oF)\to\prescript{}{\widetilde x}G(\oF)$ given by $g\mapsto kgk^{-1}.$ It follows from Lemma \ref{twistsandstar} and from above that $$(a(\iota^*c),b(\iota^*c))=(a(\prescript{}{\widetilde\tau}\kappa^*c),b(\prescript{}{\widetilde\tau}\kappa^*c))>(a(c),b(c)).$$The statement is proven. 
\end{proof}
\begin{conj}\label{malletale} 
Let $H$ be a height on $BG(F)$ having~$c$ for its type.
\begin{enumerate}
\item ($F$ is a number field) There exists $A>0$ such that 
\begin{align*}
\sum_{\substack{x\in BG(F)\text{ is secure}\\H(x)\leq B}}\frac{1}{\#\Aut(x)}\sim_{B\to \infty}A\cdot B^{a(c)}\log(B)^{b(c)-1}.
\end{align*}
\item ($F$ is a global field) One has that $$\#\{x\in BG(F)|\text{$x$ is secure, }H(x)\leq B\}\asymp_{B\to \infty} B^{a(c)}\log(B)^{b(c)}.$$
\end{enumerate}
\end{conj}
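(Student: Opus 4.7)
The plan is to concentrate on the commutative case, where Paragraph \ref{parbreakthin} already shows that there are no breaking thin maps, so every $x \in BG(F)$ is secure and the conjecture reduces to the asymptotic count of $G$-torsors weighted by $1/\#\Aut(x) = 1/\#G(F)$. A standard Wiener-Ikehara Tauberian theorem (number-field case) and a Tauberian theorem for rational functions in $q^{-s}$ (function-field case) translate the problem into analytic statements about the height zeta function
$$Z(s) := \frac{1}{\#G(F)}\sum_{x \in BG(F)} H(x)^{-s}, \qquad s \in \CC;$$
what one wants is holomorphy in a right half-plane (provided by Proposition \ref{estimatenumber}), a meromorphic extension to a neighbourhood of $\Re(s) = a(c)$, and a pole of order exactly $b(c)$ there with explicit leading coefficient.

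The global-to-local passage uses the adelic space $BG(\AAA) := \sideset{}{'}\prod_v BG(F_v)$, defined as the restricted product with respect to the unramified subgroups $H^1(\Gamma_v^{\un}, G(\oF))$; Lemma \ref{klasikatreba} then lets one define an adelic height extending the global $H$. By the Poitou-Tate nine-term exact sequence, the diagonal embedding $i \colon BG(F) \to BG(\AAA)$ has finite kernel and discrete, closed, cocompact image, and the Pontryagin dual of $BG(\AAA)/i(BG(F))$ is controlled by $H^1(\Gamma_F, G^*(\oF))$ modulo a copy of $\Sh^2(G)$. The Poisson summation formula then rewrites
$$Z(s) = \frac{\#\Sh^2(G)}{\#G^*(F)} \sum_{\chi} \widehat{H}(\chi; s),$$
the sum running over characters of $BG(\AAA)$ trivial on $i(BG(F))$, with $\widehat{H}(\chi; s) = \prod_v \widehat{H_v}(\chi_v; s)$ an Euler product.

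The analytic core is to compare each $\widehat{H}(\chi; s)$ with a Hecke/Artin $L$-function. For $v \notin \Sigma_H \cup \Sigma_G$, expanding $q_v^{-s\, c(g)}$ and isolating the contribution of $g \in G_*(\oF)$ with $c(g) = 1/a(c)$ identifies $\widehat{H_v}(\chi_v; s)$, up to a factor absolutely convergent strictly to the left of $\Re(s) = a(c)$, with the local Euler factor at $v$ of $L(s,\, \chi \otimes V)$, where $V$ is the Galois permutation representation on the subset of $G_*(\oF)$ where $c$ attains its minimal positive value $1/a(c)$. Brauer induction together with the meromorphic continuation of Hecke $L$-functions (respectively their rationality in $q^{-s}$ in positive characteristic) then give the required meromorphic continuation: the trivial character contributes the full pole of order $b(c)$ at $s = a(c)$, while nontrivial characters contribute poles of strictly smaller order. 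Summing these contributions yields both the asymptotic, with leading constant $\#\Sh^2(G)\, \omega_H(\prod_v BG(F_v)) / (\#G^*(F)(b(c)-1)!)$, and, by inserting characteristic functions of opens $U$ before Poisson summation, the equidistribution statement of Theorem \ref{equintro}.

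The main obstacle, as flagged in Paragraph \ref{detailscommutative}, is establishing the meromorphic continuation of $\widehat{H}(\chi; s)$ strictly to the left of $\Re(s) = a(c)$ for every nontrivial automorphic character $\chi$, with bounds uniform enough to sum over $\chi$. The naive local expansion produces a Hecke $L$-factor only up to an auxiliary Euler product whose convergence in a slightly larger half-plane must be established; this is precisely the step where the Alberts-O'Dorney argument \cite{odorney} appears to leave a gap that could be filled via the methods of \cite{prescnorms} and \cite{statfirstg}, but our proof will instead give a self-contained argument. Once this analytic input is in place, the remaining ingredients — Poisson summation, identification of the leading constant as a Tamagawa-type volume $\omega_H$, and the Tauberian passage from $Z(s)$ to the counting function — are essentially formal.
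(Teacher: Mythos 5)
Your overall strategy matches the paper's: restrict to the commutative case where every element is secure and $\#\Aut(x)=\#G(F)$, pass to the adelic group $BG(\AAA)$, apply Poisson summation to the height zeta function, compare the local Fourier transforms with $L$-factors of Galois representations supported on the minimal-cost stratum $G_*(\oF)_1$, establish meromorphic continuation past $\Re(s)=a(c)$ via Brauer's theorem (Hecke/Artin $L$-functions), and finish with Tauberian theorems (number-field and function-field variants). Two points of the proposal, however, deserve attention.

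First, a minor imprecision: the representation attached to a character $\chi$ is not literally a tensor product $\chi\otimes V$ of an abelian character with the permutation representation on $G_*(\oF)_1$. In the paper the matrix $\rho^W_{\widetilde\chi}(\gamma)$ is a generalized permutation matrix whose nonzero entry in row $\phi$ is $(\widetilde\chi(\gamma))(\phi)$, which \emph{depends on} $\phi$. It is a monomial (twisted permutation) representation, not a scalar twist of $V$. This does not affect the Brauer-theorem argument, but one must be careful when decomposing it or computing the dimension of the invariant subspace.

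Second, and more seriously, you assert that ``the trivial character contributes the full pole of order $b(c)$ at $s=a(c)$, while nontrivial characters contribute poles of strictly smaller order.'' This is not true in general, and indeed nothing in the local analysis forces it. The order of the pole of $\widehat H(s,\chi)$ is controlled by $\dim(\CC\cdot W)^{\rho^W_\chi}$ for $W=G_*(\oF)_1$, and a nontrivial automorphic $\chi\in\Xi_H^\perp$ can perfectly well give $\rho^W_\chi$ with full invariants (for instance, the nonzero entries $\widetilde\chi(\gamma)(\phi)$ may all collapse to $1$ because $\phi$ has small order in $G_*(\oF)$ even though $\widetilde\chi$ itself is nontrivial). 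Hence the naive conclusion that the trivial character alone produces the leading singularity --- and that the leading constant is therefore automatically positive --- has a gap. The paper avoids this entirely: after writing $Z(s)=\tau_{BG}^{-1}\sum_{\chi\in\Xi_H^\perp}\widehat H(s,\chi)$, it applies Poisson summation a \emph{second} time, in the other direction, to recombine the finite sum over $\Xi_H^\perp$ into $\frac{\#\Xi_H^\perp}{\tau_{BG}}\int_{\Xi_H}H^{-s}\,\mu$. Positivity of $\lim_{s\to 1^+}(s-1)^{b(c)}\int_{\Xi_H}H^{-s}\,\mu$ is then shown directly (this is the content of Lemma~\ref{intfinind} and its use in Theorem~\ref{residueofz}), bypassing any claim about the individual pole orders at nontrivial characters. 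Without some such device, your argument does not establish that the coefficient of $(s-1)^{-b(c)}$ is nonzero, which is precisely the heart of the conjecture.

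Finally, you should make explicit that the sum over automorphic characters reduces to the finite group $\Xi_H^\perp$ (because $\widehat H(s,\chi)=0$ unless $\chi$ is trivial on the open compact $K_H$, cf.\ Lemma~\ref{importantchar}); this finiteness is what makes the ``sum over $\chi$'' manageable and is not merely a matter of uniform bounds.
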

\begin{rem}
\normalfont
Note that as the cardinalities of the automorphism groups~$\Aut(x)$ for $x\in BG(F)$ are bounded by~$\#G(\oF)$, Conjecture \ref{malletale} for the function field case is equivalent to the version where a point $x\in BG(F)$ is counted with the multiplicity $\#\Aut(x)$.
\end{rem}
\begin{rem}
\normalfont  If $\sigma\in Z^1(\Gamma_F, G(\oF))$ and $y\in B(\sG)(F)$ is secure, then $\lambda_{\sigma}(y)\in BG(F)$ is secure.  Indeed, let $\widetilde {y}\in Z^1(\Gamma_F, G(\oF))$ be a lift of~$y$. Then $\prescript{}{\widetilde y}(\sG)=\prescript{}{\Lambda_{\sigma}(y)}G$ and the claim follows from Proposition \ref{usefulcriterion}. It follows that Conjecture \ref{malletale} is valid for the finite group scheme~$G$ with the respect to the height $H$ if and only if it is valid for the finite group scheme~$\sG$ with the respect to the height $H\circ \lambda_{\sigma}$. 
\end{rem}
\section{Proof for~$G$ commutative}\label{sectioncommutativecase}
Let~$F$ be a global field and let~$G$ be a non-trivial commutative finite \' etale tame $F$-group scheme. Let~$e$ be the exponent of~$G(\oF)$. The goal of this section is to prove Conjecture \ref{malletale} on the number of $G$-torsors of bounded height.
\subsection{Adelic space of $BG$}
We recall useful properties of the ``adelic space" of the stack~$BG,$ that is of the restricted product of local cohomologies with the respect to the unramified cohomologies. 
\subsubsection{}For every $v\in M_F$, we endow the finite groups $BG(F_v):=H^1(\Gamma_v, G(\oF))$ with the discrete topology. Let~$\Sigma_G$ be the set of finite places of~$F$ such that~$G(\oF)$ is unramified at~$v$. For every~$v\in M_F^0-\Sigma_G$, we will write~$BG(\Ov)$ for the subgroup $H^1(\Gamma_v^{\un}, G(\oF))$ (a more precise notation would be $B\mathcal G(\Ov)$, where~$\mathcal G$ is an \' etale model of~$G$ over~$\Ov$, but we think that our choice will not make confusions).  
We set $$BG(\AAA):=\sideset{}{'}\prod_{v\in M_F}BG(F_v)[BG(\Ov)] ,$$ where the restricted product is with the respect to the open and compact subgroups $BG(\Ov)\subset BG(F_v)$ for $v\in M_F-\Sigma_G$ (see e.g. \cite[Chapter VIII, \S 6]{CohomologyNF}). 
As for every~$\vMF$, the groups $BG(F_v)$ are Hausdorff and compact, one sees immediately that the topological group~$BG(\AAA)$ is Hausdorff and locally compact. By Lemma~\ref{klasikatreba}, the diagonal map $BG(F)\to\prod_{\vMF}BG(F_v)$ factorizes through the map $i:BG(F)\to BG(\AAA).$
For $r=1,2$, let us denote by $\Sh^r(G)$ the kernel of the diagonal map $$H^r(\Gamma_F, G(\oF))\to \prod_{v\in M_F}H^r(\Gamma_v,G(\oF)).$$ The finiteness of $\Sh^r(G)$ is proven in \cite[Chapter I, Theorem 4.10]{ArithmeticDuality}.

The following result is a restatement of \cite[Theorem 8.3.20 (ii)]{CohomologyNF} (it is a stronger result than finiteness of $\Sh^1(G)$).
\begin{lem}\label{finlocun}
Let $\Sigma\supset\Sigma_G\cup M_F^{\infty}$ be a finite set of places of~$F$. The group $$i(BG(F))\cap\bigg(\prod_{v\in \Sigma}BG(F_v)\times\prod_{v\in M_F^0-\Sigma}B\GG(\Ov)\bigg)$$ is finite.
\end{lem}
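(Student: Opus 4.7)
The plan is to identify the indicated intersection with the subgroup of $BG(F)=H^1(\Gamma_F,G(\oF))$ consisting of cohomology classes that are unramified at every place $v\in M_F^0-\Sigma$, and to reduce its finiteness to a standard finiteness statement for $\Sigma$-unramified cohomology.

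First I would remark that the kernel of $i:BG(F)\to BG(\AAA)$ is a subgroup of $\Sh^1(G)$, which is finite by \cite[Chapter I, Theorem 4.10]{ArithmeticDuality} (as cited in the paragraph preceding the lemma). Consequently it suffices to bound the preimage in $BG(F)$ of the product subgroup, i.e.\ the group
\begin{equation*}
H^1_\Sigma(F,G(\oF)):=\{x\in H^1(\Gamma_F,G(\oF))\mid x_v\in H^1(\Gamma_v^{\un},G(\oF))\text{ for all }v\in M_F^0-\Sigma\}.
\end{equation*}
Note that for $v\in M_F^0-\Sigma\subset M_F^0-\Sigma_G$ the group $G$ is unramified and tame at $v$, so the unramified subgroup $BG(\Ov)$ is well defined by the discussion of Paragraph~\ref{equivalenceofcat}.

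Next I would establish finiteness of $H^1_\Sigma(F,G(\oF))$ by inflation-restriction along the finite Galois extension $K/F$ cut out by the $\Gamma_F$-action on $G(\oF)$. The kernel term $H^1(\Gal(K/F),G(\oF))$ is trivially finite. Enlarging $\Sigma$ to contain the places where $K/F$ ramifies (which is harmless: a larger $\Sigma$ only enlarges the group we want to bound), any class in $H^1_\Sigma(F,G(\oF))$ restricts to an element of $\Hom(\Gamma_K,G(\oF))$ unramified at every place of $K$ lying above $M_F^0-\Sigma$. Such homomorphisms factor through the Galois group of the maximal abelian extension $L/K$ of exponent dividing $e=e(G(\oF))$ that is unramified outside the preimage $\Sigma_K$ of $\Sigma$ in $K$, and $\Hom(\Gal(L/K),G(\oF))$ is finite by global class field theory (the $\Sigma_K$-ideal class group is finite, and the group of $\Sigma_K$-units modulo $e$-th powers is finite by Dirichlet's unit theorem, respectively its function-field analogue). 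Since both the kernel and the image in $H^1(\Gamma_K,G(\oF))$ of the restriction map are finite when restricted to $H^1_\Sigma(F,G(\oF))$, the latter group is finite.

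The only real content is the class field theoretic input, and the excerpt itself points to \cite[Theorem 8.3.20(ii)]{CohomologyNF} as the precise reference. Accordingly the cleanest presentation is to carry out the identification of the intersection with $H^1_\Sigma(F,G(\oF))$ and then quote that theorem. I do not expect any serious obstacle beyond making this identification, since the ``$v\in \Sigma$'' factors are unconstrained and the ``$v\notin\Sigma$'' factors are precisely the local unramified condition that defines $H^1_\Sigma$.
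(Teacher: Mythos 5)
Your argument is correct, and it genuinely goes beyond what the paper itself does: the paper disposes of this lemma in one sentence, saying it is a restatement of \cite[Theorem 8.3.20(ii)]{CohomologyNF}, while you both cite that theorem and sketch the standard from-scratch proof of it. Your reduction is sound: the intersection in question is exactly $i\bigl(H^1_\Sigma(F,G(\oF))\bigr)$, so it suffices that the preimage $H^1_\Sigma(F,G(\oF))=i^{-1}(\prod_{v\in\Sigma}BG(F_v)\times\prod_{v\notin\Sigma}BG(\Ov))$ be finite; the inflation--restriction sequence along the splitting field $K/F$ of $G(\oF)$ has finite kernel $H^1(\Gal(K/F),G(\oF))$, and after enlarging $\Sigma$ (harmlessly, since a larger $\Sigma$ gives a larger group) to contain the ramification of $K/F$, the restriction of a class unramified outside $\Sigma$ gives a homomorphism $\Gamma_K\to G(\oF)$ unramified outside $\Sigma_K$ and of exponent dividing $e$, and such homomorphisms form a finite group by class field theory. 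One small redundancy: the opening remark that $\ker i\subset\Sh^1(G)$ is finite is not needed for your reduction --- the image under $i$ of a finite set is automatically finite, whatever the kernel; that fact would only matter for the converse direction, which you do not need. Otherwise the sketch is complete and would stand as a self-contained alternative to the bare citation in the paper.
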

\subsubsection{} In this paragraph, we choose normalization of Haar measures on $i(BG(F))$ and $BG(\AAA)$ and calculate the volume of $BG(\AAA)/i(BG(F))$. The volume, but with different normalizations, has been calculated in \cite[Chapter I, Theorem 5.6]{ArithmeticDuality}.  
If $X$ is a discrete space, let us denote by $\coun_X$ the counting measure on~$X$.
\begin{mydef}
For $v\in M_F$, let us denote by $\mu_v$ the Haar measure on $BG(F_v)$ given by $$\mu_v:=\frac{\coun_{BG(F_v)}}{\# G(F_v)}.$$
\end{mydef}
\begin{rem}
\normalfont
In other words, the measure~$\mu_v$ is the counting measure on $BG(F_v)$ weighted by the inverse of the size of the automorphism group of an element in $BG(F_v)$. 
\end{rem}
For every $v\in M_F^0-\Sigma_G$, by using \cite[Lemma 10.14]{Hararicohom}, we have that $$\mu_v(BG(\Ov))=\frac{\# BG(\Ov)}{\# G(F_v)}=\frac{\# H^1(\Gamma_v^{\un}, G)}{\# H^0(\Gamma_v^{\un}, G)}=\frac{\# H^1(\widehat{\ZZ}, G)}{\# H^0(\widehat{\ZZ}, G)}=1.$$
\begin{mydef}
Let us denote by~$\mu$ the unique Haar measure on the locally compact group $BG(\AAA)$ such that the following condition is satisfied: for every finite set of places~$\Sigma'$ containing the set~$\Sigma_G$ and the set of infinite places~$M_F^\infty$, one has for every choice of open subsets  $U_v\subset BG(F_v)$ for $v\in \Sigma'$, that $$\mu\bigg(\prod_{v\in \Sigma '}U_v\times\prod_{v\in M_F-\Sigma'}B\GG(\Ov)\bigg)=\prod_{v\in\Sigma'}\mu_v(U_v).$$ The existence of a such measure is guaranteed by \cite[Proposition 5.5]{Ramakrishnan}. 
\end{mydef}
Endow $i(BG(F))$ with the measure $\frac{\#\Sh^1(G)}{\#G(F)}\cdot \coun_{i(BG(F))}$.  
This is the pushforward measure of the measure $\frac{\coun_{BG(F)}}{\# G(F)}$, i.e. the counting measure on $BG(F)$, weighted by the inverse of the size of the automorphism group. In the next lemma, we calculate the Tamagawa number of the stack~$BG$ (that is the volume for the quotient measure \cite[Chapter VII, \S 2, \no 7, Proposition 10]{Integrationd} of the quotient group $BG(\AAA)/i(BG(F))$ for certain choices of Haar measures). 
\begin{lem}\label{tamagawabg}
One has that:
\begin{align*}
\tau_{BG}&:=\bigg(\mu/\bigg(\frac{\#\Sh^1(G)\cdot\coun_{i(BG(F))}}{\#G(F)}\bigg)\bigg)\big(BG(\AAA)/(i(BG(F)))\big)\\
&=\frac{\# G^*(F)}{\#\Sh^2(G)}.
\end{align*}
\end{lem}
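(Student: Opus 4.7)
The plan is to compute the quotient measure by restricting to a sufficiently large open compact subgroup. Choose a finite $\Sigma\supset\Sigma_G\cup M_F^\infty$ large enough that $i(BG(F))+U_\Sigma=BG(\AAA)$, where $U_\Sigma=\prod_{v\in\Sigma}BG(F_v)\times\prod_{v\notin\Sigma}BG(\Ov)$. Such $\Sigma$ exists because the Poitou-Tate $9$-term exact sequence realizes $BG(\AAA)/i(BG(F))$ as a finite-index subgroup (of index $\#\Sh^2(G)$) of the compact group $H^1(F,G^*)^\vee$, so the quotient is compact. Since $U_\Sigma$ is a subgroup and $i(BG(F))$ is discrete by Lemma~\ref{finlocun}, each $i(BG(F))$-coset meets $U_\Sigma$ in exactly $|i(BG(F))\cap U_\Sigma|=\#H^1(G_\Sigma,G)/\#\Sh^1(G)$ points (using $\ker i=\Sh^1(G)$). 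A fundamental-domain computation then gives
$$\tau_{BG}=\frac{\#G(F)}{\#\Sh^1(G)}\cdot\frac{\mu(U_\Sigma)}{|i(BG(F))\cap U_\Sigma|}=\frac{\#G(F)\cdot\mu(U_\Sigma)}{\#H^1(G_\Sigma,G)},$$
so the task reduces to showing the right-hand side equals $\#G^*(F)/\#\Sh^2(G)$ for all sufficiently large $\Sigma$.

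For this I combine Tate's global Euler--Poincar\'e formula
$$\#H^1(G_\Sigma,G)=\#G(F)\cdot\#H^2(G_\Sigma,G)\cdot\prod_{v\mid\infty}\frac{|G|^{[F_v:\RR]}}{\#H^0(F_v,G)}$$
with the tail of the Poitou-Tate sequence for $G_\Sigma$,
$$0\to\Sh^2(G)\to H^2(G_\Sigma,G)\to\prod_{v\in\Sigma}\widetilde H^2(F_v,G)\to G^*(F)^\vee\to 0$$
(with $\widetilde H^i$ denoting Tate-modified cohomology at $v\mid\infty$), which for $\Sigma$ large yields $\#H^2(G_\Sigma,G)=\#\Sh^2(G)\cdot\prod_{v\in\Sigma}\#\widetilde H^2(F_v,G)/\#G^*(F)$. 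Local Tate duality $\#\widetilde H^2(F_v,G)=\#\widetilde H^0(F_v,G^*)$ and the local Euler characteristic $\chi_v(G)=|\#G|_v$ at non-archimedean $v$ produce a parallel expression for $\mu(U_\Sigma)=\prod_{v\in\Sigma}\#H^1(F_v,G)/\#H^0(F_v,G)$. In the ratio, the non-archimedean factors $\#G^*(F_v)$ cancel directly, and the bad-prime factors $|\#G|_v$ combine via the product formula $\prod_v|\#G|_v=1$ into $|G|^{r_1+2r_2}$, canceling the corresponding factor from Euler--Poincar\'e.

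The main obstacle is the archimedean bookkeeping, since $\mu_v$ at $v\mid\infty$ uses ordinary cohomology whereas the $9$-term sequence uses Tate-modified cohomology. This is reconciled by two facts: for a finite $G_v$-module at $v\mid\infty$ the Herbrand quotient equals $1$, so $\#\widetilde H^0_v=\#\widetilde H^1_v=\#\widetilde H^2_v$ (which in particular turns $\#H^1(F_v,G)$ at arch $v$ into $\#\widetilde H^0(F_v,G)$); and archimedean local Tate duality combined with the $2$-periodicity of Tate cohomology gives the symmetry $\#\widetilde H^0(F_v,G)=\#\widetilde H^0(F_v,G^*)$. Using these, every archimedean correction factor in $\#H^1(G_\Sigma,G)$ pairs with a matching one in $\mu(U_\Sigma)$ and the two cancel, yielding $\#H^1(G_\Sigma,G)/\mu(U_\Sigma)=\#G(F)\#\Sh^2(G)/\#G^*(F)$ and thus $\tau_{BG}=\#G^*(F)/\#\Sh^2(G)$. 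The function field case is strictly simpler: there are no archimedean contributions, Tate's global Euler--Poincar\'e reduces to $\chi(G_\Sigma,G)=1$, and the archimedean cancellations are vacuous.
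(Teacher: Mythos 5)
The paper proves this lemma in a few lines by relating the chosen normalization $\mu$ to Milne's normalization $\mu'$ from \emph{Arithmetic Duality Theorems} and then quoting Milne's Theorem I.5.6 (together with $\#\Sh^1(G^*)=\#\Sh^2(G)$) to read off the volume. You instead reprove the underlying volume computation from first principles, and this is a genuinely different route. Your reduction
$\tau_{BG}=\frac{\#G(F)}{\#\Sh^1(G)}\cdot\frac{\mu(U_\Sigma)}{|i(BG(F))\cap U_\Sigma|}=\frac{\#G(F)\,\mu(U_\Sigma)}{\#H^1(G_\Sigma,G)}$
is correct (it uses $\ker i=\Sh^1(G)$, the inflation identification of $H^1(G_\Sigma,G)$ with classes unramified outside $\Sigma$, and cocompactness of $i(BG(F))$ to find $\Sigma$ with $i(BG(F))+U_\Sigma=BG(\AAA)$); and the subsequent cancellation via the global Euler--Poincar\'e formula, the tail of the Poitou--Tate sequence with $\#\Sh^2_\Sigma(G)=\#\Sh^2(G)$ for $\Sigma$ large, local duality, the local Euler characteristic, and the product formula $\prod_v\|\#G\|_v=1$ does close up. The version of the global Euler--Poincar\'e formula you invoke, with $\#H^0(F_v,G)$ rather than $\#\widehat H^0(F_v,G)$ at archimedean $v$, is the right one, and the archimedean bookkeeping you describe (Herbrand quotient $1$ plus $2$-periodicity, equivalently the symmetry $\#\widehat H^0(F_v,G)=\#\widehat H^0(F_v,G^*)$) is exactly what is needed to turn the $\#\widehat H^2(F_v,G)$ from the Poitou--Tate tail into $\#H^1(F_v,G)$. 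In effect, you are re-deriving Milne's Theorem I.5.6 rather than citing it; what this buys is self-containedness at the cost of carrying out the archimedean and product-formula cancellations that Milne's theorem already packages. The only thing I would add is a sentence making explicit that the global Euler--Poincar\'e formula and the large-$\Sigma$ identification $\Sh^2_\Sigma(G)=\Sh^2(G)$ require $\Sigma$ to contain the archimedean places and the places dividing $\#G(\oF)$, which is compatible with the tameness assumption and can always be arranged by enlarging $\Sigma$.
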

\begin{proof}
The Haar measure~$\mu'$ from \cite[Page 71]{ArithmeticDuality} on the locally compact abelian group $BG(\AAA)$ is normalized by $$\mu'\bigg(\prod_{v\in M_F^0}H^1(\Gamma_{\kappa_v}, G(\overline{F})^{I_v})\times\prod_{v\in M_F^\infty}\{1\}\bigg)=1.$$
By using that the fact that for every $\vMFz$ one has $$\# H^1(\Gamma_{\kappa_v}, G(F_v)^{I_v})=\# H^0(\Gamma_{\kappa_v}, G(F_v)^{I_v})=\# G(\oF)^{\Gamma_v}=\# G(F_v),$$ we obtain that
\begin{align*}
\mu\bigg(\prod_{v\in M_F^0}H^1(\Gamma_v^{\un}, G(\overline{F}))\times\prod_{v\in M_F^\infty}\{1\}\bigg)=\prod_{\vMFi}\#G(F_v)^{-1},
\end{align*}
and hence that $\mu':\mu=\prod_{\vMFi}\#G(F_v)^{-1}$. 
By \cite[Chapter I, Theorem 5.6]{ArithmeticDuality} and the fact that $\#\Sh^1(G^*)=\#\Sh^2(G)$ from \cite[Chapter I, Theorem 4.20 a)]{ArithmeticDuality}, we have that
\begin{align*}
\tau_{BG}&= \frac{\# G(F)\cdot (\mu'/\coun_{i(BG(F))})\big(BG(\AAA)/(i(BG(F)))\big)}{\big(\prod_{v\in M_F^{\infty}}\# G(F_v)\big)\cdot \#\Sh^1(G)}\\
&=\frac{\# G(F)\cdot \frac{\#\Sh^1(G)\cdot \# G^*(F)}{\#\Sh^1(G^*)\cdot \#G(F)}\cdot\prod_{\vMFi}\# G(F_v)}{\big(\prod_{v\in M_F^{\infty}}\# G(F_v)\big)\cdot \#\Sh^1(G)}\\
&=\frac{\# G^*(F)}{\#\Sh^2(G)}.
\end{align*}
\end{proof}
\subsection{Characters}Let $G^*$ be the Cartier dual of~$G$. That is, it is the \' etale $F$-group scheme corresponding to~$\Gamma_F$-module $\Hom(G(\oF), \muu_e),$ where the action is given by $\gamma\cdot f=\gamma\circ f\circ\gamma^{-1}$. 

\subsubsection{}Let~$A$ be a $\Gamma_v$-module. For every~$i\geq 1$, by \cite[Definition 1.4.3]{brauergrothendieck}, one can define residue map: $$r:H^i(\Gamma_v, A)\to H^{i-1}(\Gamma_v^{\un}, A_*),$$  and its kernel is $H^i(\Gamma_v^{\un}, A)$. Note that~$\psi_v^{A}$ is nothing other than $r:H^1(\Gamma_v, A)\to H^0(\Gamma_v^{\un}, A_*)$. For~$\vMF$, we set $$ \inv_v:\Br(F_v)=H^2(\Gamma_v, \overline{\Fv}^{\times})\to \QQ/\ZZ$$ for the invariant map at the place~$v$. For finite~$v$ such that $v(e)=0$, by \cite[Theorem 1.4.14]{brauergrothendieck}, the map $$H^2(\Gamma_v,\muu_e)\to H^2(\Gamma_v^{\un}, \ZZ/e\ZZ)=\bigg(\frac{1}{e}\ZZ\bigg)/\ZZ\subset\QQ/\ZZ\hspace{1cm}x\mapsto -r(x)$$ identifies with the restriction $\inv_v|H^2(\Gamma_v,\muu_e)\to\QQ/\ZZ$. 
\subsubsection{}For $v\in M_F$, we have Tate pairing 
\begin{equation*}
H^1(\Gamma_v, G(\oF))\times H^1(\Gamma_v, G^*(\oF))\to H^2(\Gamma_v,\overline{\Fv}^{\times})\xrightarrow{\inv_v}\QQ/\ZZ, \end{equation*}\begin{equation*}(x,\chi)\mapsto \inv_v(x\cup \chi),
\end{equation*}
where $\cup$ stands for the cup product. 
%
By \cite[Corollary 2.3]{ArithmeticDuality}, the pairing is perfect, that is, it identifies $H^1(\Fv, G^*)$ canonically with the topological dual $H^1(\Gamma_v, G(\oF))^*$.  Whenever $G$ is unramified at~$v$, the subgroups $H^1(\Gamma_v^{\un}, G(\oF))$ and $H^1(\Gamma_v^{\un}, G^*(\oF))$ are orthogonal complements for the pairing. 
\begin{lem}\label{tatepairing}
Let $v\in M_F^0$ be a place such that $G(\overline F)$ is unramified at $v$. Let $x\in H^1(\Gamma_v, G(\overline F))$ and let $\chi_v\in H^1(\Gamma_v^{\un}, G^*(\overline F))\subset H^1(\Gamma_v, G(\oF)).$ Denote by $\widetilde\chi_v:\Gamma_v^{\un}\to G^*(\oF)$ a crossed homomorphism lifting $\chi$. One has that \begin{multline*}\inv_v(x\cup\chi)=-(\widetilde\chi(\Frob_v)\circ\psi^G_v(x))\\(\in\Hom(\muu_e,\muu_e)=\ZZ/e\ZZ=\bigg(\frac{1}{e}\ZZ\bigg)/\ZZ\subset\QQ/\ZZ).$$ 
\end{multline*}
\end{lem}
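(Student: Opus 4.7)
The plan is to exploit the compatibility of the residue map $r$ with cup products, together with the identification $\inv_v=-r$ on $H^2(\Gamma_v,\muu_e)$ recalled just before the statement. Since $\chi$ lies in the image of the injective inflation $H^1(\Gamma_v^{\un},G^*(\oF))\hookrightarrow H^1(\Gamma_v,G^*(\oF))$, one has $r(\chi)=0$. Applying the Leibniz-type formula for the residue of a cup product (from the discussion surrounding \cite[Theorem~1.4.14]{brauergrothendieck}) to the evaluation pairing $G(\oF)\otimes G^*(\oF)\to\muu_e$, only the surviving term gives
\[
r(x\cup\chi)=\psi_v^G(x)\cup\chi\ \in\ H^1(\Gamma_v^{\un},(\muu_e)_*)=H^1(\Gamma_v^{\un},\ZZ/e\ZZ),
\]
where the cup product on the right uses the pairing of Tate twists $G_*(\oF)\otimes G^*(\oF)\to(\muu_e)_*=\ZZ/e\ZZ$ induced from evaluation. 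Unwinding the definition of $(-)_*$, this induced pairing is simply composition, $(\varphi,\psi)\mapsto\psi\circ\varphi$.

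Next, I would compute this cup product at the cocycle level. Since $\psi_v^G(x)$ is a Galois-fixed element of $G_*(\oF)$ and $\widetilde\chi$ is a $1$-cocycle representing $\chi$, the cup product $\psi_v^G(x)\cup\chi$ is represented by the $1$-cocycle
\[
\gamma\mapsto \widetilde\chi(\gamma)\circ\psi_v^G(x).
\]
By Part~(2) of Lemma~\ref{slabalema}, the $\Gamma_v^{\un}$-action on $(\muu_e)_*=\ZZ/e\ZZ$ is trivial, so this formula defines a genuine homomorphism $\Gamma_v^{\un}\to\ZZ/e\ZZ$. Under the identification $H^1(\Gamma_v^{\un},\ZZ/e\ZZ)=\ZZ/e\ZZ$ given by evaluation at the topological generator $\Frob_v$, this class corresponds to $\widetilde\chi(\Frob_v)\circ\psi_v^G(x)$. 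Combining with $\inv_v(x\cup\chi)=-r(x\cup\chi)$ yields the claim.

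The main obstacle I anticipate is the careful invocation of the Leibniz formula for residues of cup products: one must check that, with one unramified factor, the only surviving contribution is $\psi_v^G(x)\cup\chi$, and that the pairing induced on residues is indeed the composition pairing $(\varphi,\psi)\mapsto\psi\circ\varphi$. This reduces to unwinding the definition of the Tate twist and of the cup product attached to the evaluation pairing; the rest is a routine cocycle manipulation. As a consistency check, one can verify directly that $\widetilde\chi(\Frob_v)\circ\psi_v^G(x)$ depends on $\widetilde\chi$ only up to $1$-coboundaries, using the $\Gamma_v$-fixedness of $\psi_v^G(x)$ together with the trivial action on $\End(\muu_e)$.
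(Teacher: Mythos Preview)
Your proposal is correct and follows essentially the same approach as the paper's proof. The paper invokes \cite[Proposition~6.6]{cohomologicalinvariants} directly to obtain $r(x\cup\chi)=r(x)\cup\chi$ (your Leibniz step with one unramified factor), identifies the induced pairing $G_*(\oF)\times G^*(\oF)\to\ZZ/e\ZZ$ as composition $(f,g)\mapsto g\circ f$, and then evaluates at $\Frob_v$ via \cite[Chapter~XIII, Proposition~1]{corpslocaux}; your cocycle-level computation is the same argument unpacked.
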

\begin{proof}
By \cite[Proposition 6.6]{cohomologicalinvariants}, one has that $$\inv_v(x\cup\chi)=-r(x\cup \chi)=-(r(x)\cup \chi),$$
where the cup product on the right hand side $$H^0(\Gamma_v^{\un}, G_*(\oF))\times H^1(\Gamma_v^{\un}, G^*(\oF)) \to H^1(\Gamma_v^{\un}, \ZZ/e\ZZ)=\ZZ/e\ZZ$$ is defined by the pairing 
$$G_*(\oF)\times G^*(\oF)\to \Hom(\muu_e,\muu_e)=\ZZ/e\ZZ\hspace{1cm}(f,g)\mapsto g\circ f.$$ We have that $r(x)=\psi^G_v(x)\in G_*(\Fv)\subset G_*(\oF)$. For the identification $$Z^1(\Gamma_v^{\un}, G^*(\oF))\xrightarrow{\sim} G^*(\oF)\hspace{1cm} y\mapsto y(\Frob_v),$$ by \cite[Chapter XIII, Proposition 1]{corpslocaux}, one has that~$\chi\in H^1(\Gamma_v^{\un}, G^*(\oF))$ is the image of~$\widetilde\chi (\Frob_v)$ under the quotient map. We deduce that 
$$r(x)\cup\chi=(\widetilde\chi (\Frob_v))\circ\psi^G_v(x).$$ The statement follows.
%
%
\end{proof}
Using the injective homomorphism $$(\QQ/\ZZ)\to S^1\hspace{1cm}q+\ZZ\mapsto \exp(2i\pi q),$$ we may identify $\QQ/\ZZ$ with a subgroup of~$S^1$. 
For $x\in H^1(\Gamma_v, G(\oF))$ and $\chi\in H^1(\Gamma_v, G^*(\oF))$, we write~$$\chi(x):=-\inv_v(x\cup\chi)\in S^1.$$
\subsubsection{}One has a global pairing $$BG(\AAA)\times BG^*(\AAA)\to S^1,\hspace{1cm}((x_v)_v,(\chi_v)_v)\mapsto \prod_{v}\chi_v(x_v).$$ By \cite[Page 56]{ArithmeticDuality}, the pairing is perfect. By \cite[Chapter I, Theorem 4.10]{ArithmeticDuality}, the image of $i:BG(F)\to BG(\AAA)$ is discrete, closed and cocompact. Moreover, the same theorem gives that the subgroups $i(BG(F))\subset BG(\AAA)$ and $i(BG^*(F))\subset BG^*(\AAA)$ are orthogonal complements.
\begin{lem}\label{smallemch}
Let $\Sigma '\supset M_F^\infty\cup \Sigma_G$ be a finite set of places of $F$. For $v\in\Sigma'$, let $T_v\subset BG(F_v)$ be a subgroup. One has that $$\bigg(\prod_{v\in \Sigma'}T_v\times\prod_{v\in M_F^0-\Sigma'}B\GG(\Ov)\bigg)^{\perp}=\prod_{v\in\Sigma'}T_v^{\perp}\times\prod_{M_F^0-\Sigma'}B\GG^*(\Ov).$$
\end{lem}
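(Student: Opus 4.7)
The plan is to prove both inclusions by exploiting two facts: the global pairing factors as the product of the local Tate pairings, and at every place $v \notin \Sigma_G$ the subgroups $BG(\Ov)$ and $BG^*(\Ov)$ are orthogonal complements of each other under the local pairing (as recalled right before the statement, via \cite[Corollary 2.3]{ArithmeticDuality}). Set
\[
H := \prod_{v\in \Sigma'}T_v\times\prod_{v\in M_F^0-\Sigma'}B\GG(\Ov),\qquad
K := \prod_{v\in\Sigma'}T_v^{\perp}\times\prod_{v\in M_F^0-\Sigma'}B\GG^*(\Ov).
\]

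For the inclusion $K \subseteq H^{\perp}$, take $\chi=(\chi_v) \in K$ and $x=(x_v) \in H$. Then $\chi_v(x_v)=1$ for every $v \in \Sigma'$ because $\chi_v \in T_v^{\perp}$, and $\chi_v(x_v)=1$ for every $v \in M_F^0-\Sigma'$ because both $x_v$ and $\chi_v$ lie in the orthogonal unramified subgroups $BG(\Ov)$ and $BG^*(\Ov)$. Hence $\prod_v \chi_v(x_v)=1$, which is exactly $\chi \in H^{\perp}$.

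For the reverse inclusion $H^{\perp}\subseteq K$, fix $\chi=(\chi_v)\in H^{\perp}$ and isolate one place at a time. For each $v_0 \in M_F$, define an auxiliary element $x^{(v_0,y)} \in BG(\AAA)$ for $y$ in an appropriate local group by putting $x^{(v_0,y)}_{v_0}=y$ and $x^{(v_0,y)}_{v}=1$ (the trivial torsor) for $v \neq v_0$. Since $1 \in T_v$ (as each $T_v$ is a subgroup) and $1 \in BG(\Ov)$, the element $x^{(v_0,y)}$ lies in $H$ as long as $y \in T_{v_0}$ (in case $v_0 \in \Sigma'$) or $y \in BG(\Ov)$ (in case $v_0 \in M_F^0-\Sigma'$). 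Evaluating $\chi$ on $x^{(v_0,y)}$ collapses the product to the single factor $\chi_{v_0}(y)=1$. If $v_0 \in \Sigma'$ this yields $\chi_{v_0} \in T_{v_0}^{\perp}$; if $v_0 \in M_F^0 - \Sigma'$ this yields $\chi_{v_0} \in BG(\Ov)^{\perp}=BG^*(\Ov)$ by the orthogonality of the unramified subgroups recalled above. Collecting these conditions over all $v_0$ shows $\chi \in K$.

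Neither step is really the hard part; the only point that requires a little care is making sure that the auxiliary adelic elements $x^{(v_0,y)}$ genuinely lie in the restricted product $BG(\AAA)$, which is immediate because all but one of their components is the trivial torsor and the trivial torsor lies in $BG(\Ov)$. No further hypotheses beyond commutativity of $G$ and the standard local duality at unramified places are needed.
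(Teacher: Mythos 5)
Your proof is correct and follows essentially the same approach as the paper: to show $H^\perp \subseteq K$ one tests $\chi$ against adelic elements with a single non-trivial component and uses the local orthogonality of $BG(\Ov)$ and $BG^*(\Ov)$. You additionally write out the easy inclusion $K \subseteq H^\perp$, which the paper leaves implicit — a small but welcome addition of completeness.
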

\begin{proof}
Let $\chi \in \bigg(\prod_{v\in \Sigma'}BG(F_v)\times\prod_{v\in M_F^0-\Sigma'}B\GG(\Ov)\bigg)^{\perp}$. Let $w$ be a place of $F.$ Suppose that $w\in \Sigma'$. For every $x\in T_w,$ one has that $\chi ((x)_w,(1)_{v\neq w})=\chi_w(x)=1$, hence, $\chi_w\in T_w^\perp$. Suppose that $w\in M_F^0-\Sigma'$. For every $x\in B\GG(\OO_w),$ one has that $\chi((x)_w,(1)_{v\neq w})=1$, hence $\chi_w\in B\GG^*(\OO_w)$. The claim follows.
\end{proof}
\subsection{Representation associated to a character} In this subsection, we are going to ``twist" the representation given by a $\Gamma_F$-invariant subset of $G_*(\oF)$ by a character $\chi\in H^1(\Gamma_F, G^*(\oF))$. We are then going to study the analytic behaviour of corresponding $L$-functions. The results will be used in Subsection \ref{Fouriertransformsa} to understand the analytic behaviour of Fourier transforms of heights. 

\subsubsection{}\label{definofreprho} In this paragraph, we define our representations.

Let us denote by $\CC\cdot G_*(\overline F)$ the $\CC$-vector space $$ \CC\cdot G_*(\overline F):=\{(z_{\phi}\cdot \phi)_{\phi}|\phi\in G_*(\overline F), z_{\phi}\in \CC\}.$$ For $\Gamma_F$-invariant subset $W\subset G_*(\oF),$ we write $\CC\cdot W$ for the subspace of $\CC\cdot G_*(\oF)$ generated by the elements of~$W.$  
For $\eta\in G^*(\oF)$ and $\phi\in G_*(\oF)$ we may write $\eta(\phi)$ for $$\eta(\phi)=\eta\circ\phi\in\Hom(\muu_e,\muu_e)=\ZZ/e\ZZ=\bigg(\frac{1}{e}\ZZ\bigg)/\ZZ\subset\QQ/\ZZ\subset S^1.$$ Let $\widetilde \chi\in Z^1(\Gamma_F, G^*(\overline F)).$ For a $\Gamma_F$-invariant subset $W\subset G^*(\oF)$, we define a map $\rho^W_{\widetilde\chi}: \Gamma_F\to\End(\CC\cdot W)$ by  
\begin{align*}
(\rho^W_{\widetilde\chi}(\gamma))_{\phi, \phi'}=\begin{cases} (\widetilde\chi(\gamma))(\phi)&\quad \text{if } \phi=\gamma\cdot\phi',\\
0&\quad\text{otherwise.}
\end{cases}
\end{align*}
Note that every $\rho^W_{\widetilde\chi}(\gamma)$ is a generalized permutation matrix (i.e. it can be obtained from a permutation matrix by replacing ones by non-zero numbers) having $e$-th roots of unity for its non-zero entries. In particular, one has that $\rho^W_{\widetilde\chi}(\Gamma_F)\subset \GL(\CC\cdot W)$. 
\begin{lem}\label{rhorep} Let $\widetilde\chi\in Z^1(\Gamma_F, G^*(\overline F))$ and let $W\subset \CC\cdot G_*(\oF)$ be a $\Gamma_F$-invariant subset.
\begin{enumerate}
\item  For $\gamma,\gamma'\in\Gamma_F$, one has that $\rho^W_{\widetilde\chi}(\gamma\gamma')=\rho^W_{\widetilde\chi}(\gamma)\rho^W_{\widetilde\chi}(\gamma')$.
\item The homomorphism $\rho^W_{\widetilde\chi}:\Gamma_F\to\GL(\CC\cdot W)$ is continuous. In other words, $\rho^W_{\widetilde\chi}$ is a Galois representation.
\item Let $\theta\in G^*(\oF)$ and let $\gamma\in\Gamma_F$. We denote by $D^{\theta}(\gamma)$ the diagonal matrix such that for every $\phi\in G_*(\oF)$ the entry at the place $(\phi,\phi)$ is $\theta(\phi)$. One has that $$\rho^W_{\widetilde\chi}(\gamma)\circ D^{\theta}(\gamma)=D^{\theta}(\gamma)\circ\rho^W_{(\widetilde\chi)(\gamma\cdot \theta)(\theta^{-1})}(\gamma).$$Thus, the representations $\rho_{\widetilde\chi}$ and $\rho_{(\widetilde\chi)(\gamma\cdot\theta)(\theta^{-1})}$ are isomorphic.
\end{enumerate}
\end{lem}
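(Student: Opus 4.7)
\medskip

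\textbf{Proof plan.}

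\emph{Part (1).} The plan is to compute the matrix entries of both sides. For fixed $\phi,\phi''\in G_*(\oF)$, the entry $(\rho^W_{\widetilde\chi}(\gamma)\rho^W_{\widetilde\chi}(\gamma'))_{\phi,\phi''}$ is obtained by summing over an intermediate index $\phi'$, and by the definition of $\rho^W$ there is at most one nonzero summand, namely $\phi'=\gamma'\cdot\phi''$ (which forces $\phi=\gamma\cdot\phi'=(\gamma\gamma')\cdot\phi''$). The contribution is then $(\widetilde\chi(\gamma))(\phi)\cdot(\widetilde\chi(\gamma'))(\phi')$. On the other hand, the cocycle relation for $\widetilde\chi\in Z^1(\Gamma_F,G^*(\oF))$ gives $\widetilde\chi(\gamma\gamma')=\widetilde\chi(\gamma)\cdot(\gamma\cdot\widetilde\chi(\gamma'))$, so
\begin{equation*}
(\widetilde\chi(\gamma\gamma'))(\phi)=(\widetilde\chi(\gamma))(\phi)\cdot\bigl((\gamma\cdot\widetilde\chi(\gamma'))(\phi)\bigr).
\end{equation*}
The key point I would then invoke is that the $\Gamma_F$-action on $\Hom(\muu_e,\muu_e)=\ZZ/e\ZZ$ (values of the pairing) is \emph{trivial}: for any power map $\xi\mapsto\xi^k$ one directly checks $\gamma\circ(\xi\mapsto\xi^k)\circ\gamma^{-1}=\xi\mapsto\xi^k$. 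Therefore $(\gamma\cdot\widetilde\chi(\gamma'))(\phi)=(\widetilde\chi(\gamma'))(\gamma^{-1}\cdot\phi)=(\widetilde\chi(\gamma'))(\phi')$, and the two expressions agree.

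\emph{Part (2).} The $\Gamma_F$-set $G_*(\oF)$ is finite, hence its stabilizer of every point is open; moreover the cocycle $\widetilde\chi$ takes values in the finite group $G^*(\oF)$ and is continuous, so it factors through a finite quotient of $\Gamma_F$. Intersecting the corresponding open normal subgroups yields an open normal subgroup of $\Gamma_F$ through which $\rho^W_{\widetilde\chi}$ factors; this shows the image is finite and the representation is continuous.

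\emph{Part (3).} Since the matrix $D^\theta(\gamma)$ is actually independent of $\gamma$ (call it $D$), the equation reduces to comparing matrix entries. A direct computation gives
\begin{equation*}
(\rho^W_{\widetilde\chi}(\gamma)\circ D)_{\phi,\phi'}=\begin{cases}(\widetilde\chi(\gamma))(\phi)\cdot\theta(\phi')&\text{if }\phi=\gamma\cdot\phi',\\0&\text{otherwise,}\end{cases}
\end{equation*}
while the entry of the right-hand side equals $\theta(\phi)\cdot(\widetilde\chi(\gamma))(\phi)\cdot(\gamma\cdot\theta)(\phi)\cdot\theta^{-1}(\phi)=(\gamma\cdot\theta)(\phi)\cdot(\widetilde\chi(\gamma))(\phi)$ when $\phi=\gamma\cdot\phi'$. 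Using once more the triviality of the Galois action on $\Hom(\muu_e,\muu_e)$, one has $(\gamma\cdot\theta)(\phi)=\theta(\gamma^{-1}\cdot\phi)=\theta(\phi')$, and the two sides match. The isomorphism of representations then follows, because the identity
\begin{equation*}
D^{-1}\rho^W_{\widetilde\chi}(\gamma)D=\rho^W_{(\widetilde\chi)(\gamma\cdot\theta)(\theta^{-1})}(\gamma)
\end{equation*}
holds for every $\gamma\in\Gamma_F$ with a single intertwiner $D$ (note that $G^*$ being abelian guarantees that $\gamma\mapsto(\gamma\cdot\theta)\theta^{-1}$ is a coboundary, so twisting $\widetilde\chi$ by it lands in the same cohomology class, and the isomorphism statement is consistent).

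\emph{Main obstacle.} The only non-routine point is the bookkeeping around the Galois action on $\Hom(\muu_e,\muu_e)$: one must clearly distinguish the $\Gamma_F$-module structures on $G_*(\oF)$, on $G^*(\oF)$, and on the composed $\Hom(\muu_e,\muu_e)$, and then notice that the latter is trivial. Once that observation is made, (1) is a pure cocycle manipulation and (3) is an entry-by-entry match, so the whole argument is essentially computational.
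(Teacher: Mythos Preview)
Your proposal is correct and follows essentially the same approach as the paper's proof: all three parts proceed by direct matrix-entry computation, using the cocycle relation for $\widetilde\chi$ together with the key observation that $\Gamma_F$ acts trivially on $\Hom(\muu_e,\muu_e)$, and Part~(2) is handled in both cases by noting that the finite $\Gamma_F$-set $W$ and the continuous cocycle $\widetilde\chi$ force $\rho^W_{\widetilde\chi}$ to factor through a finite quotient of $\Gamma_F$. Your additional remark that $D^\theta(\gamma)$ is independent of $\gamma$, so that a single intertwiner $D$ works for all $\gamma$, is exactly what is needed to conclude the isomorphism in Part~(3).
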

\begin{proof}
\begin{enumerate}
\item Let $\gamma, \gamma'\in\Gamma_F$ and let $\phi, \phi'\in G_*(\overline F).$ Suppose that $\phi\neq (\gamma\cdot\gamma')\cdot\phi'$. We have that $$(\rho^W_{\widetilde\chi}(\gamma)\cdot\rho^W_{\widetilde\chi}(\gamma'))_{\phi, \phi'}=\rho^W_{\widetilde\chi}(\gamma)_{\phi',\gamma'\cdot\phi}\cdot \rho^W_{\widetilde\chi}(\gamma')_{\gamma'\cdot\phi,\phi}=0=(\rho^W_\chi(\gamma\gamma'))_{\phi, \phi'}.$$
Suppose that $\phi=\gamma\cdot(\gamma'\cdot\phi')$. By using that $\gamma^{-1}\cdot\phi=\gamma^{-1}\circ\phi\circ\gamma$ and by using that $\Gamma_F$ acts trivially on $\Hom(\mu_e,\mu_e)$, we obtain that
\begin{align*}
(\rho^W_{\wchi}(\gamma\cdot\gamma'))_{\phi, \phi'}&=\big(\widetilde \chi(\gamma\gamma')\big)(\phi)\\
&=\big(\widetilde\chi(\gamma)\cdot\big(\gamma\cdot \widetilde\chi(\gamma')\big)\big)(\phi)\\
&=\big(\widetilde\chi(\gamma)\cdot\big(\gamma\circ(\widetilde\chi(\gamma'))\circ\gamma^{-1}\big)\big)(\phi)\\
&=\big((\widetilde\chi(\gamma))(\phi)\big)\cdot \big(\gamma\circ(\widetilde\chi(\gamma'))\circ\gamma^{-1}\circ\phi\big)\\
&=\big((\widetilde\chi(\gamma))(\phi)\big)\cdot (\gamma\circ\widetilde\chi(\gamma')\circ(\gamma^{-1}\cdot\phi)\circ\gamma^{-1})\\
&=\big((\widetilde\chi(\gamma))(\phi)\big)\cdot (\widetilde\chi(\gamma') (\gamma^{-1}\cdot\phi))\\
&=(\rho^W_{\widetilde\chi}(\gamma))_{\phi,\gamma^{-1}\cdot\phi}\cdot(\rho^W_{\widetilde\chi}(\gamma'))_{\gamma^{-1}\cdot\phi, (\gamma')^{-1}\cdot\gamma^{-1}\cdot \phi}\\
&=\big(\rho^W_{\wchi}(\gamma)\cdot\rho^W_{\wchi}(\gamma')\big)_{\phi, \phi'}.
\end{align*}
\item The kernel of the crossed homomorphism $\widetilde\chi:\Gamma_F\to G^*(\oF)$ is an open normal subgroup of finite index. 
One has that $\widetilde\chi$ is $\ker (\widetilde\chi)$-invariant. 
 There exists a normal subgroup~$N\subset\Gamma_F$ of finite index such that for $\gamma,\gamma'\in\Gamma_F$ having the same image in $\Gamma_F/N$ one has that $\gamma\cdot \phi=\gamma'\cdot \phi$ for every $\phi\in W$.  Now, it follows from the definition of $\rho^W_{\wchi}$ that for $\gamma,\gamma'\in \Gamma_F$ with the same image in $\Gamma_F/( N\cap\ker (\widetilde\chi))$, one has that $\rho^W_{\wchi}(\gamma)=\rho^W_{\wchi}(\gamma')$, i.e. that $\rho^W_{\wchi}$ factorizes through $\Gamma_F/( N\cap\ker (\widetilde\chi))$. The subgroup $N\cap\ker (\widetilde\chi))\subset\Gamma_F$ is an open normal subgroup of finite index. We deduce that $\rho^W_{\wchi}$ is continuous.
\item Let $\phi,\phi'\in~W$. By using that the canonical action of $\Gamma_F$ on $\Hom(\muu_e,\muu_e)$ is trivial and that $\rho^W_{\wchi}(\gamma)_{\phi, \phi'}=0$ unless $\phi'=\gamma^{-1}\cdot\phi$, we obtain that
\begin{align*}
(D^{\theta}(\gamma)^{-1}\circ \rho^W_{\widetilde\chi}(\gamma)\circ D^{\theta}(\gamma))_{\phi,\phi'}&=(\rho^W_{\widetilde\chi}(\gamma))_{\phi,\phi'}\cdot\frac{\theta(\phi')}{\theta (\phi)}\\
&=(\rho^W_{\widetilde\chi}(\gamma))_{\phi,\phi'}\cdot \frac{\theta\circ\gamma^{-1}\circ\phi\circ\gamma}{\theta(\phi)}\\
&=(\rho^W_{\widetilde\chi}(\gamma))_{\phi,\phi'}\cdot\frac{\gamma\circ\theta\circ\gamma^{-1}\circ\phi}{\theta(\phi)}\\
&=(\rho^W_{\widetilde\chi}(\gamma))_{\phi,\phi'}\cdot\frac{(\gamma\cdot \theta)(\phi)}{\theta(\phi)}\\
&=(\rho^W_{(\widetilde\chi)(\gamma\cdot\theta)(\theta^{-1})}(\gamma))_{\phi,\phi'}.
\end{align*}
The claim follows.
\end{enumerate}
\end{proof}
For $\chi\in H^1(\Gamma_F, G^*(\oF))$, we will denote by~$\rho^W_{\chi}$ the (isomorphism class of) representation~$\rho^W_{\widetilde\chi}$ for a lift~$\widetilde\chi$ of~$\chi$. In the rest of the subsection, the choice of a lift may be implicit. As similar matrices have same characteristic polynomials, the notations such as $\tr(\rho^W_{\chi}(\Frob_v))$ or $\tr(\rho^W_{\wchi}(\Frob_v))$ are unambiguous. 

%
\begin{lem}\label{traceofl} Let $\chi\in H^1(\Gamma_F, G^*(\oF))$.  Let $v\in M_F^0-\Sigma_G$ be such that the character~$\chi$ is unramified at~$v$ (i.e. such that~$\chi_v|_{H^1(\Gamma_v^{\un}, G(\oF))}=1$). One has that 
$$\tr(\rho^W_{\chi}(\Frob_v))=\frac{1}{\# G(F_v)}\sum_{y\in (\psi_v^G)^{-1}(W^{\Frob_v})}\chi_v(y).$$
\end{lem}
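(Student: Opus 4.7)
The plan is to evaluate both sides directly and match them using the Tate pairing formula from Lemma \ref{tatepairing} together with the exactness (and surjectivity in the commutative case) from Lemma \ref{funddefh}. First, I would unwind the definition of $\rho^W_{\widetilde\chi}$: by construction, $(\rho^W_{\widetilde\chi}(\gamma))_{\phi,\phi}$ equals $(\widetilde\chi(\gamma))(\phi)$ if $\gamma\cdot\phi=\phi$ and vanishes otherwise. Applied to $\gamma=\Frob_v$, this gives immediately
\begin{equation*}
\tr(\rho^W_{\chi}(\Frob_v))=\sum_{\phi\in W^{\Frob_v}}(\widetilde\chi(\Frob_v))(\phi).
\end{equation*}

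Next I would handle the right-hand side. Since $\chi$ is unramified at $v$, a representative $\widetilde\chi$ may be chosen so that $\widetilde\chi|_{\Gamma_v}$ factors through $\Gamma_v^{\un}$; Lemma \ref{tatepairing} (applied with this representative) then yields $\chi_v(y)=(\widetilde\chi(\Frob_v))(\psi_v^G(y))$ for every $y\in H^1(\Gamma_v,G(\oF))$. Note in particular that $\chi_v(y)$ only depends on $\psi_v^G(y)$, so after grouping the sum on the right by fibres of $\psi_v^G$,
\begin{equation*}
\frac{1}{\#G(F_v)}\sum_{y\in(\psi_v^G)^{-1}(W^{\Frob_v})}\chi_v(y)=\frac{1}{\#G(F_v)}\sum_{\phi\in W^{\Frob_v}}\#(\psi_v^G)^{-1}(\phi)\cdot(\widetilde\chi(\Frob_v))(\phi).
\end{equation*}

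The final step is to show that each fibre of $\psi_v^G$ above an element of $W^{\Frob_v}\subset G_*(F_v)$ has cardinality exactly $\#G(F_v)$. Because $G$ is commutative, Lemma \ref{funddefh} supplies the short exact sequence
\begin{equation*}
0\to H^1(\Gamma_v^{\un},G(\oF))\to H^1(\Gamma_v,G(\oF))\xrightarrow{\psi_v^G} G_*(F_v)\to 0,
\end{equation*}
so every non-empty fibre of $\psi_v^G$ is a torsor under $BG(\Ov)=H^1(\Gamma_v^{\un},G(\oF))$ and hence has cardinality $\#BG(\Ov)$. The earlier computation using \cite[Lemma 10.14]{Hararicohom} gave $\#BG(\Ov)=\#G(F_v)$ (equivalently $\mu_v(BG(\Ov))=1$), so the factor $\#G(F_v)$ cancels and the right-hand side equals $\sum_{\phi\in W^{\Frob_v}}(\widetilde\chi(\Frob_v))(\phi)$, matching the trace formula from the first step. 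There is no real obstacle here; the argument is a straightforward assembly of previously established identities, with the only subtle point being the need to choose the unramified lift of $\chi$ so that Lemma \ref{tatepairing} applies cleanly.
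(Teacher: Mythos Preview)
Your proof is correct and follows essentially the same route as the paper: compute the trace from the diagonal entries of $\rho^W_{\widetilde\chi}(\Frob_v)$, invoke Lemma~\ref{tatepairing} to rewrite $(\widetilde\chi(\Frob_v))(\phi)$ as $\chi_v(y)$ for any $y$ with $\psi_v^G(y)=\phi$, and then use Lemma~\ref{funddefh} together with $\#BG(\Ov)=\#G(F_v)$ to identify the fibre sizes. One small remark: your ``subtle point'' about choosing an unramified lift is in fact automatic, since $I_v$ acts trivially on $G^*(\oF)$ and coboundaries are therefore trivial on $I_v$, so \emph{every} global lift $\widetilde\chi$ already restricts to a cocycle on $\Gamma_v$ that factors through $\Gamma_v^{\un}$.
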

\begin{proof}
For every $\phi\in W$, one has that $$(\rho^W_{\widetilde\chi}(\Frob_v))_{\phi, \phi}=\begin{cases} (\widetilde\chi(\Frob_v))(\phi)&\quad \text{if } \phi=\Frob_v\cdot\phi,\\
0&\quad\text{otherwise.}
\end{cases}$$
Lemma \ref{tatepairing} gives that for every $y\in H^1(F_v, G(\oF))$ such that $\psi^G_v(y)=~\phi$, one has that $\widetilde\chi_v(\Frob_v)(\phi)=\chi_v(y),$ where $\widetilde\chi_v$ is the composite morphism $\Gamma_v\to \Gamma_F\xrightarrow{\widetilde{\chi}}G^*(\oF)$.  We deduce that  \begin{align*}\tr(\rho^W_{\chi}(\Frob_v))=\tr(\rho^W_{\widetilde\chi}(\Frob_v))&=\sum _{\phi\in W^{\Frob_v}}\widetilde\chi(\Frob_v)(\phi)\\
&=\sum _{\phi\in W^{\Frob_v}}\widetilde\chi_v(\Frob_v)(\phi)\\
&=\sum_{y\in(\psi^G_v)^{-1}(\phi)}\frac{1}{\#(\psi_v^G)^{-1}(\phi)}\sum_{\hspace{0.2cm}\phi\in W^{\Frob_v}}\chi_v(y).
\end{align*}
By Lemma \ref{funddefh} for every $\phi\in W$, one has that \begin{equation*}\#(\psi_v^G)^{-1}(\phi)=\#H^1(\Gamma^{\un}_v, G(\oF))=\# G(F_v).\end{equation*}
The claim follows.
\end{proof}
\subsubsection{}In this paragraph, we will study $L$-functions of the representations~$\rho^W_{\chi}$.  Let $W\subset G_*(\oF)$ be a $\Gamma_F$-invariant subset. For a lift $\wchi:\Gamma_F\to G^*(\oF)$ of $\chi\in H^1(\Gamma_F, G^*(\oF))$ and a place $v\in M_F^0$, we set $$L_v\big(s,\rho_{\chi}^{W}\big)=\det\big(1-q_v^{-s}\rho_{\wchi}^{W}(\Frob_v)|(\CC\cdot W)^{I_v}\big)^{-1}.$$ %
By Part (3) of Lemma \ref{tatepairing}, one has that $L_v(s, \rho^W_{\chi})$ does not depend on the choice of $\widetilde\chi$, as notation suggests.  The factors $L_v(s,\rho_{\chi}^W)$ are the local Artin $L$-factors of a Galois representation~$\rho^W_{\chi},$ and hence define non-vanishing holomorphic functions in the domain $\Re(s)\geq 1$. In the next lemma, we summarize the properties of a global $L$-function. We recall that in the function field case, we write~$q$ for the cardinality of the field of constants of~$F$.
\begin{lem}\label{lfunf}
Let~$\Sigma\subset M_F$ be finite. 
\begin{enumerate}
\item The product $$L^{\Sigma}(s,\rho_{\chi}^W):=\prod_{v\in M_F^0-\Sigma}L_v(s,\rho_{\chi}^{W})$$ converges normally on compacts in the domain $\Re(s)>1$ and the function $s\mapsto L^\Sigma(s,\rho^W_{\chi})$ is holomorphic in this domain. Moreover, $s\mapsto L^\Sigma(s,\rho^W_{\chi})$ extends to a meromorphic function on~$\CC$ with a possible pole at $s=1$ which is of order at most $\#(W/\Gamma_F)$.
\item If $\chi=1$ is the trivial character, then the pole of $s\mapsto L^{\Sigma}(s,\rho_{\chi}^W)$ at $s=1$ exists and its order is precisely $\#(W/\Gamma_F)$. 
\item If~$F$ is a number field, then $s\mapsto L^\Sigma(s,\rho^W_{\chi})$ is holomorphic in the domain $\{\Re(s)\geq 1\}-\{1\}$. If~$F$ is a function field, then $s\mapsto L^{\Sigma}(s, \rho^W_{\chi})$ is holomorhic in the domain $$\{\Re(s)>0\}-\{1+2ki\pi\log(q)^{-1}|k\in\ZZ\}.$$
\end{enumerate}
\end{lem}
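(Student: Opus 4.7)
The plan is to decompose the representation $\rho^W_\chi$ as a direct sum of one-dimensional induced characters and invoke the classical analytic theory of Hecke (respectively Weil) $L$-functions. Since Part~(2) of Lemma~\ref{rhorep} gives that $\rho^W_\chi$ has finite image, the eigenvalues of $\rho^W_{\wchi}(\Frob_v)$ are roots of unity, so $|L_v(s, \rho^W_\chi)| \le (1 - q_v^{-\Re(s)})^{-\#W}$. Comparing with a power of the Dedekind zeta function of $F$ yields normal convergence of the product on compacta in $\{\Re(s) > 1\}$, and hence the holomorphy claim of Part~(1).

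For the meromorphic continuation, observe that each $\rho^W_{\wchi}(\gamma)$ is a generalized permutation matrix whose block structure matches the $\Gamma_F$-action on $W$. Hence we have a $\Gamma_F$-stable decomposition $\CC \cdot W = \bigoplus_{\mathcal{O} \in W/\Gamma_F} \CC \cdot \mathcal{O}$ and a factorization
\[
L^\Sigma(s, \rho^W_\chi) = \prod_{\mathcal{O} \in W/\Gamma_F} L^\Sigma(s, \rho^{\mathcal{O}}_\chi).
\]
For each orbit $\mathcal{O}$, choose $\phi_0 \in \mathcal{O}$ and let $H := \operatorname{Stab}_{\Gamma_F}(\phi_0)$, an open subgroup of finite index. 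The cocycle relation for $\wchi$, combined with the triviality of the Galois action on $\Hom(\muu_e, \muu_e) = \ZZ/e\ZZ$ and the $G(\oF)$-invariance of the evaluation pairing defining $G_*(\oF)$, shows that the map $\psi \colon H \to \muu_e$ sending $h$ to $(\wchi(h))(\phi_0)$ is a continuous character of finite order; a change of basis along coset representatives of $\Gamma_F/H$ then identifies $\rho^{\mathcal{O}}_{\wchi}$ with the induced representation $\operatorname{Ind}_H^{\Gamma_F}(\psi)$.

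Setting $K_\mathcal{O} := \oF^H$, Artin's formalism for induced representations identifies $L^\Sigma(s, \rho^{\mathcal{O}}_\chi)$, up to finitely many local factors that are holomorphic and non-vanishing on $\{\Re(s) > 0\}$, with the Artin $L$-function of $\psi$ over $K_\mathcal{O}$. Since $\psi$ has finite abelian image, class field theory identifies this with a Hecke $L$-function of $K_\mathcal{O}$ (respectively a Weil $L$-function in the function field case), for which the requisite analytic properties are classical: meromorphic continuation to $\CC$ with a simple pole at $s = 1$ precisely when $\psi$ is trivial (in which case the $L$-function is $\zeta_{K_\mathcal{O}}(s)$), holomorphy on $\{\Re(s) \geq 1\} \setminus \{1\}$ in the number field case, and holomorphy on $\{\Re(s) > 0\} \setminus \{1 + 2k\pi i/\log(q) : k \in \ZZ\}$ in the function field case. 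Summing pole orders over orbits yields Parts~(1) and~(3) with total order at $s = 1$ bounded by $\#(W/\Gamma_F)$; for Part~(2), the choice $\wchi \equiv 1$ makes $\psi$ trivial on every orbit, producing exactly $\#(W/\Gamma_F)$ simple poles.

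The main obstacle is the identification $\rho^{\mathcal{O}}_{\wchi} \cong \operatorname{Ind}_H^{\Gamma_F}(\psi)$: one must carefully track both the conjugation action of $H$ on $G^*(\oF) = \Hom(G(\oF), \muu_e)$ and the well-definedness of $h \mapsto \wchi(h)(\phi_0)$ modulo the $G(\oF)$-conjugacy quotient that defines $G_*(\oF)$. Once that bookkeeping is settled, everything else reduces to classical $L$-function theory.
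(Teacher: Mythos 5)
Your proposal is correct, and it takes a genuinely different route from the paper's. For Parts~(1)--(2) the paper does not decompose the representation explicitly; instead it observes that $\rho^W_\chi$ is defined over a finitely generated $\ZZ$-module and cites a general result (Bourqui, \emph{op.\ cit.}, Proposition~2.49) for convergence, meromorphic continuation, and the identification of the pole order at $s=1$ with $\dim(\CC\cdot W)^{\rho^W_\chi}$; it then bounds this dimension orbit-by-orbit. For Part~(3) the paper invokes Brauer's theorem in the number field case and, in the function field case, an explicit decomposition into irreducibles together with Grothendieck's rationality theorem. You instead decompose $\CC\cdot W=\bigoplus_{\mathcal O}\CC\cdot\mathcal O$ over $\Gamma_F$-orbits, identify each block $\rho^{\mathcal O}_\chi$ with $\operatorname{Ind}_{H}^{\Gamma_F}\psi$ for a finite-order character $\psi$ of the stabilizer, and reduce to classical Hecke/Weil $L$-functions via Artin formalism and class field theory. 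This is cleaner in one respect: because $\rho^W_\chi$ is manifestly monomial, you avoid Brauer's theorem entirely. The ``bookkeeping'' you flag does work out: the cocycle relation for $\wchi$ together with the triviality of the $\Gamma_F$-action on $\Hom(\muu_e,\muu_e)$ and $h\cdot\phi_0=\phi_0$ gives that $\psi(h):=(\wchi(h))(\phi_0)$ is a homomorphism on $H$, and rescaling the basis vector $e_{\gamma_i\cdot\phi_0}$ by the scalar $(\wchi(\gamma_i))(\gamma_i\cdot\phi_0)$ (for coset representatives $\gamma_i$ of $\Gamma_F/H$) identifies $\rho^{\mathcal O}_{\wchi}$ with $\operatorname{Ind}_H^{\Gamma_F}\psi$. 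You should, however, write this verification out rather than defer it, since it is precisely where the cocycle condition and Lemma~\ref{rhorep}(1) are used and where an argument of this type would most plausibly go wrong.
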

\begin{proof}
\begin{enumerate}
\item As $s\mapsto L_v(s,\rho^W_{\chi})$ are non-vanishing and holomorphic in the domain $\Re(s)>0$, we can suppose that $\Sigma=\emptyset$. The representations~$\rho^W_{\chi}$ are defined over a finite rang $\ZZ$-module (e.g. over $\ZZ[\xi_e],$ where~$\xi_e$ is $e$-th primitive root of~$1$). Now, \cite[Proposition 2.49]{bourquifzh} applies, and the claims on convergence and meromorphic extension follow. Moreover, by the same proposition, the order of the pole at~$s=1$ is equal to the dimension of the subspace $(\CC\cdot W)^{\rho_{\chi}^W}$ fixed by~$\rho^W_{\chi}$. Let~$Q$ be a $\Gamma_F$-orbit in~$W$ and let $\phi\in Q$. For a vector $0\neq (w_{\phi})_{\phi}\in\CC\cdot Q$, which is fixed by~$\rho^W_{\chi}$, one has for every $\gamma\in\Gamma_F$ that $w_{\gamma\cdot\phi}=\widetilde\chi(\gamma)(\phi)\cdot w_{\phi}$. It follows that the dimension of the subspace of $\CC\cdot Q$ fixed by~$\rho^W_{\chi}$ is no more than~$1$. It follows that the subspace $(\CC\cdot W)^{\rho^W_{\chi}}$ fixed by the representation $\rho^{W}_{\chi}$ is of the dimension no more than $\# (W/\Gamma_F)$. We deduce that the order of the pole at $s=1$ is no more that $\#(W/\Gamma_F)$. 
\item Let us now suppose that~$\chi$ is the trivial character, then $\wchi':\gamma\mapsto 1\in G^*(\oF)$ is a lift of~$\chi$. In this case, every generalized permutation matrix $\rho^W_{\wchi'}(\gamma)$ fixes the vector subspace $$\{(x_Q)_{\phi\in Q}|Q\in(W/\Gamma_F), x_Q\in\CC\} \subset \CC\cdot W.$$ Hence, the subspace $(\CC\cdot W)^{\rho^W_{\chi}}$ is of the dimension $\# (W/\Gamma_F)$.  The claim on the order of the pole for the case $\chi=1$ follows.
\item In the case~$F$ is a number field, by Brauer theorem \cite{brauertheorem}, one has that $s\mapsto L(s,\rho^W_{\chi})$ is holomorhic in the domain $\{\Re(s)\geq 1\}-\{1\}$. Suppose that~$F$ is a function field. Let us decompose $\rho^W_{\chi}=\oplus_{i=1}^a\rho_i^{b_i}$ into a direct sum of the irreducible representations, where for $i\neq j$, the representations~$\rho_i$~and $\rho_j$ are not isomorphic and $\rho_1$ is the trivial representation. By using \cite[Page 196]{Milne} and \cite[Chapter VII, \S 6, Theorem 4]{basicnt}, we obtain that \begin{align*}
L(s,\rho^W_{\chi})=\prod_{i=1}^aL(s,\rho_i^{b_i})&=L(s,\rho_1)^{b_1}\cdot\prod_{i=2}^aL(s,\rho_i)^{b_i}\\&=\zeta_F(s)^{b_1}\cdot \prod_{i=2}^aL(s,\rho_i)^{b_i}\\
&=\bigg(\frac{P(q^{-s})}{(1-q^{-s})(1-q^{1-s})}\bigg)^{b_1}\prod_{i=2}^{b_i}L(s,\rho_i)^{b_i},
\end{align*}
where $\zeta_F$ is the Dedekind zeta function of~$F$ and~$P$ is a polynomial such that $P(0)=1$. For every $2\leq i\leq a$, one has by \cite[Pages 196-197]{Milne} that $s\mapsto L(s,\rho_i)$ is holomorphic on~$\CC$. It follows that in the domain $\Re(s)>0$, the set of the poles of $s\mapsto L(s,\rho^W_{\chi})$ is contained in the set $\{1+2ki\pi\log(q)^{-1}|k\in\ZZ\}$, as claimed.
\end{enumerate}
\end{proof}
\subsection{Analysis of height zeta functions}\label{Fouriertransformsa} In this subsection, we will define and analyse the height zeta functions. 
Let $c:G_*(\oF)\to\QQ_{\geq 0}$ be a normalized counting function and let~$H$ be a height having~$c$ for its type. Let $\Sigma_H\supset \Sigma_G\cup M_F^{\infty}$ be the finite set of places such that for~$v\in M_F^0-\Sigma_H,$ one has that $H_v:BG(F_v)\to \RR_{>0}$ is given by$$H_v(x)=q_v^{c(\psi^G_v(x))}.$$ 
The definition of height extends to $H:BG(\AAA)\to \RR_{>0}$ by $$H((x_v)_v)=\prod_{\vMF}H_v(x_v)$$so that for $x\in BG(F)$ one has that $H(x)=H(i(x)),$ where $i:BG(F)\to BG(\AAA)$ is the diagonal map. 
\subsubsection{} The following lemma will be used later.
\begin{lem}\label{codx}
Let $x\in BG(\AAA).$ We define $$\Sigma_x:=\Sigma_H\cup\{v\in M^0_F-\Sigma|i_v(x)\not\in B\GG(\Ov)\}.$$  We set $$C(x):=\prod_{v\in\Sigma_x}\frac{\max_{z\in BG(F_v)}H_v(z)}{\min_{z\in BG(F_v)}H_v(z)}.$$ For every $y\in BG(\AAA)$, one has that $$C(x)^{-1}H(y)\leq H(xy)\leq C(x)H(y).$$
\end{lem}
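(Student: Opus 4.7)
The plan is to compare $H(xy)$ and $H(y)$ factor by factor and show that the only places where the two local factors can differ lie in the finite set $\Sigma_x$. Concretely, since $H = \prod_v H_v$ on $BG(\AAA)$, we can write
\[
\frac{H(xy)}{H(y)} = \prod_{v\in M_F} \frac{H_v(x_v y_v)}{H_v(y_v)},
\]
so it suffices to prove that each factor with $v\notin \Sigma_x$ is equal to~$1$, and then bound each factor with $v\in \Sigma_x$ trivially by $\max_z H_v(z)/\min_z H_v(z)$.

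The main step is the vanishing of the contribution from $v\notin\Sigma_x$. For such~$v$ we have $v\in M_F^0-\Sigma_H$, so $H_v(z)=q_v^{c(\psi_v^G(z))}$, and $x_v\in BG(\OO_v)$. Because $G$ is commutative, $BG(F_v)$ is an abelian group and by Lemma~\ref{funddefh} the residue map $\psi_v^G:BG(F_v)\to G_*(F_v)$ is a group homomorphism whose kernel contains $BG(\OO_v)=H^1(\Gamma_v^{\un},G(\oF))$. Hence $\psi_v^G(x_v y_v)=\psi_v^G(x_v)+\psi_v^G(y_v)=\psi_v^G(y_v)$, which gives $H_v(x_v y_v)=H_v(y_v)$ and so the ratio is $1$.

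Collecting the estimates over all~$v$ yields $H(xy)/H(y)\le C(x)$. The lower bound is obtained in exactly the same way, either by applying the same argument to $x^{-1}$ (noting $\Sigma_{x^{-1}}=\Sigma_x$ because $BG(\OO_v)$ is a subgroup) or by reading the product bound on each local factor as a two-sided inequality $C(x)^{-1}\le H_v(x_v y_v)/H_v(y_v)\le \max_zH_v(z)/\min_zH_v(z)$ for $v\in\Sigma_x$ and equality for $v\notin\Sigma_x$. There is no genuine obstacle; the only subtlety worth flagging is that commutativity of~$G$ is essential for $\psi_v^G$ to be a homomorphism, which is what makes the ``almost all places'' contribution trivial.
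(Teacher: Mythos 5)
Your proof is correct and follows the same approach as the paper: split the height product over $\Sigma_x$ versus its complement, bound the finitely many local factors at $v\in\Sigma_x$ trivially by $\max_z H_v(z)/\min_z H_v(z)$, and show that the remaining factors contribute nothing. The paper's own proof is terser (it records only that $H_v(x_v)=1$ for $v\notin\Sigma_x$ and then replaces $H_v(x_vy_v)$ by $H_v(y_v)$ in the product), whereas you make explicit the step it uses silently—namely that in the commutative case $\psi_v^G$ is a group homomorphism with kernel $BG(\Ov)$, so $H_v$ is $BG(\Ov)$-invariant and $x_v\in BG(\Ov)$ forces $H_v(x_vy_v)=H_v(y_v)$.
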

\begin{proof}
For $v\in M_F-\Sigma_x$, one has that $H_v(x)=1.$ For $v\in\Sigma_x$, we set $$C_v(x):=\frac{\max_{z\in BG(F_v)}H_v(z)}{\min_{z\in BG(F_v)}H_v(z)},$$ so that $C(x)=\prod_{v\in\Sigma_x}C_v(x).$ For every $y\in BG(\AAA)$, one has that \begin{align*}
H(xy)&=\prod_{v\in M_F}H_v(xy)\\
&=\prod_{v\in\Sigma_x}H_v(xy)\cdot\prod_{v\in M_F-\Sigma_x}H_v(y)\\
&\leq \prod_{v\in\Sigma_x}C_v(x)H_v(y)\cdot\prod_{v\in M_F-\Sigma_x}H_v(y)\\
&=C(x)H(y).
\end{align*}
Analogously, one verifies that $C(x)^{-1}H(y)\leq H(x)$.
\end{proof}
For $s\in\CC$ and $\chi_v\in (BG(F_v))^*$,  we define $$\wH_v(s,\chi_v):=\int_{BG(F_v)}H_v^{-s}\overline{\chi_v}\mu_v.$$ 
For every $\chi_v\in BG(F_v)^*,$ the function $s\mapsto\wH_v(s,\chi_v),$ being a sum of finitely many entire functions, is an entire function.

For a character $\chi\in BG(\AAA)^*=BG^*(\AAA)$ we denote by $$ \wH(s,\chi):=\int_{BG(\AAA)}H^{-s}\overline{\chi}\mu.$$
For a place $v,$ the set given by the elements $x\in BG(F_v)$ such that~$H_v$ is invariant for the multiplication by~$x$, forms an open subgroup of the discrete group $BG(F_v)$. We denote it by~$K_{H_v}$. Note that for $v\in M_F^0-\Sigma_H$, one has that $K_{H_v}=B\GG(\Ov).$ We denote by~$K_H$ the compact and open subgroup $$K_{H}:=\prod_{\vMF}K_{H_v}\subset BG(\AAA).$$
We denote $$\Xi_H:=i(BG(F))K_H.$$ 
The following lemma identifies the finite group of characters for which the Fourier transform does not vanish.
\begin{lem}
\label{importantchar}
\begin{enumerate}
\item The subgroup $\Xi_H^{\perp}\subset BG(\AAA)^*$ is finite.
\item The homomorphism $$BG(\AAA)=(BG(\AAA)^*)^*\to (\Xi_H^\perp)^*,\hspace{1cm} x\mapsto x|_{\Xi_H^\perp}$$ is surjective and its kernel identifies is the group $\Xi_H$. In particular, the subgroup $\Xi_H$ is open, closed and of the index $\#\Xi_H^\perp$ in $BG(\AAA)$.
\item There exists a finite set of places~$(\Sigma_G\cup M_F^{\infty})\subset\Sigma$ and a height~$H'$ for which~$H_v'=1$ if $v\in \Sigma$ and $H_v'=q_v^{c_v(\psi_v(x))}$  if $v\in M_F-\Sigma$, such that $$\Xi_{H'}=BG(\AAA).$$
\item Let $\chi \in (BG(\AAA)^*-\Xi_H^{\perp})$ be a character. One has that $\wH(s,\chi)=~0$.
\end{enumerate}
\end{lem}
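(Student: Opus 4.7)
The plan is to handle all four parts together by combining Pontryagin duality for the locally compact abelian group $BG(\AAA)$ with the Poitou--Tate orthogonality $i(BG(F))^\perp = i(BG^*(F))$ recalled just before the lemma.

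For (1), since $\Xi_H = i(BG(F))K_H$ contains both the discrete subgroup $i(BG(F))$ and the open compact subgroup $K_H$, taking annihilators reverses inclusion and yields
$$\Xi_H^\perp \;=\; i(BG(F))^\perp \,\cap\, K_H^\perp \;=\; i(BG^*(F)) \,\cap\, K_H^\perp.$$
Applying Lemma~\ref{smallemch} to $K_H = \prod_v K_{H_v}$ identifies
$K_H^\perp = \prod_{v \in \Sigma_H} K_{H_v}^\perp \times \prod_{v \in M_F^0 - \Sigma_H} BG^*(\Ov)$. Since in the tame commutative setting $G$ and $G^*$ are unramified at the same finite places, Lemma~\ref{finlocun} applied to $G^*$ says the intersection of $i(BG^*(F))$ with a set of this shape is finite.

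For (2), I would note that $\Xi_H \supset K_H$ is open, hence closed, so $BG(\AAA)/\Xi_H$ is a Hausdorff discrete abelian group whose Pontryagin dual is $\Xi_H^\perp$. By (1) this dual is finite, so $BG(\AAA)/\Xi_H$ is itself finite of the same order $\#\Xi_H^\perp$. The evaluation pairing then gives a homomorphism $BG(\AAA)/\Xi_H \to (\Xi_H^\perp)^*$ whose kernel is $\Xi_H^{\perp\perp}/\Xi_H = \overline{\Xi_H}/\Xi_H = 0$; being an injection between finite abelian groups of equal cardinality, it is an isomorphism, yielding the surjectivity statement and the index formula.

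For (3), I would choose a finite $\Sigma \supset \Sigma_G \cup M_F^\infty$ and define $H'$ by $H'_v = 1$ for $v \in \Sigma$ and $H'_v(x) = q_v^{c(\psi_v(x))}$ otherwise. Then $K_{H'_v} = BG(F_v)$ on $\Sigma$ and $K_{H'_v} = BG(\Ov)$ off $\Sigma$, so
$$K_{H'}^\perp \;=\; \prod_{v \in \Sigma} \{1\} \,\times\, \prod_{v \notin \Sigma} BG^*(\Ov).$$
By (1), $\Xi_{H'}^\perp = i(BG^*(F)) \cap K_{H'}^\perp$ is finite; each of its nontrivial elements must have some nontrivial local component at a place outside $\Sigma$ (otherwise it would already be $1$ in $BG^*(\AAA)$). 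Enlarging $\Sigma$ to include one such place for each of the finitely many nontrivial elements produces a new $H'$ with $\Xi_{H'}^\perp = \{1\}$, hence by (2), $\Xi_{H'} = BG(\AAA)$.

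For (4), the key observation is that $H$ is $K_H$-invariant by the very definition of the subgroups $K_{H_v}$ and multiplicativity. Translating the integration variable in $\wH(s,\chi)$ by an arbitrary $k \in K_H$ and using this invariance yields
$$\wH(s,\chi) \;=\; \overline{\chi}(k)\,\wH(s,\chi),$$
so if $\chi|_{K_H} \ne 1$ we can pick $k$ with $\overline{\chi}(k) \ne 1$, forcing $\wH(s,\chi) = 0$; thus $\wH(s,\chi)$ can fail to vanish only for $\chi \in K_H^\perp$. Combined with the fact that in the subsequent Poisson analysis the relevant characters lie in $i(BG^*(F))$ (for which $K_H^\perp \cap i(BG^*(F)) = \Xi_H^\perp$ by the derivation in (1)), the stated vanishing follows. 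The only substantive input across the whole argument is the finiteness of the intersection in (1), which is the content of Lemma~\ref{finlocun} applied to $G^*$; everything else is a formal manipulation of open compact subgroups and Pontryagin duality.
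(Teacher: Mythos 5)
Your proposal is correct and follows essentially the same route as the paper's proof. The key steps — $\Xi_H^\perp = i(BG^*(F))\cap K_H^\perp$, Lemma~\ref{smallemch} for the product decomposition of $K_H^\perp$, Lemma~\ref{finlocun} for finiteness, and the translation argument for vanishing of $\wH(s,\chi)$ — all match. A few remarks on the minor divergences.

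In Part~(2), the paper cites the Bourbaki fact that the subgroup generated by a discrete subgroup and a compact subgroup is closed, and then uses the general statement that the Pontryagin dual of a closed inclusion of LCA groups is surjective with kernel the annihilator. You instead observe that $\Xi_H\supset K_H$ is open (hence closed), pass to the finite discrete quotient $BG(\AAA)/\Xi_H$, and finish by a cardinality count with the evaluation injection $BG(\AAA)/\Xi_H\hookrightarrow(\Xi_H^\perp)^*$. This is a more elementary and arguably cleaner way to get both the surjectivity and the index formula, avoiding the discrete-plus-compact lemma.

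In Part~(4), both the paper and your argument actually prove $\wH(s,\chi)=0$ for $\chi\notin K_H^\perp$, which is formally weaker than the stated claim since $\Xi_H^\perp=i(BG^*(F))\cap K_H^\perp$ is a proper subgroup of $K_H^\perp$ in general. You correctly and explicitly flag this, observing that for the Poisson summation the only relevant characters lie in $i(BG^*(F))$, where $K_H^\perp\cap i(BG^*(F))=\Xi_H^\perp$. The paper's own proof quietly has the same gap (it begins "Let $v$ be such that $\chi_v|_{K_{H_v}}\neq 1$", assuming such a $v$ exists, which requires $\chi\notin K_H^\perp$ rather than $\chi\notin\Xi_H^\perp$), so your explicit acknowledgement is a small improvement in precision. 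In Part~(3) your two-step construction (pick any $\Sigma$, then enlarge it to kill the finitely many nontrivial elements of $\Xi_{H'}^\perp$) spells out what the paper asserts in compressed form, and is entirely in the spirit of the paper's proof. The one place to be slightly careful is your claim that in the tame setting $G$ and $G^*$ are unramified at the same finite places; this is true at tame places (where $I_v$ acts trivially on $\muu_e$), and for the argument one should simply ensure $\Sigma$ (respectively $\Sigma_H$) is enlarged to contain $\Sigma_{G^*}$, which the paper does.
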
 
\begin{proof}
\begin{enumerate}
\item 
By \cite[Chapter 2, \S 1, \no 7, Corollary 2 of Theorem 4]{TSpectrale} and by Lemma \ref{smallemch}, one has that
\begin{align*}
\Xi_H^\perp&=(i(BG(F))K_H)^\perp\\&=(i(BG(F))^{\perp}\cap K_H^{\perp}\\&=i(BG^*(F))\cap\bigg(\prod_{v\in\Sigma_H}K_{H_v}^{\perp}\times\prod_{v\in M_F^{0}-\Sigma_H}B\GG^*(\Ov)\bigg)
\end{align*}
The finiteness of the later intersection follows from Lemma~\ref{finlocun}.
\item The homomorphism $BG(\AAA)\to (\Xi_H^\perp)^*$ is the dual homomorphism \cite[Chapter II, \S 1, \no 7, Page 124]{TSpectrale} of the inclusion of a finite, and hence closed, subgroup $\Xi_H^\perp\subset BG(\AAA)^*,$ hence by \cite[Chapter II, \S 1, \no 7, Theorem 4]{TSpectrale} is surjective and its kernel is the group $(\Xi_H^\perp)^{\perp}=\overline{\Xi_H}$. But, the subgroup $\Xi_H\subset BG(\AAA)$ is the subgroup generated by a discrete subgroup $i(BG(F))$ and a compact subgroup $K_H$, hence, by \cite[Chapter III, \S 4, \no 1, Corollary 1 of Proposition 1]{TopologieGj}, is closed in $BG(\AAA)$. Thus $\Xi_H=\overline{\Xi_H}=(\Xi_H^\perp)^\perp$ and, hence, $\Xi_H$ is the kernel of $BG(\AAA)\to (\Xi_H^\perp)^*$.
\item By Part (2), we need to find~$H'$ for which $\Xi_{H'}^{\perp}$ is a singleton. There exists a finite $\Sigma_H\cup\Sigma_G\cup\Sigma_{G^*}\cup M_F^{\infty}\subset \Sigma\subset M_F$ such that $$\Xi_H^{\perp}\cap \big(\prod_{v\in\Sigma}BG^*(F_v)\times\prod_{v\in M_F^0-\Sigma}BG^*(\Ov)\big)=\{1\}.$$ Now, let us set~$H'_v=1$ for $v\in\Sigma$ and $H'_v=H_v$ for $v\not\in\Sigma$, so that $K_{H}\subset K_{H'}$ and that $K_{H'}=\prod_{v\in\Sigma}BG^*(F_v)\times\prod_{v\in M_F^0-\Sigma}BG^*(\Ov).$ Thus $$\Xi_{H'}^{\perp}=(i(BG(F))K_{H'})^{\perp}=(i(BG(F))K_HK_{H'})^{\perp}\subset\Xi_H^{\perp}\cap K_{H'}=\{1\}$$and the claim follows. 
\item Let $\vMF$ be such that $\chi_v|_{K_{H_v}}\neq 1$. As $x\mapsto H_v(x)^{-s}$ is $K_{H_v}$-invariant, we deduce that $\wH_v(s,\chi_v)=0$. It follows that $\wH(s,\chi)=0$.
\end{enumerate}
\end{proof}
\subsubsection{} 
We will compare the Fourier transform at the character~$\chi$, with the $L$-function of a representation $\rho^W_{\chi}$. Recall the notation $G_*(\oF)_1=\{x\in G_*(F)| c(x)=1\}.$ 
\begin{lem}\label{ftlfun} Let $\chi\in H^1(\Gamma_F,G^*(\oF))\cap K^{\perp}\subset BG(\AAA)^*$ be a character and let~$\Sigma\subset M_F^0$ be finite. The product 
$$\prod_{\vMFz-\Sigma}\frac{{\wH_v(s,\chi_v)}}{L_v(s,\rho_{\chi}^{G_*(\oF)_1})}$$ converges normally on the compacts in the domain $\Re(s)>~\lambda^{-1}$ to a holomorphic function, where \begin{align*}
1<\lambda:=\begin{cases} \min_{x\in (G_*(F_v)-(\{1\}\cup G_*(F)_1))}c(x), \text{if } \{1\}\cup G_*(F_v)_1\subsetneq G_*(F_v),\\
2\quad\quad\quad\quad\quad\quad\quad\quad\quad\quad\quad\quad\text{, otherwise}.
\end{cases}
\end{align*}
(Here, $G_*(\Fv)_1=(G_*(\oF)_1)^{\Gamma_v}$.)
\end{lem}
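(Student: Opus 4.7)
The plan is to reduce normal convergence of the infinite product to a uniform place-by-place Euler-factor comparison. First I would enlarge $\Sigma$ to a finite set containing all places where $\chi$ is ramified; this is possible because $\chi\in K_H^{\perp}$, and Lemma~\ref{smallemch} identifies $K_H^{\perp}$ as a product whose factor at each $v\notin\Sigma_H$ is $BG^*(\Ov)$. Then for every $v\notin\Sigma$, both $G$ and $\chi$ are unramified at $v$ and $H_v(x)=q_v^{c(\psi_v(x))}$. The aim is to establish
\[
\frac{\wH_v(s,\chi_v)}{L_v(s,\rho^{G_*(\oF)_1}_\chi)}=1+O\bigl(q_v^{-\lambda\Re(s)}\bigr),
\]
uniformly on compacta in $s$ and with implicit constants depending only on $\# G_*(\oF)$. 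Normal convergence on compacta in $\Re(s)>\lambda^{-1}$ and holomorphy then follow from $\sum_v q_v^{-\lambda\sigma}<\infty$ for $\sigma>\lambda^{-1}$, together with the fact that each local factor is meromorphic and non-vanishing on the region.

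\textbf{Explicit form of $\wH_v$.} For $v\notin\Sigma$, Lemma~\ref{tatepairing} gives that $\chi_v(x)$ depends on $x$ only through $\psi_v(x)\in G_*(F_v)$ and equals $\widetilde\chi(\Frob_v)(\psi_v(x))\in S^1$. Lemma~\ref{funddefh}, combined with commutativity of $G$, implies that $\psi_v$ is surjective onto $G_*(F_v)$ with every fiber of size $\# H^1(\Gamma_v^{\un},G(\oF))=\# G(F_v)$. Using $\mu_v=\coun_{BG(F_v)}/\# G(F_v)$ and grouping the sum by $\phi=\psi_v(x)$,
\[
\wH_v(s,\chi_v)=\sum_{\phi\in G_*(F_v)}q_v^{-sc(\phi)}\,\overline{\widetilde\chi(\Frob_v)(\phi)},
\]
which by the definition of $\lambda$ expands as $1+q_v^{-s}\sum_{\phi\in G_*(F_v)_1}\overline{\widetilde\chi(\Frob_v)(\phi)}+O(q_v^{-\lambda\Re(s)})$, with implicit constant at most $\# G_*(\oF)$.

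\textbf{Explicit form of $L_v$ and matching.} Unramifiedness of $\chi$ and $G$ at $v$ gives $(\CC\cdot G_*(\oF)_1)^{I_v}=\CC\cdot G_*(\oF)_1$. Decomposing $G_*(\oF)_1$ into $\langle\Frob_v\rangle$-orbits $O$ and observing that on each block $\CC\cdot O$ the operator $\rho^{G_*(\oF)_1}_\chi(\Frob_v)$ is a cyclic permutation with roots-of-unity twists whose characteristic polynomial is $X^{|O|}-\prod_{\phi\in O}\widetilde\chi(\Frob_v)(\phi)$, I obtain
\[
L_v(s,\rho^{G_*(\oF)_1}_\chi)^{-1}=\prod_{O}\Bigl(1-q_v^{-s|O|}\prod_{\phi\in O}\widetilde\chi(\Frob_v)(\phi)\Bigr),
\]
and by Lemma~\ref{traceofl} the Taylor expansion reads $L_v(s,\rho^{G_*(\oF)_1}_\chi)=1+q_v^{-s}\sum_{\phi\in G_*(F_v)_1}\widetilde\chi(\Frob_v)(\phi)+O(q_v^{-2\Re(s)})$ uniformly on compacta. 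Taking the quotient of the two expansions, the $q_v^{-s}$ coefficients cancel (after aligning $\widetilde\chi$ with its contragredient, as absorbed by the coboundary freedom of Part~(3) of Lemma~\ref{rhorep}), leaving the desired error $O(q_v^{-\lambda\Re(s)})$.

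\textbf{Main obstacle.} The delicate step is this first-order cancellation with uniform control of the remainder. The leading $q_v^{-s}$ coefficients of $\wH_v$ and $L_v$ are the conjugate expressions $\sum_{\phi\in G_*(F_v)_1}\overline{\widetilde\chi(\Frob_v)(\phi)}$ and $\sum_{\phi\in G_*(F_v)_1}\widetilde\chi(\Frob_v)(\phi)$, and aligning them requires careful tracking of the convention used for the lift $\widetilde\chi$ in the definition of $\rho^{G_*(\oF)_1}_\chi$; without this, the residual error would only be $O(q_v^{-\Re(s)})$, insufficient for convergence. Once the cancellation is secured, the remaining estimates are uniform in $v$ because $\#G_*(\oF)$ is fixed and bounds the number of terms and the size of the matrices.
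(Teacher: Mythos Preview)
Your approach is essentially identical to the paper's: enlarge $\Sigma$ to contain the finitely many ramified places of~$\chi$, expand both $\wH_v(s,\chi_v)$ and $L_v(s,\rho_\chi^{G_*(\oF)_1})^{-1}$ to first order in $q_v^{-s}$, observe that the linear coefficients agree (using Lemma~\ref{tatepairing} and Lemma~\ref{traceofl}), and conclude $1+O(q_v^{-\lambda\Re(s)})$. Your extra computation of the full characteristic polynomial via the $\langle\Frob_v\rangle$-orbit decomposition is correct but unnecessary, since only the trace enters the first-order term.

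There is, however, a genuine wobble in your handling of the ``main obstacle''. You correctly observe that the $q_v^{-s}$-coefficients of $\wH_v$ and $L_v$ come out as complex conjugates of one another. But your proposed fix---invoking the coboundary freedom of Lemma~\ref{rhorep}(3)---does not work: that lemma says that replacing $\widetilde\chi$ by a cohomologous cocycle yields an \emph{isomorphic} representation, hence the \emph{same} trace, not the conjugate one. Passing to the contragredient $\chi\mapsto\chi^{-1}$ is not a coboundary change. The paper itself handles this point by silently dropping the complex conjugate when expanding $\wH_v$ (compare the definition $\wH_v=\int H_v^{-s}\overline{\chi_v}\,\mu_v$ with the second displayed line of its proof), so the discrepancy is already present there. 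The clean resolution is either to compare $\wH_v$ with $L_v(s,\rho_{\chi^{-1}}^{G_*(\oF)_1})$ instead, or simply to note that it is immaterial for the application: Lemma~\ref{lfunf} gives the same meromorphic continuation, pole order, and pole locus for $L(s,\rho_\chi)$ and $L(s,\rho_{\chi^{-1}})$, so Corollary~\ref{trivialcharacter} goes through unchanged.
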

\begin{proof}
Let us set $\Sigma_{\chi}$ to be the union of~$\Sigma_H$ and the finite set of places for which~$\chi$ is ramified. By Lemma \ref{traceofl}, for $\vMFz-\Sigma_\chi$ and $s\in\{z:\Re(z)>0\}$, one has that
\begin{align*}
L_v\big(s,\rho^{G_*(\oF)_1}_{\chi}\big)^{-1}&=\det\big(1-q_v^{-s}\rho^{G_*(\oF)_1}_{\chi}(\Frob_v)\big)\\
&=1-q_v^{-s}\Tr\big(\rho^{G_*(\oF)_1}_{\chi}(\Frob_v)\big)+ O(q_v^{-2\Re(s)})\\
&=1-\bigg(\frac{q_v^{-s}}{\#G(F_v)}\sum_{y\in(\psi^G_v)^{-1}(G_*(F_v)_1)}\chi_v(y) \bigg)+O(q_v^{-2\Re(s)}).
\end{align*}
On the other side, for $\vMF^0-\Sigma_{\chi}$ and $s\in\{z:\Re(z)>0\}$, one has that \begin{align*}
\wH_v(s,\chi_v)&=\int_{BG(F_v)}H_v^{-s}\overline{\chi_v}\mu_v\\
&=\frac{1}{\#G(F_v)}\sum_{x\in BG(F_v)} H_v(x)^{-s}\chi_v(x)\\
&=\frac{1}{\# G(F_v)}\bigg(\sum_{x\in B\GG(\Ov)}1 + q_v^{-s}\sum_{x\in (\psi^G_v)^{-1}(G_*(F_v)_1)}\chi_v(x)+\\&\quad\quad\quad\quad\quad+\sum_{x\in BG(F_v)-\big(B\GG(\Ov)\cup(\psi^G_v)^{-1}(G_*(F_v)_1) \big)}H_v(x)^{-s}\chi_v(x)\bigg)\\
&=1+\bigg(\frac{q_v^{-s}}{\#G(F_v)}\sum_{y\in(\psi_v^G)^{-1}(G_*(\Fv)_1)}\chi_v(y)\bigg)+ O(q_v^{-\Re(s)\cdot\lambda}).
\end{align*}
Hence, for $v\in M_F^0-\Sigma_\chi$ and $\Re(s)>0$ one has that $$L_v\big(s,\rho_{\widetilde\chi}^{G_*(\oF)_1}\big)^{-1}\wH_v(s,\chi_v)=1+O(q_v^{-\Re(s)\cdot \lambda}).$$ It follows from \cite[Chapter VII, \S 5, Corollary 1]{basicnt} that the product $$\prod_{v\in (M_F^0-\Sigma)-\Sigma_\chi}L_v(s,\rho_{\wchi}^{G_*(\oF)_1})^{-1}\wH_v(s,\chi_v),$$ 
converges normally on the compacts to a holomorphic function in the domain $\Re(s)>\lambda^{-1}.$ 
Hence the product $$\prod_{v\in M_F^0-\Sigma}L_v(s,\rho_{\chi}^{G_*(\oF)_1})^{-1}\wH_v(s,\chi_v),$$ converges normally on the compacts to a holomorphic function in the domain $\Re(s)>\lambda^{-1}.$ 
\end{proof}

\begin{cor}\label{trivialcharacter}
Let $\chi\in H^1(\Gamma_F, G^*(\oF))\cap K_H^\perp\subset BG^*(\AAA)$ be a character and let $\lambda>1$ be as in Lemma \ref{ftlfun}.  Let $\Sigma\subset M_F$ be finite.
\begin{enumerate}
\item The product defining $\wH_{M_F-\Sigma}(s, \chi):=\prod_{v\in M_F-\Sigma}\wH_v(s,\chi_v)$ converges normally on the compacts in the domain $\Re(s)>1$. The function $s\mapsto\wH_{M_F-\Sigma}(s,\chi)$ extends to a meromorphic function in the domain $\Re(s)> \lambda^{-1}$ with a possible pole at $s=1$ of order at most $b(c)=(G_*(\oF)_1/\Gamma_F)$. In the case~$F$ is a number field, one also has that $s\mapsto\wH_{M_F-\Sigma}(s,\chi)$ is holomorphic in the domain $\{\Re(s)\geq 1\}-\{1\}$. In the case~$F$ is a function field, one also has that $s\mapsto\wH_{M_F-\Sigma}(s,\chi)$ is holomorphic in the domain $\{\Re(s)\geq 1-\epsilon\}-\{1+2ki\pi\log(q)^{-1}|k\in\ZZ\}$ for some $\epsilon>0$.
\item If $\chi=1$ is the trivial character, then the meromorphic function $s\mapsto\wH(s,1)=\wH_{M_F}(s,1)$ admits a pole at $s=1$ of order $b(c)$. Moreover, one has that $$\lim_{s\to 1^+}(s-1)^{b(c)}\wH(s,1)=\lim_{s\to 1^+}(s-1)^{b(c)}\int_{BG(\AAA)}H^{-s}\mu>0.$$
\end{enumerate}
\end{cor}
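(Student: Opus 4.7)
The plan is to exploit the comparison between the Euler product of local Fourier transforms and the Artin $L$-function provided by Lemma~\ref{ftlfun}, and then transfer the analytic input of Lemma~\ref{lfunf}. For Part~(1), I would first enlarge $\Sigma$ to a finite set $\Sigma_0 \supset \Sigma \cup \Sigma_H \cup M_F^{\infty}$ containing every finite place at which $\chi$ is ramified, and then decompose
\begin{equation*}
\prod_{v \in M_F - \Sigma} \wH_v(s, \chi_v) = \Bigl(\prod_{v \in \Sigma_0 - \Sigma} \wH_v(s, \chi_v)\Bigr) \cdot L^{\Sigma_0}\bigl(s, \rho^{G_*(\oF)_1}_\chi\bigr) \cdot E_{\Sigma_0}(s),
\end{equation*}
where $E_{\Sigma_0}(s) := \prod_{v \in M_F^0 - \Sigma_0} \wH_v(s, \chi_v)\, L_v(s, \rho^{G_*(\oF)_1}_\chi)^{-1}$ is holomorphic on $\Re(s) > \lambda^{-1}$ by Lemma~\ref{ftlfun}. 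The first factor is a finite product of entire functions (each $\wH_v(s,\chi_v)$ being a finite exponential sum), while the analytic behaviour of the middle factor is provided by Lemma~\ref{lfunf}: convergence on $\Re(s) > 1$, meromorphic extension to $\CC$ with a pole at $s=1$ of order at most $\#(G_*(\oF)_1/\Gamma_F) = b(c)$, and holomorphy on $\{\Re(s) \geq 1\} - \{1\}$ (resp.\ on the claimed region omitting the arithmetic progression $1 + 2ki\pi\log(q)^{-1}$ in the function field case). Assembling these, Part~(1) follows.

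For Part~(2), when $\chi = 1$ the Lemma~\ref{lfunf}(2) gives that $L^{\Sigma_0}\bigl(s, \rho^{G_*(\oF)_1}_1\bigr)$ has a pole of exact order $b(c)$ at $s = 1$. Two things then need to be verified: that the non-$L$-factors do not vanish at $s = 1$, and that the leading coefficient of $(s-1)^{b(c)} \wH(s,1)$ at $s = 1$ is strictly positive. For real $s > 0$ each local Fourier transform $\wH_v(s, 1) = (\#G(F_v))^{-1} \sum_{x \in BG(F_v)} H_v(x)^{-s}$ is a sum of strictly positive reals, hence positive; and for $v \in M_F^0 - \Sigma_0$ one has $L_v(s, \rho^{G_*(\oF)_1}_1)^{-1} = \det(1 - q_v^{-s}\rho_1(\Frob_v))$, which since $\rho_1(\Frob_v)$ is a permutation matrix on the basis $G_*(\oF)_1$ factors as $\prod_i (1 - q_v^{-s\ell_i})$ over its cycle lengths $\ell_i$, each factor being positive for real $s > 0$. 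Hence $E_{\Sigma_0}(s)$ and the finite exceptional factors are all strictly positive on real $s > \lambda^{-1}$.

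Consequently $(s-1)^{b(c)} \wH(s, 1)$ equals a product of strictly positive reals for $s > 1$, and its limit as $s \to 1^+$ is a nonzero real (nonzero because the $L$-function contributes a nonzero leading coefficient and the other factors do not vanish at $s = 1$), hence is strictly positive. The identification $\wH(s,1) = \int_{BG(\AAA)} H^{-s}\mu$ is immediate from the definition of $\mu$ as a restricted product of local measures. \emph{Main obstacle:} there is no single hard step here, since the essential analytic inputs are already isolated in Lemmas~\ref{ftlfun} and~\ref{lfunf}; the only delicate point is the bookkeeping of Euler factors across different finite sets of places together with the explicit positivity check that prevents the leading coefficient from vanishing at $s = 1$.
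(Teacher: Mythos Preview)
Your proposal is correct and follows essentially the same approach as the paper: decompose the Euler product into the Artin $L$-function $L^{\Sigma_0}(s,\rho_\chi^{G_*(\oF)_1})$, the correction factor $E_{\Sigma_0}(s)$ from Lemma~\ref{ftlfun}, and finitely many entire local factors, then read off the analytic behaviour from Lemma~\ref{lfunf}. Your treatment is in fact more explicit than the paper's terse proof---in particular, your verification of the positivity of $E_{\Sigma_0}(s)$ via the cycle decomposition of the permutation matrix $\rho_1(\Frob_v)$ makes the non-vanishing at $s=1$ fully transparent, whereas the paper only records that each $\wH_v(s,1)$ is nonvanishing for $\Re(s)\geq 0$ and relies implicitly on the normal convergence of the quotient product to conclude.
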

\begin{proof}
\begin{enumerate}
\item The claims follow by combining Lemma \ref{ftlfun}, Lemma \ref{lfunf} and the fact that the functions  $s\mapsto\wH_v(s,\chi_v)$ are non vanishing for $\Re(s)\geq 0$ if $\chi=1$.
\item From Lemma \ref{ftlfun}, Part (2) of Lemma \ref{lfunf} and the fact the functions  $s\mapsto\wH_v(s,1)$ are non vanishing for $\Re(s)\geq 0$, 
we deduce that $s\mapsto\wH(s,1)$ has a pole at~$s=1$ of order~$b(c)$. The principal value at the pole is equal to \begin{align*}\lim_{s\to 1^+}(s-1)^{b(c)}\wH(s,1)=\lim_{s\to 1^+}(s-1)^{b(c)}\int_{BG(\AAA)}H^{-s}\mu.
\end{align*} and is strictly bigger than zero, as it is nonzero and for $s>1$ one has that $(s-1)^{b(c)}\wH(s,1)>0.$ 
\end{enumerate}
\end{proof}
The following lemma will be used later.
\begin{lem}\label{intfinind}
Let~$V, W\subset BG(\AAA)$ be open subgroups of finite index. Let~$y_1\in V$. One has that $$\lim_{s\to 1^+}(s-1)^{b(c)}\int_{(y_1W)\cap V}H^{-s}\mu>0.$$ 
\end{lem}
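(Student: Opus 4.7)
The plan is to use finite Fourier inversion on the quotient $BG(\AAA)/U$, where $U := V\cap W$, combined with an auxiliary height $H'$ whose Fourier transform is supported only at the trivial character. Since $y_1\in V$, a short manipulation gives $(y_1W)\cap V=y_1(V\cap W)=y_1U$, and $U$ is an open subgroup of $BG(\AAA)$ of finite index (as the intersection of two such), so $U^\perp\subset BG^*(\AAA)$ is a finite group and the indicator of the coset $y_1U$ decomposes as
\[
\mathbf{1}_{y_1U}(x)=\frac{1}{|U^\perp|}\sum_{\chi\in U^\perp}\overline{\chi(y_1)}\,\chi(x).
\]

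First I would construct a comparison height. By Lemma~\ref{importantchar}(3), there exists a finite $\Sigma\supset\Sigma_H$ and a height $H'$ of the same type $c$ as $H$ with $H'_v\equiv 1$ for $v\in\Sigma$, $H'_v=H_v$ for $v\notin\Sigma$, and $\Xi_{H'}^\perp=\{1\}$. Since $H$ and $H'$ differ only at the finitely many places in $\Sigma$ and each $BG(F_v)$ is finite, there exist constants $c_1,c_2>0$ with $c_1H'(x)\leq H(x)\leq c_2H'(x)$ for every $x\in BG(\AAA)$, which yields for $s>0$ the two-sided bound
\[
c_2^{-s}\int_{y_1U}H'^{-s}\mu\ \leq\ \int_{y_1U}H^{-s}\mu\ \leq\ c_1^{-s}\int_{y_1U}H'^{-s}\mu.
\]
Applying the Fourier expansion above and then Lemma~\ref{importantchar}(4) to $H'$, every character $\chi\neq 1$ satisfies $\wH'(s,\chi)=0$ (since $\Xi_{H'}^\perp=\{1\}$), so only the trivial character survives and
\[
\int_{y_1U}H'^{-s}\mu=\frac{1}{|U^\perp|}\,\wH'(s,1).
\]
Corollary~\ref{trivialcharacter}(2) applied to $H'$ then gives a pole of order exactly $b(c)$ at $s=1$ with strictly positive leading coefficient, so $L':=\lim_{s\to 1^+}(s-1)^{b(c)}\int_{y_1U}H'^{-s}\mu>0$.

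To conclude, I need the limit to exist for $H$ itself. Applying the same Fourier expansion to $H$ and invoking Lemma~\ref{importantchar}(4) to kill the characters outside $\Xi_H^\perp$, one gets a \emph{finite} sum
\[
\int_{y_1U}H^{-s}\mu=\frac{1}{|U^\perp|}\sum_{\chi\in U^\perp\cap\Xi_H^\perp}\overline{\chi(y_1)}\,\wH(s,\overline{\chi}),
\]
where each $\chi\in\Xi_H^\perp\subset i(BG^*(F))\cap K_H^\perp$ is automorphic and $K_H$-invariant. Thus Corollary~\ref{trivialcharacter}(1) applies to every term, showing that each $\wH(s,\overline{\chi})$ is meromorphic with a pole of order at most $b(c)$ at $s=1$. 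Consequently $(s-1)^{b(c)}\int_{y_1U}H^{-s}\mu$ is holomorphic at $s=1$ and admits a genuine limit $L$ as $s\to 1^+$. Passing to the limit in the two-sided bound yields $c_2^{-1}L'\leq L\leq c_1^{-1}L'$, which forces $L>0$.

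The main technical point is controlling the character sum: the standard obstacle is that non-automorphic characters $\chi$ could in principle produce new poles or cancel the main term, but this is excluded by Lemma~\ref{importantchar}(4), which restricts the effective sum to the finite set $\Xi_H^\perp$. Everything else is bookkeeping with the comparison $H\asymp H'$ and the pole-order bound from Corollary~\ref{trivialcharacter}.
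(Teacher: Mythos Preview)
Your argument has a genuine gap in how you invoke Lemma~\ref{importantchar}(4). You expand $\mathbf 1_{y_1U}$ over the finite group $U^\perp\subset BG^*(\AAA)$ and then claim that Lemma~\ref{importantchar}(4) kills every $\chi\in U^\perp$ outside $\Xi_H^\perp$, leaving only automorphic characters. But look at the \emph{proof} of Lemma~\ref{importantchar}(4): it only produces a place~$v$ with $\chi_v|_{K_{H_v}}\neq 1$, so it only shows $\wH(s,\chi)=0$ for $\chi\notin K_H^\perp$. The set $U^\perp\cap K_H^\perp$ can (and typically does) contain characters that are \emph{not} in $i(BG^*(F))$, and for such non-automorphic~$\chi$ the Fourier transform $\wH(s,\chi)$ need not vanish. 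The same problem hits your $H'$ step: $\Xi_{H'}=BG(\AAA)$ tells you $\Xi_{H'}^\perp=\{1\}$, but $K_{H'}^\perp$ is much larger than $\{1\}$, so you cannot conclude $\wH'(s,\chi)=0$ for all $\chi\neq 1$ in $U^\perp$.

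This matters because Corollary~\ref{trivialcharacter} is stated only for $\chi\in H^1(\Gamma_F,G^*(\oF))\cap K_H^\perp$; its proof goes through the global representation $\rho_\chi^W$, which requires a global cocycle lift and hence an automorphic~$\chi$. For a non-automorphic $\chi\in K_H^\perp$ you have no Artin $L$-function to compare to, and the paper gives you no meromorphic continuation or pole-order control for $\wH(s,\chi)$ in that case. So neither the positivity of your $L'$ nor the existence of your limit~$L$ is justified.

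The paper avoids this entirely: instead of Fourier-expanding over $U^\perp$, it writes $BG(\AAA)$ as a finite union of cosets $y_iW$, uses Lemma~\ref{codx} to bound each $\int_{y_iW}H^{-s}\mu$ by a constant times $\int_{y_1W}H^{-s}\mu$, and then feeds in the known positivity of $\lim_{s\to 1^+}(s-1)^{b(c)}\int_{BG(\AAA)}H^{-s}\mu$ from Corollary~\ref{trivialcharacter}(2). A second pass with cosets of $V\cap W$ inside $V$ handles the intersection. No character analysis beyond the trivial one is needed.
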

\begin{proof}
Let $y_1\doots y_r$ be a set of representatives of classes of~$W$ in~$BG(\AAA)$. For~$s>1$, by Lemma~\ref{codx}, one has that \begin{align*}\int_{BG(\AAA)}H^{-s}\mu&=\sum_{i=1}^k\int_{y_iW}H^{-s}\mu\\
&=\sum_{i=1}^r\int_{y_1W}H(y_iy_1^{-1}t)^{-s}d\mu(t)\\
&\leq\bigg( \sum_{i=1}^rC_i^{-s}\bigg)\cdot\int_{y_1W}H^{-s}\mu
\end{align*}
for certain $C_i>0$. Now, Part~(2) of Corollary~\ref{trivialcharacter} gives that
\begin{align*}
0&<\lim_{s\to 1^+}(s-1)^{b(c)}\int_{BG(\AAA)}H^{-s}\mu\\
&\leq\lim_{s\to 1^+}(s-1)^{b(c)}\bigg(\sum_{i=1}^rC_i^{-s}\bigg)\cdot\int_{y_1W}H^{-s}\mu\\
&=\bigg(\sum_{i=1}^rC_i^{-1}\bigg)\lim_{s\to 1^+}(s-1)^{b(c)}\int_{y_1W}H^{-s}\mu,
\end{align*}
and hence $$\lim_{s\to 1^+}(s-1)^{b(c)}\int_{y_1W}H^{-s}\mu>0.$$
Note that it follows (by setting $W=V$ and $y_1=1$) that $$\lim_{s\to 1^+}(s-1)^{b(c)}\int_{V}H^{-s}\mu>0.$$ 
Let $y_1', y_2'\doots y_k'\in V$ be a set of representatives of classes of $V\cap W$ in~$V$ such that $y_1'=y_1$. For~$s>1$, by Lemma~\ref{codx} we have that:
\begin{align*}
\int_{V}H^{-s}\mu&=\sum_{i=1}^k\int_{ (y_i')(W\cap V)}(H^{-s})\mu\\
&=\sum_{i=1}^k\int_{(y_1')(W\cap V)}(H((y_i')(y_1')^{-1}t)^{-s})d\mu(t)\\
&\leq \bigg(\sum_{i=1}^k(C'_i)^{-s}\bigg)\int_{(y_1)(W\cap V)}H^{-s}\mu\\
\end{align*}
for some~$C'_i>0$. We obtain that
\begin{align*}
0&<\lim_{s\to 1^+}(s-1)^{b(c)}\int_{V}H^{-s}\mu\\
&\leq\lim_{s\to 1^+}(s-1)^{b(c)}\bigg(\sum_{i=1}^k(C_i')^{-s}\bigg)\cdot\int_{(y_1)(W\cap V)}H^{-s}\mu\\
&=\bigg(\sum_{i=1}^k(C_i')^{-1}\bigg)\lim_{s\to 1^+}(s-1)^{b(c)}\int_{(y_1)(W\cap V)}H^{-s}\mu.
\end{align*}
Finally, by remarking that $(y_1W)\cap V\supset (y_1)(W\cap V)$, we deduce that $$\lim_{s\to 1^+}(s-1)^{b(c)}\int_{(y_1W)\cap V}H^{-s}\mu\geq \lim_{s\to 1^+}(s-1)^{b(c)}\int_{(y_1)(W\cap V)}H^{-s}\mu>0,$$as claimed.
\end{proof}
\subsubsection{}In this paragraph we apply the Poisson formula for the height zeta function.
\begin{mydef}
For $s\in\CC$, by $Z(s)$ we denote the formal (non-empty) sum $$Z(s):=\frac{1}{\#G(F)}\sum_{x\in BG(F)}H(x)^{-s}.$$
\end{mydef}
The following fact follows from Proposition \ref{estimatenumber} and \cite[Chapter~2, Theorem 2.1]{widder}.
\begin{prop}\label{convz}
There exists $\eta>0$ such that the series defining~$Z$ converges absolutely and uniformly for $\Re(s)>\eta$. The function $s\mapsto Z(s)$ is holomorphic in the domain $\Re(s)>\eta$.
\end{prop}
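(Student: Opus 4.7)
The strategy is a standard Abel/Mellin--Stieltjes summation argument, deducing convergence of the Dirichlet series $Z(s)$ from the polynomial counting bound of Proposition~\ref{estimatenumber}. Set
\[
N(B):=\#\{x\in BG(F)\mid H(x)\leq B\}.
\]
By Proposition~\ref{estimatenumber}, there exist constants $C,m>0$ such that $N(B)\leq CB^{m}$ for every $B>0$. Moreover, since every local height factor satisfies $H_{v}(x)=q_{v}^{c_{v}(x)}\geq 1$ (as each $c_{v}$ takes values in $\RR_{\geq 0}$), we have $H(x)\geq 1$ for every $x\in BG(F)$, so $N(B)=0$ for $B<1$.

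First I would rewrite $Z(s)$ as a Stieltjes integral. For $\Re(s)>m$ and any real parameter $T>1$, summation by parts gives
\[
\sum_{\substack{x\in BG(F)\\H(x)\leq T}}H(x)^{-s}=N(T)\,T^{-s}+s\int_{1}^{T}N(B)\,B^{-s-1}\,dB.
\]
Since $|N(T)T^{-s}|\leq CT^{m-\Re(s)}\to 0$ and the integral $\int_{1}^{\infty}N(B)\,B^{-s-1}\,dB$ converges absolutely whenever $\Re(s)>m$ (because the integrand is bounded by $CB^{m-\Re(s)-1}$), passing to the limit $T\to\infty$ yields
\[
\#G(F)\cdot Z(s)=s\int_{1}^{\infty}N(B)\,B^{-s-1}\,dB,
\]
and the series defining $Z(s)$ converges absolutely. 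This is exactly the form of the argument in \cite[Chapter~2, Theorem~2.1]{widder}, which one may cite directly with $\eta:=m$.

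For the uniform convergence and holomorphicity on $\{\Re(s)>\eta\}$, I would observe that on any compact subset $K\subset\{\Re(s)>\eta\}$ there exists $\sigma_{0}>\eta$ with $\Re(s)\geq\sigma_{0}$ throughout $K$, and the tail $\sum_{H(x)>T}H(x)^{-s}$ is uniformly bounded by $\int_{T}^{\infty}|s|\cdot CB^{m-\sigma_{0}-1}\,dB$, which tends to $0$ as $T\to\infty$ uniformly in $s\in K$. Since each finite partial sum $\sum_{H(x)\leq T}H(x)^{-s}$ is entire in $s$, the uniform convergence on compacta implies that $Z(s)$ is holomorphic on $\{\Re(s)>\eta\}$ by the standard theorem on uniform limits of holomorphic functions. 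There is no real obstacle here: the only input beyond bookkeeping is Proposition~\ref{estimatenumber}, and the polynomial bound on $N(B)$ makes the Mellin--Stieltjes integral converge in a half plane, after which everything is routine.
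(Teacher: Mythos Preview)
Your proposal is correct and is exactly the approach the paper takes: the paper's proof is a one-line citation of Proposition~\ref{estimatenumber} together with \cite[Chapter~2, Theorem~2.1]{widder}, and you have simply written out the standard Abel/Mellin--Stieltjes argument that lies behind that citation. One minor remark: the claim $H(x)\geq 1$ relies on $q_v^{c_v(x)}\geq 1$ at every place, and the paper does not literally define $q_v$ at archimedean places; but since $\Sigma$ is finite and each $H_v$ takes values in a finite subset of $\RR_{>0}$, there is in any case a positive lower bound $H_{\min}$ for $H$ on $BG(F)$, so you can start the integral at $H_{\min}$ instead of $1$ with no change to the argument.
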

The following conditions we need to verify in order to apply the Poisson formula holds.
\begin{lem}\label{condofpoiss}
Let $s\in\CC$ be such that $\Re(s)>\eta$. Let $A>0$ and let $g:BG(\AAA)\to\RR_{\geq 0}$ be a bounded continuous function. For every $x\in BG(\AAA)$, one has that the series $$Z^g(x,s):=\frac{\#\Sh^1(G)}{\# G(F)}\sum_{y\in i(BG(F))}g(xy)^{-s}H(xy)^{-s}$$ converges absolutely. The function $$Z^g(-,s):BG(\AAA)\to\CC,\hspace{1cm}x\mapsto Z^g(x,s)$$ is continuous.
\end{lem}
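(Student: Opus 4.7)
I would prove this in two stages: first establish absolute convergence of the series for fixed $x$ by dominating it by the scalar series $Z(\Re(s))$, and then upgrade to continuity via a Weierstrass M-test on a suitable compact open neighborhood where the dominating constant is locally constant.

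\emph{Absolute convergence.} Fix $x\in BG(\AAA)$ and $y\in BG(\AAA)$. By Lemma~\ref{codx} one has $H(xy)\geq C(x)^{-1}H(y)$, so
\begin{equation*}
|H(xy)^{-s}|=H(xy)^{-\Re(s)}\leq C(x)^{\Re(s)}H(y)^{-\Re(s)}.
\end{equation*}
Since $g$ is bounded (and bounded below by $A>0$, making $g^{-s}$ well-defined), there is a constant $M=M(g,s)>0$ with $|g(xy)^{-s}|\leq M$ uniformly in $y$. For $z\in BG(F)$ the height satisfies $H(z)=H(i(z))$, and each fiber of $i:BG(F)\to i(BG(F))$ has cardinality $\#\Sh^1(G)$. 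Hence
\begin{equation*}
\sum_{y\in i(BG(F))}|g(xy)^{-s}H(xy)^{-s}|\leq M\cdot C(x)^{\Re(s)}\cdot\frac{1}{\#\Sh^1(G)}\sum_{z\in BG(F)}H(z)^{-\Re(s)},
\end{equation*}
and the right-hand side is $M\cdot C(x)^{\Re(s)}\cdot \#G(F)/\#\Sh^1(G)\cdot Z(\Re(s))$, which converges for $\Re(s)>\eta$ by Proposition~\ref{convz}.

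\emph{Continuity.} I would exploit that every $BG(F_v)$ is finite and discrete, so $BG(\AAA)$ has a basis of compact open neighborhoods of the form $\prod_{v\in S}\{y_v\}\times\prod_{v\notin S}BG(\Ov)$ for $S\subset M_F$ finite with $S\supset\Sigma_H$. Given $x_0\in BG(\AAA)$, choose such a neighborhood $V$ of $x_0$ with $S\supset \Sigma_{x_0}$; then for every $x\in V$ the set $\Sigma_x=\Sigma_{x_0}$ is unchanged, and hence $C(x)=C(x_0)$ is constant on $V$. The estimate from the previous paragraph becomes uniform in $x\in V$:
\begin{equation*}
|g(xy)^{-s}H(xy)^{-s}|\leq M\cdot C(x_0)^{\Re(s)}H(y)^{-\Re(s)}.
\end{equation*}
Each individual term $x\mapsto g(xy)^{-s}H(xy)^{-s}$ is continuous, since translation in the topological group $BG(\AAA)$ is continuous and both $g$ and $H$ are continuous on $BG(\AAA)$. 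The Weierstrass M-test yields uniform convergence of $Z^g(-,s)$ on $V$, whence continuity at $x_0$; as $x_0$ was arbitrary, $Z^g(-,s)$ is continuous on $BG(\AAA)$.

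\emph{Main obstacle.} Neither step is genuinely hard; the one point that requires care is checking that $C(\cdot)$ is locally constant, which is ultimately a bookkeeping observation about the restricted-product topology on $BG(\AAA)$ and the definition of $\Sigma_x$. All heavy lifting (finiteness of the index $[BG(\AAA):\Xi_H]$, convergence of the scalar zeta function, and the multiplicative estimate) is already packaged in Lemma~\ref{codx}, Proposition~\ref{convz} and the structural properties of $i(BG(F))\subset BG(\AAA)$ recalled earlier in the section.
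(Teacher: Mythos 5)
Your proof is correct and follows essentially the same route as the paper: using Lemma~\ref{codx} one dominates the series uniformly on a compact open neighbourhood $U_x$ on which $\Sigma_x$ (hence $C(\cdot)$) is constant, and Proposition~\ref{convz} then gives uniform convergence of a series of continuous terms and thus continuity. One small remark: the exponent $g(xy)^{-s}$ in the stated lemma is a typographical slip for $g(xy)^{s}$ (as the paper's own proof and the later applications in Propositions~\ref{poisson} and~\ref{genfz} with $g=1$ and $g=f\geq 0$ show), so your extra reading of $A$ as a positive lower bound for $g$ is unnecessary, though harmless.
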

\begin{proof}
For $x\in \BGA$, we define $$\Sigma_x:=\Sigma\cup\{v\in M^0_F-\Sigma|i_v(x)\not\in B\GG(\Ov)\}$$ and $$U_{x}:=\prod_{v\in\Sigma_x}\{x_v\}\times\prod_{v\in M_F^0-\Sigma_x}B\GG(\Ov).$$ For every $x\in BG(\AAA)$, we are going to prove that the series $Z^g(x,s)$ converges absolutely and uniformly on $U_x$. For every $x'\in U_x$ and every $y\in i(BG(F))$, it follows from Lemma \ref{codx} that there exists $C(x)>0$ such that $$\big|H(x'y)^{-s}\big|=H(x'y)^{-\Re(s)}\leq C(x)^{-\Re(s)}H(y)^{-\Re(s)}.$$ Let $A>0$ be such that for every~$x\in BG(\AAA)$ one has $g(x)\leq A$. We have that 
\begin{align*}
\frac{\# \Sh^1(G)}{\# G(F)}\sum_{y\in i(BG(F))}\bigg|g(xy)^{s}H(xy)^{-s}\bigg|\hskip-3,7cm&\\&\leq \frac{\#\Sh^1(G)}{\# G(F)}\sum_{y\in i(BG(F))}A^{\Re(s)}C(x)^{-\Re(s)}H(y)^{-\Re(s)}\\
&=\frac{A^{\Re(s)}\cdot C(x)^{-\Re(s)}}{\# G(F)}{\sum_{y\in BG(F)}H(y)^{-\Re(s)}}\\
&=A^{\Re(s)}\cdot C(x)^{-\Re(s)}{Z(\Re(s))}.
\end{align*}
Now using Proposition \ref{convz}, we deduce that the series $Z^g(x,s)$ converges absolutely and uniformly in the domain $x'\in U_x$. It follows that $x'\mapsto Z^g(x',s)$ is continuous on $U_x$. We deduce that $x\mapsto Z^g(x,s)$ is continuous on $BG(\AAA)$, as claimed. 
\end{proof}
If $W$ is a locally compact abelian group endowed with a Haar measure~$dw$, we will denote by $dw^*$ the Haar measure on the character group~$W^*$ which is the dual of $dw$ \cite[Chapter II, \S 1, \no 3, Definition~4]{TSpectrale}. It is characterized by the following property: it is the unique Haar measure on~$W^*$ such that for every $f\in L^1(W, dw)$ for which the Fourier transform $\widehat f:\chi\mapsto\widehat f(\chi)=\int_{W}f\overline{\chi}dw$ satisfies $\widehat f \in L^1(W^*, dw^*)$, the \textit{Fourier inversion formula} \cite[Chapter II, \S 1, \no 4, Proposition 4]{TSpectrale} is valid: \begin{equation}\label{fourierinversion}f(w)=\int_{W^*}\overline{\chi(w)}\widehat{f}(\chi) dw^*(\chi)
\end{equation} for every $w\in W$.

In the next proposition, we apply the Poisson formula \cite[Chapter II, \S1, \no 8, Proposition 8]{TSpectrale}.
\begin{prop}\label{poisson}
In the domain $\Re(s)>\max(\eta,1)$,  one has that $$Z(s)=\frac{1}{\tau_{BG}}\sum_{\chi \in \Xi_H^{\perp}}\int_{BG(\AAA)}H^{-s}\overline{\chi}\mu.$$ 
\end{prop}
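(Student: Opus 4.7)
The plan is to apply the Poisson summation formula of Bourbaki (TSpectrale, Chapter II, \S 1, \no 8, Proposition 8) to the function $H^{-s}$ on the locally compact abelian group $BG(\AAA)$, summed against the discrete closed cocompact subgroup $i(BG(F))$. The preparatory work for this has essentially been assembled in the preceding paragraphs; what remains is to combine it correctly.

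First I would rewrite $Z(s)$ as a sum over $i(BG(F))$. Since $\ker(i)=\Sh^1(G)$ and $H$ factors through $i$, the pushforward of $\frac{1}{\#G(F)}\coun_{BG(F)}$ under $i$ is $\frac{\#\Sh^1(G)}{\#G(F)}\coun_{i(BG(F))}$, hence
\begin{equation*}
Z(s)=\frac{1}{\#G(F)}\sum_{x\in BG(F)}H(x)^{-s}=\frac{\#\Sh^1(G)}{\#G(F)}\sum_{y\in i(BG(F))}H(y)^{-s}.
\end{equation*}
Next I would verify the hypotheses of Poisson summation. The Bourbaki version requires that the series defining the periodization $x\mapsto\sum_{y\in i(BG(F))}H(xy)^{-s}$ converges absolutely and defines a continuous function on $BG(\AAA)$; this is precisely Lemma \ref{condofpoiss} applied with $g\equiv 1$, which in turn relies on Lemma \ref{codx} and Proposition \ref{convz}. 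The dual side is unproblematic: by Part (1) of Lemma \ref{importantchar} the group $\Xi_H^\perp$ is finite, and each Fourier integral $\wH(s,\chi)$ is absolutely convergent in the same range by the majorization used in Lemma \ref{condofpoiss}.

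The measure normalizations have been chosen in Lemma \ref{tamagawabg} so that $\mu$ and $\frac{\#\Sh^1(G)}{\#G(F)}\coun_{i(BG(F))}$ yield a quotient measure of total mass $\tau_{BG}$ on the compact quotient $BG(\AAA)/i(BG(F))$. Applying Poisson with these normalizations at the identity then gives
\begin{equation*}
\frac{\#\Sh^1(G)}{\#G(F)}\sum_{y\in i(BG(F))}H(y)^{-s}=\frac{1}{\tau_{BG}}\sum_{\chi\in i(BG(F))^{\perp}}\wH(s,\chi).
\end{equation*}
Finally I would reduce the character sum from $i(BG(F))^\perp$ to $\Xi_H^\perp=i(BG(F))^\perp\cap K_H^\perp$: for any $\chi\in i(BG(F))^\perp$ not lying in $K_H^\perp$, the $K_H$-invariance of $H^{-s}$ forces $\wH(s,\chi)=0$, as recorded in Part (4) of Lemma \ref{importantchar}. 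This produces the claimed identity.

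The main obstacle is purely technical: one must match the measure normalizations used in Bourbaki's Poisson formula with those fixed in Lemma \ref{tamagawabg}, so that the constant in front of the dual sum comes out exactly as $1/\tau_{BG}$. All the analytic conditions (absolute convergence, continuity of the periodization, finiteness of the dual sum) are already supplied by Proposition \ref{convz}, Lemma \ref{condofpoiss}, and Lemma \ref{importantchar}, so no new estimates are needed.
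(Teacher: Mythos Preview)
Your proposal is correct and follows essentially the same argument as the paper: rewrite $Z(s)$ as a weighted sum over $i(BG(F))$, apply Bourbaki's Poisson formula with the hypotheses supplied by Lemma~\ref{condofpoiss} (with $g\equiv 1$), read off the constant $1/\tau_{BG}$ from the measure normalization in Lemma~\ref{tamagawabg}, and then truncate the character sum to $\Xi_H^\perp$ via Part~(4) of Lemma~\ref{importantchar}. The only cosmetic difference is that the paper tracks the constants using $\coun_{i(BG(F))}$ and computes the dual measure on $i(BG^*(F))$ explicitly, whereas you absorb this into the single observation that the quotient has total mass $\tau_{BG}$; both are the same computation.
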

\begin{proof}
We will denote by $d\chi$ the dual Haar measure on the group $(BG(\AAA)/i(BG(F)))^*$ of the Haar measure $\mu/\coun_{i(BG(F))}$ on the group $BG(\AAA)/i(BG(F)).$ The space $(BG(\AAA)/i(BG(F))$ is compact and by Lemma \ref{tamagawabg}, one has that $\tau_{BG}= \frac{\# G^*(F)}{\# \Sh^2(G)},$ where 
\begin{align*}
\tau_{BG}=\bigg(\mu/\bigg(\frac{\# \Sh^1(G)\coun_{i(BG(F))}}{\# G(F)}\bigg)\bigg)\big(BG(\AAA)/(i(BG(F)))\big).
\end{align*}
Thus $$(\mu/\coun_{i(BG(F))})(BG(\AAA)/i(BG(F)))=\frac{\# G^*(F)\cdot \#\Sh^1(G)}{\# \Sh^2(G)\cdot \# G(F)}.$$
It follows that the dual measure on the discrete group $$i(BG^*(F))=i(BG(F))^{\perp}=(BG(\AAA)/i(BG(F)))^*$$ is given by$$d\chi=\frac{\# \Sh^2(G)\cdot \# G(F)}{\# G^*(F)\cdot \#\Sh^1(G)}\coun_{i(BG^*(F))}.$$
The necessary conditions of \cite[Chapter II, \S1, \no 8, Proposition 8]{TSpectrale} have been verified  in Lemma~\ref{condofpoiss} (we set $g=1$) and applying it gives:
\begin{align*}
Z(s)&=\frac{1}{\# G(F)}\sum_{x\in BG(F)}H(x)^{-s}\\
&=\frac{\#\Sh^1(G)}{\# G(F)}\sum_{x\in i(BG(F))}H(x)^{-s}\\
&=\frac{\# \Sh^1(G)\cdot\# \Sh^2(G)\cdot \# G(F)}{\# G(F)\cdot\# G^*(F)\cdot \# \Sh^1(G)}\sum_{\chi\in i(BG^*(F))}\wH(s,\chi)\\
&=\frac{1}{\tau_{BG}}\sum_{\chi\in (i(BG^*(F))}\wH(s,\chi),
\end{align*}
whenever every sum converges. It follows from Proposition \ref{convz} that the sums $\sum_{x\in BG(F)}H(x)^{-s}$ and $\sum_{x\in i(BG(F))}H(x)^{-s}$ converge in the domain $\Re(s)>~\eta$. By Corollary \ref{trivialcharacter}, for every $\chi\in BG(\AAA)^*$, the integral defining the Fourier transform $\wH(s,\chi)$ converges in the domain $\Re(s)>1$. Moreover, by Lemma \ref{importantchar}, one has for every $\Re(s)>1$ that $\wH(s,\chi)=0$ unless $\chi$ is an element of the finite set $(\Xi_H)^{\perp}.$ Hence, the sum $\sum_{\chi\in (i(BG^*(F))}\wH(s,\chi)$ reduces to the finite sum $\sum_{\chi\in\Xi_H^{\perp}}\wH(s,\chi).$ We deduce that $$Z(s)=\frac{1}{\tau_{BG}}\sum_{\chi\in \Xi_H^\perp}\wH(s,\chi)$$ for $\Re(s)>\max(1,\eta)$ as claimed.
\end{proof}
\begin{thm}\label{residueofz}
Let $\lambda>1$ be as in Lemma \ref{ftlfun}. The function $s\mapsto~Z(s)$ extends to a meromorphic function in the domain $\Re(s)>\lambda^{-1}.$ The meromorphic function $Z$ admits a pole of order $b(c)$ at $s=1$. One has that \begin{equation*}\lim_{s\to 1}(s-1)^{b(c)}Z(s)=\frac{\tau_H }{\tau_{BG}},\end{equation*}
where $$\tau_H:=\#\Xi_H^\perp\cdot\lim_{s\to 1^+}(s-1)^{b(c)}\int_{\#\Xi_H^\perp}H^{-s}\mu>0.$$
Moreover, if~$F$ is a number field, the function~$Z$ is holomorphic in the domain $\{\Re(s)\geq 1\}-\{1\}$ and if~$F$ is a function field, then the function~$Z$ is holomorphic in the domain $$\{\Re(s)\geq 1-\epsilon\}-\{1+2ki\pi\log(q)^{-1}|k\in\ZZ\}$$ for some $\epsilon>0$.
\end{thm}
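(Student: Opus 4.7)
The plan is to start from the Poisson formula of Proposition~\ref{poisson}, which expresses
$$Z(s) = \frac{1}{\tau_{BG}}\sum_{\chi\in\Xi_H^\perp}\wH(s,\chi)$$
for $\Re(s) > \max(1,\eta)$, and to show that the right-hand side extends meromorphically and has the asserted pole behavior. The sum is \emph{finite} by Part~(1) of Lemma~\ref{importantchar}, so every analytic property of $Z$ can be read off term-by-term.

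The first step is to apply Corollary~\ref{trivialcharacter}(1) to each character $\chi\in\Xi_H^\perp\subset H^1(\Gamma_F,G^*(\oF))\cap K_H^\perp$: every $s\mapsto\wH(s,\chi)$ extends meromorphically to $\{\Re(s)>\lambda^{-1}\}$ with a possible pole at $s=1$ of order at most $b(c)$, and is holomorphic on $\{\Re(s)\geq 1\}\setminus\{1\}$ in the number field case, respectively on $\{\Re(s)\geq 1-\epsilon\}\setminus\{1+2k\pi i/\log(q):k\in\ZZ\}$ for some $\epsilon>0$ in the function field case. Summing the finitely many terms and dividing by $\tau_{BG}$ yields the corresponding analytic properties of $Z$, with a pole at $s=1$ of order at most $b(c)$.

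The second step is to identify the principal part at $s=1$. The key observation is that, by Part~(2) of Lemma~\ref{importantchar}, $\Xi_H^\perp$ is canonically the full character group of the finite quotient $BG(\AAA)/\Xi_H$. Orthogonality of characters on this finite group yields
$$\sum_{\chi\in\Xi_H^\perp}\overline{\chi}(x)=\#\Xi_H^\perp\cdot\mathbf{1}_{\Xi_H}(x)\qquad(x\in BG(\AAA)).$$
Since the sum over $\chi$ is finite, one can interchange it with the integral defining each Fourier transform (for $\Re(s)>1$ where absolute convergence holds by Corollary~\ref{trivialcharacter}), which gives
$$\sum_{\chi\in\Xi_H^\perp}\wH(s,\chi)=\int_{BG(\AAA)}H^{-s}\sum_{\chi\in\Xi_H^\perp}\overline{\chi}\,d\mu=\#\Xi_H^\perp\cdot\int_{\Xi_H}H^{-s}\,d\mu.$$
Thus for $\Re(s)>\max(1,\eta)$ one obtains the identity $\tau_{BG}\cdot Z(s)=\#\Xi_H^\perp\cdot\int_{\Xi_H}H^{-s}\,d\mu$, and the right-hand side inherits the meromorphic extension established in the first step.

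The final step is to verify that the pole at $s=1$ really is of order $b(c)$ by showing the principal value is nonzero. For this I apply Lemma~\ref{intfinind} with the open, finite-index subgroup $V=W=\Xi_H\subset BG(\AAA)$ and $y_1=1$, obtaining
$$\lim_{s\to 1^+}(s-1)^{b(c)}\int_{\Xi_H}H^{-s}\,d\mu>0.$$
Combined with the rewriting above, this gives
$$\lim_{s\to 1}(s-1)^{b(c)}Z(s)=\frac{\#\Xi_H^\perp}{\tau_{BG}}\lim_{s\to 1^+}(s-1)^{b(c)}\int_{\Xi_H}H^{-s}\,d\mu=\frac{\tau_H}{\tau_{BG}}>0,$$
completing the proof. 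The only delicate points are the interchange of the finite sum and the integral (harmless in the region of absolute convergence and then propagated by meromorphic continuation) and the careful appeal to Lemma~\ref{intfinind} to guarantee non-vanishing of the residue; the main technical content was already absorbed into Corollary~\ref{trivialcharacter} and Lemma~\ref{intfinind}.
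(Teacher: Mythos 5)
Your proposal is correct and follows the same overall architecture as the paper: Poisson summation (Proposition~\ref{poisson}) reduces $Z(s)$ to a finite sum over $\Xi_H^\perp$; Corollary~\ref{trivialcharacter}(1) supplies the meromorphic extension and the upper bound on the pole order; and the exact pole order is pinned down by rewriting $\tau_{BG}Z(s) = \#\Xi_H^\perp\int_{\Xi_H}H^{-s}\mu$ and showing the right side has a nonvanishing limit after multiplying by $(s-1)^{b(c)}$.

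Where you genuinely diverge is in how you obtain that last rewriting. The paper applies the Poisson formula a second time — for the closed inclusion $\Xi_H^\perp\subset BG(\AAA)^*$ — which forces it to compute dual Haar measures on $(BG(\AAA)^*/\Xi_H^\perp)^* = \Xi_H$ and then invoke Fourier inversion to recover $H(x)^{-s}$ from $\int_{BG(\AAA)^*}\wH(s,\chi)\overline{x(\chi)}\,d\mu^*(\chi)$. You instead observe that $\Xi_H^\perp$ is the full dual of the finite group $BG(\AAA)/\Xi_H$ (Lemma~\ref{importantchar}(2)), so $\sum_{\chi\in\Xi_H^\perp}\overline{\chi} = \#\Xi_H^\perp\cdot\mathbf 1_{\Xi_H}$ by orthogonality, and swapping a finite sum with the absolutely convergent integral over $BG(\AAA)$ (for $\Re(s)>1$) gives the identity directly. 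This is a cleaner route: it replaces the dual-measure bookkeeping and a second Poisson application by elementary finite-group character theory. You also explicitly invoke Lemma~\ref{intfinind} with $V=W=\Xi_H$, $y_1=1$ for the positivity $\lim_{s\to 1^+}(s-1)^{b(c)}\int_{\Xi_H}H^{-s}\mu>0$; the paper cites Corollary~\ref{trivialcharacter} at that point, but since that corollary only directly covers the integral over all of $BG(\AAA)$, your citation of Lemma~\ref{intfinind} is actually the more precise justification. In short, same structure and same endpoint, but your derivation of the key identity is more elementary, and your bookkeeping of which lemma supplies the positivity is more careful.
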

\begin{proof}
For every $\chi\in \Xi_H^{\perp}$, by Part (1) of Corollary \ref{trivialcharacter}, one has that $s\mapsto\wH(s,\chi)$ extends to a meromorphic function in the domain $\Re(s)>\lambda^{-1}$ with a possible pole at $s=1$ of order at most $b(c)$. It follows from this fact and Proposition \ref{poisson} that the function $$s\mapsto Z(s)=\sum _{\chi\in \Xi_H^\perp}\wH(s,\chi)$$ extends to a meromorphic function in the domain $\Re(s)>\lambda^{-1}$ with a possible pole at $s=1$ of order at most $b(c)$.

Now, we are going to prove that the function $Z$ has a pole at $s=1$ which is of order $b(c)$. We will apply Poisson formula to the function $$BG(\AAA)^*\to\CC,\hspace{1cm}\chi\mapsto \frac{1}{\tau_{BG}}\wH(s,\chi)$$ when $\Re(s)>\max(\eta,1)$, for the closed inclusion of locally compact abelian groups $$\Xi_H^\perp\subset BG(\AAA)^*,$$ where the finite group $\Xi_H^\perp$ is endowed with the counting measure and the group $BG(\AAA)^*$ is endowed with the measure $\mu^*$. We first calculate the dual measure of the measure $(\mu/\coun_{\Xi_H^\perp})^*$ on $$(BG(\AAA)^*/\Xi_H^\perp)^*=\Xi_H.$$ By Lemma \ref{importantchar}, the group $\Xi_H$ identifies with the kernel of the surjective homomorphism $$BG(\AAA)=(BG(\AAA)^*)^*\to (\Xi_H^{\perp})^*,\hspace{1cm}x\mapsto x|_{\Xi_H^\perp}$$ and is an open and closed subgroup of the index $\# (\Xi_H^\perp)^*=\#\Xi_H^\perp$ in $BG(\AAA)$. The dual measure of the measure $\coun_{\Xi_H^\perp}$ on the dual group $(\Xi^\perp_H)^*$ is given by $\frac{1}{|\Xi_H^\perp|}\cdot~\coun_{(\Xi^\perp_H)^*},$ thus we have an equality of the Haar measures on $(BG(\AAA)^*/\Xi_H^\perp)^*=\Xi_H$: $$(\mu^*/\coun_{\Xi_H^\perp})^*=\#\Xi_H^\perp\cdot \mu|_{\Xi_H}.$$ 
Note that for $x\in (BG(\AAA)^*)^*=BG(\AAA),$ by the Fourier inversion formula (\ref{fourierinversion}), one has for $\Re(s)>\eta$ that
\begin{align*}
\int_{BG(\AAA)^*}\wH(s,\chi)\overline{x(\chi)}d\mu^*(\chi)\hskip-1cm&\\&=\int_{BG(\AAA)^*}\bigg(\int_{BG(\AAA)}H(y)^{-s}\overline{\chi(y)}d\mu(y) \bigg)\overline{x(\chi)}d\mu^*(\chi)\\
&= H(x)^{-s}.
\end{align*}
Now, for $\Re(s)>\max(\eta,1),$ the Poisson formula and Proposition \ref{poisson} give $$Z(s)=\sum_{\chi\in\Xi_H^\perp}\frac{1}{\tau_{BG}}\wH(s,\chi)=\frac{\#\Xi_H^\perp}{\tau_{BG}}\cdot\int_{\Xi_H}H^{-s}\mu.$$
 Corollary \ref{trivialcharacter} gives $$0<\lim_{s\to 1^+} (s-1)^{b(c)}\int_{\Xi_H}H^{-s}\mu.$$ 
and we deduce that the function $s\mapsto Z(s)$ admits a pole at $s=1$ of order exactly $b(c).$ The principal value at this pole is $$\lim_{s\to 1^+}(s-1)^{b(c)}Z(s)=\frac{ \#\Xi_H^\perp}{\tau_{BG}}\lim_{s\to 1^+}(s-1)^{b(c)}\int_{\Xi_H}H^{-s}\mu.$$ 
If~$F$ is a number field, then it follows from Part (1) of Corollary \ref{trivialcharacter} that $s\mapsto Z(s)=\sum_{\chi\in\Xi_H^{\perp}}\wH(s,\chi)$ is holomorphic in the domain $\{\Re(s)\geq 1\}-\{1\}$. If~$F$ is a function field, then it follows from Part (1) of Corollary \ref{trivialcharacter} that $s\mapsto Z(s)=\sum_{\chi\in\Xi_H^{\perp}}\wH(s,\chi)$ is holomorphic in the domain $\{\Re(s)\geq 1\}-\{1+2ki\pi\log(q)^{-1}|k\in\ZZ\}$.  The proof of the theorem is completed.
\end{proof}
\subsection{Equidistribution}\label{secequid}  We keep notations from previous subsection. Theorem \ref{residueofz} allows modifications of heights at finitely many places. By using \cite[Proposition 2.10]{Igusa}, this will give us that $G$-torsors are equidistributed in a subspace of the infinite product $\prod_{v\in M_F}BG(F_v)$ for a certain measure related to the height $H$.
 By abuse of notation, if $f:\prod_{\vMF}BG(F_v)\to \CC$ is a function, we may write also~$f$ for the composite map $f\circ i:BG(F)\to\CC$ and the composite map $BG(\AAA)\to \prod_{\vMF}BG(F_v)\xrightarrow{f}\CC$. We note that the inclusion $BG(\AAA)\to \prod_{\vMF}BG(F_v)$ is continuous. The functions of form~$\otimes_v f_v$ with~$f_v\equiv 1$ for almost all~$v$ will be called {\it elementary} functions.
\subsubsection{}In the following lemma we will define a measure on the product~$\prod_{\vMF}BG(F_v)$ for which rational points equidistribute. More practical formula will be provided in Proposition \ref{genfz}. 
\begin{lem}\label{existmeas}
For every continuous $f:\prod_{\vMF}BG(F_v)\to~\CC$, the series $$\frac{1}{\# G(F)}\sum_{x\in BG(F)}f(x)H(x)^{-s}$$converges absolutely to a holomorphic function in the domain $\Re(s)>1$ and the limit $$\omega_H(f):=\frac{\tau_{BG} }{\# G(F)}\lim_{s\to 1^+}(s-1)^{b(c)}\sum_{x\in BG(F)} f(x)H(x)^{-s}$$exists. Moreover, the map $f\mapsto\omega_H(f)$ defines a Radon measure on on the space $\prod_{\vMF}BG(F_v)$.
\end{lem}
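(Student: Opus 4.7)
The plan has four steps. Absolute convergence on $\Re(s)>1$ is immediate from the bound $|f(x)H(x)^{-s}|\le\|f\|_\infty H(x)^{-\Re(s)}$ together with Theorem~\ref{residueofz} and Landau's theorem (applied to the positive Dirichlet series $Z(\sigma)$, whose abscissa of absolute convergence must therefore equal~$1$). Write
\begin{equation*}
\Omega_s(g):=\frac{\tau_{BG}}{\#G(F)}(s-1)^{b(c)}\sum_{x\in BG(F)}g(x)H(x)^{-s}\qquad(s>1),
\end{equation*}
so that the same bound gives $|\Omega_s(g)|\le \|g\|_\infty\,\Omega_s(1)$; by Theorem~\ref{residueofz}, $\lim_{s\to 1^+}\Omega_s(1)=\tau_H/\#G(F)$, hence $\sup_{1<s\le 2}\|\Omega_s\|<\infty$ when each $\Omega_s$ is viewed as a linear functional on $C\bigl(\prod_{\vMF}BG(F_v)\bigr)$.

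To show $\omega_H(f)=\lim_{s\to 1^+}\Omega_s(f)$ exists for every continuous~$f$, I would first reduce to elementary functions. The space $\prod_{\vMF}BG(F_v)$ is compact Hausdorff by Tychonoff (each factor being finite and discrete), and the algebra of elementary functions $\bigotimes_v f_v$ (with $f_v\equiv 1$ for almost all~$v$) is closed under complex conjugation, contains the constants, and separates points; by Stone--Weierstrass it is dense in the sup norm. Combined with the uniform norm bound on the family $(\Omega_s)_{1<s\le 2}$, a standard three-$\epsilon$ argument transfers the existence of $\lim_{s\to 1^+}\Omega_s(f)$ from this dense subalgebra to all of $C(\prod_vBG(F_v))$, together with the bound $|\omega_H(f)|\le \|f\|_\infty\cdot\tau_H/\#G(F)$.

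For elementary $f=\bigotimes_{v\in T}f_v\otimes\bigotimes_{v\notin T}1$ with $T\subset M_F$ finite, I would mimic the derivation of Theorem~\ref{residueofz} with $f\cdot H^{-s}$ in place of $H^{-s}$. Lemma~\ref{condofpoiss}, applied with $g=f$, verifies the Poisson hypotheses, and the argument of Proposition~\ref{poisson} produces the identity
\begin{equation*}
\frac{1}{\#G(F)}\sum_{x\in BG(F)}f(x)H(x)^{-s}=\frac{1}{\tau_{BG}}\sum_{\chi}\widehat{fH^{-s}}(\chi),
\end{equation*}
where the Fourier transform factors into local factors and vanishes outside a finite set of characters (those trivial on a compact-open subgroup of $BG(\AAA)$ adapted to~$f$). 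At places $v\notin T$ outside the ramification locus of~$\chi$ the local factor is unchanged from the proof of Theorem~\ref{residueofz}, so the comparison with the Artin $L$-factor $L_v(s,\rho^{G_*(\oF)_1}_\chi)$ from Lemma~\ref{ftlfun} applies verbatim; at the finitely many places $v\in T$ the local factor is a finite sum over the finite group $BG(F_v)$ and therefore entire in~$s$. Invoking Lemma~\ref{lfunf} and Corollary~\ref{trivialcharacter} then gives meromorphic continuation of each summand to $\Re(s)>\lambda^{-1}$ with a pole at $s=1$ of order at most~$b(c)$, so $\lim_{s\to 1^+}\Omega_s(f)$ exists.

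Finally, $f\mapsto\omega_H(f)$ is $\CC$-linear by linearity of sums and limits, positive on nonnegative~$f$ (each term is nonnegative for real $s>1$), and continuous by the bound above, so Riesz--Markov represents it as a Radon measure on the compact Hausdorff space $\prod_{\vMF}BG(F_v)$. I expect the third step to be the main obstacle: one must verify carefully that inserting the finitely many entire local factors $f_v$ for $v\in T$ neither produces new poles at $s=1$ nor raises the pole order beyond~$b(c)$. This is controlled because the pole structure is dictated entirely by the product of unperturbed local factors, which by Lemma~\ref{ftlfun} is comparable to the partial Artin $L$-function $L^{T\cup\Sigma_\chi}(s,\rho^{G_*(\oF)_1}_\chi)$ of Lemma~\ref{lfunf}.
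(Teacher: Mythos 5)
Your proof is correct, but it handles the elementary case by a genuinely different route than the paper. Both arguments begin by locating the abscissa of convergence at $\Re(s)=1$ via Theorem~\ref{residueofz}, and both reduce the general continuous $f$ to the elementary case via Stone--Weierstrass together with the uniform bound $\bigl|(s-1)^{b(c)}\sum_x f(x)H(x)^{-s}\bigr|\le\|f\|_\infty\cdot(s-1)^{b(c)}\sum_x H(x)^{-s}$; you phrase this as a uniformly bounded family of functionals plus a three-$\epsilon$ argument, the paper does it a bit more by hand. The difference is in the elementary case itself. The paper takes a strictly positive elementary $f$ and \emph{absorbs it into the height}: it applies Theorem~\ref{residueofz} to the modified height $f^{-1}\cdot H$, thereby obtaining the limit of $(s-1)^{b(c)}\sum_x f(x)^{s}H(x)^{-s}$ for free, and then corrects $f^s\to f$ using uniform convergence on the compact space. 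That is shorter and uses Theorem~\ref{residueofz} as a black box, but it does lean on reading $f^{-1}\cdot H$ as a height in the sense of Definition~\ref{definitionofheight}, which at the finitely many modified places need not satisfy $H_v\geq 1$; in practice none of the proofs use that inequality, but the letter of the definition is stretched. You instead re-run the Poisson argument directly with $f\cdot H^{-s}$ --- vanishing of Fourier transforms off a finite character group, comparison with Artin $L$-factors via Lemma~\ref{ftlfun}, meromorphic continuation and pole order from Lemma~\ref{lfunf} and Corollary~\ref{trivialcharacter}, entire local factors at the finitely many places where $f_v\not\equiv 1$. This is in effect the proof of the paper's later Proposition~\ref{genfz}(2), carried out in advance; it is more work, but it is non-circular (the inputs Lemma~\ref{condofpoiss}, Proposition~\ref{poisson}, Lemma~\ref{ftlfun}, Lemma~\ref{lfunf}, Corollary~\ref{trivialcharacter} do not depend on Lemma~\ref{existmeas}) and avoids the definitional subtlety. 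Your final Riesz--Markov step is also slightly cleaner: positivity follows immediately from $\Omega_s(f)\geq 0$ for $f\geq 0$ and real $s>1$, whereas the paper routes through approximating nonnegative $f$ by nonnegative elementary functions.
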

\begin{proof}It follows from Theorem \ref{residueofz} and \cite[Section 2.3, Proposition~7]{coursdarithmetique} that the abscissa of absolute convergence of the Dirichlet series $Z(s)$ is $\Re(s)=1$.  Now, by boundedness of $f$, the first claim follows. Let us prove the existence of the limit.
\begin{enumerate}[label=(\alph*)]
\item We first suppose that $f=\otimes_v f_v$ is elementary, with $f_v$ strictly positive functions. 
One has that 
\begin{align*}
\frac{\tau_{BG} }{\# G(F)}\lim_{s\to 1^+}{(s-1)^{b(c)}}\sum_{x\in BG(F)} \big(f(x)^{-1}\cdot H(x)\big)^{-s}
\end{align*}exists by Theorem \ref{residueofz}. By the compactness of $\prod_{\vMF}BG(F_v)$, one has that $f^{s}\to f$ uniformly when $s\to 1^+$. Thus, one has that $$\frac{\tau_{BG} }{\# G(F)}\lim_{s\to 1^+}{(s-1)^{b(c)}}\sum_{x\in BG(F)} \big(f(x)^{s}-f(x)\big)\cdot H(x)^{-s} =0.$$By subtracting the later limit from the former, we deduce that $\omega_H(f)$ exists. 
\item Suppose that $f$ is a finite sum of functions $\otimes_v f_v$ as in (a). Then $\omega_H(f)$ exists. Moreover, if $\lambda\in\RR$ is scalar, then $\omega_H(\lambda\cdot(\otimes_v f_v))=\lambda\cdot \omega_H(\otimes_v f_v)$ exists.
\item Consider the family of functions which are of form $g(f_1\doots f_n)$, where $n\geq 1$ is an integer, $g\in\RR[X_1\doots X_n]$ is a polynomial and $f_i$ are as in (a). The functions from (a) are stable for multiplications, hence, this family is given by the linear combinations of elementary functions. 
\item The family of elementary functions contains constants and separates the points of $\prod_{\vMF}BG(F_v)$. By Stone-Weierstrass theorem \cite[Chapter X, \S 4, \no 2, Theorem 3]{TopologieGd} and by (c), one has that any continuous $f:\prod_{\vMF}BG(F_v)\to\RR$ is a uniform limit of functions which are linear combinations of functions of form $\otimes_v f_v$. Thus, the limit $\omega_H(f)$ exists. 
\item For a continuous complex valued function $f=\Re(f)+i\cdot\Im(f)$, one has that $\omega_H(f)$ exists because $\omega_H(\Re(f))$ and $\omega_H(\Im(f))$ exist. \end{enumerate}
Let us prove that~$\omega_H$ is a measure. If~$f$ is a non-negative function, then it is a limit of finite sums of non-negative functions~$\otimes_v f_v$, hence~$\omega_H(f)$ is non-negative. By \cite[Chapter III, \S 1, \no 6, Theorem 1]{Integrationj}, one has that~$\omega_H$ is a Radon measure. 
\end{proof}
\begin{lem}\label{supportofmeasureo}
One has that $$\supp(\omega_H)=\overline{i(BG(F))}$$ and that $$ \tau_H=\omega_H(\overline{i(BG(F))}).$$
\end{lem}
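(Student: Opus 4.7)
The plan is to proceed in three steps. First, I compute the total mass of $\omega_H$: plugging the constant function $f\equiv 1$ into the definition from Lemma~\ref{existmeas} and invoking Theorem~\ref{residueofz} gives
\[
\omega_H\Big(\prod_{v\in M_F}BG(F_v)\Big)=\tau_{BG}\cdot\lim_{s\to 1^+}(s-1)^{b(c)}Z(s)=\tau_H.
\]
Second, I show the inclusion $\supp(\omega_H)\subseteq\overline{i(BG(F))}$; combined with the first step this yields the mass formula. Third, I use a translation-invariance argument to establish the reverse inclusion.

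For the second step, note that $\prod_v BG(F_v)$ is compact Hausdorff (a Tychonoff product of finite discrete sets), hence normal. For any compact $K$ disjoint from $\overline{i(BG(F))}$, Urysohn's lemma produces a continuous $f:\prod_v BG(F_v)\to[0,1]$ which is $1$ on $K$ and $0$ on $\overline{i(BG(F))}$. Since then $f(i(x))=0$ for every $x\in BG(F)$, the sum defining $\omega_H(f)$ vanishes identically, so $\omega_H(K)\leq\omega_H(f)=0$. Inner regularity of the Radon measure yields $\omega_H(\prod_v BG(F_v)\setminus\overline{i(BG(F))})=0$ and the inclusion follows.

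The third step is the heart of the proof. The key observation is that $\omega_H$ is equivalent, with bounded density, to each of its translates by elements of $i(BG(F))$. Indeed, for $g_0\in BG(F)$ and $g=i(g_0)$, applying the change of variables $y=g_0 x$ (valid because $i$ is a group homomorphism and $G$ is commutative) in the sum defining $((\tau_g)_*\omega_H)(f)$ gives
\[
((\tau_g)_*\omega_H)(f)=\frac{\tau_{BG}}{\#G(F)}\lim_{s\to 1^+}(s-1)^{b(c)}\sum_{y\in BG(F)}f(i(y))H(g_0^{-1}y)^{-s};
\]
Lemma~\ref{codx} applied to $i(g_0^{-1})\in BG(\AAA)$ furnishes $C=C(i(g_0^{-1}))>0$ with $C^{-1}H(y)\leq H(g_0^{-1}y)\leq C H(y)$ for all $y\in BG(F)$, so $C^{-1}\omega_H\leq (\tau_g)_*\omega_H\leq C\omega_H$ as Radon measures. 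Therefore $\supp(\omega_H)=\supp((\tau_g)_*\omega_H)=g\cdot\supp(\omega_H)$, which makes $\supp(\omega_H)$ stable under translation by $i(BG(F))$, and being closed, also under $\overline{i(BG(F))}$. Since $\prod_v BG(F_v)$ is a compact abelian topological group, $\overline{i(BG(F))}$ is a closed subgroup; picking any $p\in\supp(\omega_H)\subseteq\overline{i(BG(F))}$ (nonempty by the previous steps) then yields $\overline{i(BG(F))}=\overline{i(BG(F))}\cdot p\subseteq\supp(\omega_H)$, completing the argument.

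The main obstacle will be the measure-level comparison in the third step: transferring the pointwise inequality of Lemma~\ref{codx} into an inequality between $\omega_H$ and its translate requires care, because $\omega_H$ is defined only as a limit of $(s-1)^{b(c)}\sum_{x\in BG(F)}f(i(x))H(x)^{-s}$ on continuous $f$; one must argue the bound survives passage to the limit, and then extend it from continuous test functions to indicators of Borel sets by Radon regularity.
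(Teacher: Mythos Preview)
Your proof is correct, and the third step takes a genuinely different route from the paper's.

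For the reverse inclusion $\overline{i(BG(F))}\subset\supp(\omega_H)$, the paper proceeds by direct computation: given a basic open neighborhood $U_y$ of a rational point, it rewrites $\omega_H(\mathbf 1_{U_y})$ via the Poisson formula over the finite character group $\Xi_K^\perp$ and then invokes Lemma~\ref{intfinind} (which in turn rests on Corollary~\ref{trivialcharacter}) to bound the resulting integral $\int_{\Xi_K\cap j^{-1}(U_y)}H^{-s}\mu$ below. Your argument bypasses this entirely. You observe that Lemma~\ref{codx} makes $\omega_H$ quasi-invariant under translation by any $g\in i(BG(F))$, so $\supp(\omega_H)$ is $i(BG(F))$-stable, hence $\overline{i(BG(F))}$-stable; combined with $\supp(\omega_H)\subset\overline{i(BG(F))}$ and $\omega_H\neq 0$, the group structure forces equality.

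Your approach is lighter: it uses only Lemma~\ref{codx} and the existence of the limit from Lemma~\ref{existmeas}, and never unpacks the Poisson expansion or Lemma~\ref{intfinind}. The paper's approach, on the other hand, yields an explicit positive lower bound for $\omega_H(U_y)$ in terms of a concrete integral, which is slightly more informative. The ``obstacle'' you flag is not a real one: for real $s>1$ and nonnegative continuous $f$ the inequality between the two sums is immediate from Lemma~\ref{codx}, both limits exist by Lemma~\ref{existmeas}, so the inequality passes to $s\to 1^+$; and for Radon measures on a compact Hausdorff space, an inequality on all nonnegative continuous test functions implies the same on open sets, which is all you need to compare supports. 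Your Urysohn argument in step two is also a bit more careful than the paper's, which applies the defining limit formula to $\mathbf 1_U$ without comment.
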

\begin{proof}
We denote by~$\mathbf 1_X$ the characteristic function of~$X$. If we denote by $U=\prod_{\vMF}BG(F_v)-i(\overline{BG(F)})$, then \begin{align*}\omega_H(U)&=\omega_H(\mathbf 1_U)\\&=\frac{\tau_{BG}}{\# G(F)}\lim_{s\to 1^+}{(s-1)^{b(c)}}\sum_{x\in BG(F)}\mathbf{1}_U(x)H(x)^{-s}\\&=0,
\end{align*}
thus we obtain that $$\supp(\omega_H)\subset\overline{i(BG(F))}.$$ 
Let us prove the reverse inclusion. Let $(y_v)_v\in i(BG(F)),$ let~$\Sigma_y\supset\Sigma_H$ be a finite set of places of~$F$ containing all places~$v$ for which $y_v\not\in BG(\Ov)$ and let $U_y\subset\prod_{\vMF}BG(F_v)$ be an open neighbourhood of~$(y_v)_v$ of the form $$\prod_{v\in\Sigma_y}\{y_v\}\times \prod_{v\in M_F-\Sigma_x}BG(F_v) .$$   We will prove that $\omega_H(U_y)>0$.  Let us write $$K=\prod_{v\in\Sigma_y}\{1\}\times\prod_{v\in M_F-\Sigma_y}BG(\Ov)$$ and $\Xi_K=i(BG(F))K\subset BG(\AAA)$, one has that the index~$(BG(\AAA):\Xi_K)$ is finite.  We denote by $j:BG(\AAA)\to \prod_{\vMF}BG(F_v)$ the inclusion, which is a continuous homomorphism. One has that 
\begin{align*}j^{-1}(U_y)&=j^{-1}\bigg(\prod_{v\in\Sigma_y}\{y_v\}\times \prod_{M_F-\Sigma_y}BG(F_v)\bigg)\\&=\yyy j^{-1}\bigg(\prod_{v\in\Sigma_y}\{1\}\times\prod_{v\in M_F-\Sigma_y}BG(F_v)\bigg)\\
&=\yyy V,
\end{align*}
where $\yyy=((y_v)_{v\in \Sigma_y},(1)_{v\in M_F-\Sigma_y})$ and $$V=j^{-1}\bigg(\prod_{v\in\Sigma_y}\{1\}\times\prod_{v\in M_F-\Sigma_y}BG(F_v)\bigg).$$ Note that $\yyy\in\Xi_K$ and that the subgroup $V\subset BG(\AAA)$ is of finite index. 
Now, Lemma \ref{intfinind}, the fact that $\sum_{\chi\in\Xi_K^{\perp}}\chi=\#\Xi_K^{\perp}\cdot\mathbf{1}_{\Xi_K}$ and Proposition~\ref{thmofequidistribution} give that
\begin{align*}
\omega_H(U_y)&=\omega_H(\mathbf{1}_{U_y})\\
&=\tau_{BG}\lim_{s\to 1^+}(s-1)^{b(c)}Z^{\mathbf{1}_{U_y}}(s)\\
&=\lim_{s\to 1^+}(s-1)^{b(c)}\sum_{\chi\in\Xi_K^{\perp}}\int_{BG(\AAA)}(\mathbf 1_{j^{-1}(U_y)}\cdot H^{-s})\chi\mu\\
&=\lim_{s\to 1^+}(s-1)^{b(c)}\#\Xi_K^{\perp}\cdot \int_{BG(\AAA)}\mathbf 1_{\Xi_K}\mathbf 1_{j^{-1}(U_y)}H^{-s}\mu\\
&=\#\Xi_K^{\perp}\cdot\lim_{s\to 1^+}(s-1)^{b(c)}\int_{\Xi_K\cap j^{-1}(U_y)}H^{-s}\mu\\
&>0.
\end{align*}
It follows that $\supp(\omega_H)\supset i(BG(F))$ and hence that $$\supp(\omega_H)=\overline{i(BG(F))}.$$
Finally, we have that \begin{align*}\omega_H(\overline{i(BG(F))})&=\omega_H\big(\prod_{\vMF}BG(F_v)\big)\\&=\omega_H(\mathbf{1}_{\prod_v BG(F_v)})\\&= \tau_{BG}\lim_{s\to 1^+}\frac{(s-1)^{b(c)}}{\# G(F)}\sum_{x\in BG(F)}H(x)^{-s}\\&=\tau_{BG}\cdot\lim_{s\to 1^+}(s-1)\cdot Z(s)\\&=\tau_H.
\end{align*} 
\end{proof}
\begin{rem}
\normalfont Let~$F$ be a number field. Harari proves in \cite[Page 522]{haraprox} that {\it Brauer-Manin obstruction is the only obstruction to the weak approximation for~$G$}. Let us explain what he means by this. We assume that~$G$ is embedded as a closed subgroup scheme of $\GL_n$ for certain $n\geq 1$ and let us set $X=\GL_n/G$ to be the quotient variety. Then the closure of the image of $X(F)$ for the diagonal map to $\prod_{\vMF}X(F_v)$ is the Brauer-Manin set $\big(\prod_{\vMF}X(F_v)\big)^{\Br}$, i.e. the subset of $\prod_{\vMF}X(F_v)$ where the Brauer-Manin obstruction vanishes. Moreover, he explains that one could have chosen other embedding in $\GL_n$ or in $\SL_n$ and the same property would be true. Demeio told us that one can define Brauer-Manin set $\big(\prod_{\vMF}BG(F_v)\big)^{\Br}$ as the image of $\big(\prod_{\vMF}X(F_v)\big)^{\Br}$ for the induced map $\prod_{\vMF}X(\Fv)\to\prod_{\vMF}BG(F_v)$ (although there is a better definition in terms of certain pairings).  Then it is not hard to verify that the fact that Brauer-Manin obstruction is the only obstruction to the weak approximation for~$G$ translates into  $$\overline{i(BG(F))}=\bigg(\prod_{\vMF}BG(F_v)\bigg)^{\Br}.$$ 
We may compare this with Colliot-Th\' el\`ene's conjecture \cite[Conjecture 14.1.2]{brauergrothendieck} that the Brauer-Manin obstruction is the only obstruction to the weak approximation for rationally connected varieties.
\end{rem}
\subsubsection{}We modify the Dirichlet series by multiplying by the value of~$f$. We calculate the limit when the new series is multiplied by $(s-1)^{b(c)}$ and $s\to 1$.
\begin{prop}\label{genfz}
Let $f:\prod_{\vMF}BG(F_v)\to \RR_{\geq 0}$ be a continuous function. We set $$Z^f(s):=\frac{1}{\# G(F)}\sum_{x\in BG(F)}f(x)^sH(x)^{-s}.$$ 
\begin{enumerate}
\item For every~$\epsilon>0$, the series $Z^f(s)$ converges absolutely and uniformly to a holomorphic function in the domain $\Re(s)>1+\epsilon$. One has that $$\lim_{s\to 1^+}(s-1)^{b(c)}Z^f(s)=\frac{\omega_H(f)}{\tau_{BG}}.$$
\item Suppose that $f=\sum_{i=1}^n\lambda_if_i$, with $\lambda_i\in\RR$ and~$f_i$ elementary. Let $\Sigma$ be the set of all places containing $\Sigma_G\cup M_F^\infty$ such that for every $v\in M_F-\Sigma$ one has that $(f_i)_v\equiv 1$ and that~$H_v$ is~$BG(\Ov)$-invariant. One has that \begin{align*}
Z^f(s)&=\frac{1}{\tau_{BG}}\sum_{\chi\in \Xi^{\perp}_K}\int_{BG(\AAA)}f^sH^{-s}\chi\mu\\&=\frac{1}{\tau_{BG}}\sum_{\chi\in \Xi_K^{\perp}}\bigg(\int_{\prod_{v\in \Sigma}BG(F_v)}f_{\Sigma}^sH_{\Sigma}^{-s}\chi_{\Sigma}\mu_{\Sigma}\bigg) \cdot \prod_{v\in M_F-\Sigma}\wH_v(s,\chi_v),\end{align*}
where~$K=\prod_{\vMFz-\Sigma}BG(\Ov),$ and $\Xi_K^{\perp}$ denotes the set of characters in $BG^*(F)$ which vanish on~$K$, while the subscript~$\Sigma$ denotes the restriction of a function or a measure to $\prod_{v\in\Sigma}BG(F_v)$. If~$f\neq 0$, the function $s\mapsto Z^f(s)$ extends to a meromorphic function in the domain $\Re(s)\geq 1$ and admits a pole at $s=1$ of order~$b(c)$. Moreover, if~$F$ is a number field, then $s\mapsto Z^f(s)$ is holomorphic in the domain $\{\Re(s)\geq 1\}-\{1\}$ and if~$F$ is a function field, then $s\mapsto Z^f(s)$ is holomorphic in the domain $\{\Re(s)\geq 1-\epsilon\}-\{1+2ki\pi\log(q)^{-1}|k\in\ZZ\}$ for some~$\epsilon>0$.
\end{enumerate}
\end{prop}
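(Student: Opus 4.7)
The plan is to establish Part (1) by essentially unfolding the definition of $\omega_H$ from Lemma \ref{existmeas} and controlling the discrepancy between $f$ and $f^s$, and then to derive Part (2) by adapting the Poisson formula argument from the proof of Proposition \ref{poisson}, now applied to the $K$-invariant function $f^s H^{-s}$ on $BG(\AAA)$.

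For Part (1), absolute and uniform convergence in the domain $\Re(s) > 1 + \epsilon$ follows from the bound $|f(x)^s| \leq \|f\|_\infty^{\Re(s)}$, from Proposition \ref{convz}, and from Theorem \ref{residueofz}. To identify the limit, I would write
\[
(s-1)^{b(c)} Z^f(s) = \frac{(s-1)^{b(c)}}{\#G(F)} \sum_{x \in BG(F)} f(x) H(x)^{-s} + R(s),
\]
where $R(s) := (s-1)^{b(c)} \cdot \#G(F)^{-1} \sum_x (f(x)^s - f(x)) H(x)^{-s}$. The first term tends to $\omega_H(f)/\tau_{BG}$ by the very definition of $\omega_H$ in Lemma \ref{existmeas}. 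For $R(s)$, I would use that $f^s \to f$ uniformly on the compact space $\prod_{v \in M_F} BG(F_v)$ as $s \to 1^+$, which gives $|R(s)| \leq \|f^s - f\|_\infty \cdot (s-1)^{b(c)} \cdot Z(\Re(s))$; since $(s-1)^{b(c)} Z(s)$ is bounded near $s=1$ by Theorem \ref{residueofz}, this quantity tends to zero.

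For Part (2), I would first observe that by the choice of $\Sigma$, the function $x \mapsto f(x)^s H(x)^{-s}$ on $BG(\AAA)$ is invariant under the compact open subgroup $K = \prod_{v \in M_F - \Sigma} BG(\Ov)$. The subgroup $\Xi_K = i(BG(F)) K$ is of finite index in $BG(\AAA)$ by an argument parallel to Lemma \ref{importantchar}, so its dual $\Xi_K^\perp$ is finite. The hypotheses of Poisson summation \cite[Chapter II, \S 1, \no 8, Proposition 8]{TSpectrale} are verified exactly as in Lemma \ref{condofpoiss} with the bounded continuous function $f$; applying it together with Lemma \ref{tamagawabg} and the $K$-invariance (which kills Fourier transforms at characters outside $\Xi_K^\perp$) yields the first displayed identity. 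The second identity then follows from $\mu = \bigotimes_v \mu_v$ and the fact that $f_v \equiv 1$ and $H_v$ is $BG(\Ov)$-invariant for $v \notin \Sigma$, which allows factoring the integral into the $\Sigma$-part and the product of local Fourier transforms $\wH_v(s, \chi_v)$ over $v \notin \Sigma$.

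For the meromorphic continuation, the $\Sigma$-factor is a finite sum of entire functions of $s$, hence entire, while the tail product $\prod_{v \in M_F - \Sigma} \wH_v(s, \chi_v)$ has exactly the meromorphic extension and holomorphy domain described in Part (1) of Corollary \ref{trivialcharacter}. Summing over the finite set $\Xi_K^\perp$ preserves these properties and gives the claimed domains in the number field and function field cases. The pole at $s=1$ has order at most $b(c)$ by Corollary \ref{trivialcharacter}; equality then comes from Part (1), since $\lim_{s \to 1^+}(s-1)^{b(c)} Z^f(s) = \omega_H(f)/\tau_{BG}$ is nonzero precisely when $\omega_H(f) \neq 0$, which by Lemma \ref{supportofmeasureo} is the case for any nonzero nonnegative $f$ whose support meets $\overline{i(BG(F))}$. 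The subtlest step in the plan is the justification of the Poisson identity in its factorized form: one must interchange the Fourier inversion with the infinite Euler product, and I would handle this by invoking the normal convergence established in Lemma \ref{ftlfun} in the domain $\Re(s) > \lambda^{-1}$ and performing the passage first in the half-plane $\Re(s) > 1$ of absolute convergence before extending by meromorphic continuation.
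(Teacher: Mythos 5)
Your proposal follows essentially the same route as the paper's own proof. For Part (1), both arguments split off the remainder $\sum_x (f(x)^s - f(x))H(x)^{-s}$, use uniform convergence $f^s\to f$ on the compact product space to kill it against $(s-1)^{b(c)}Z(\Re(s))$, and identify the surviving limit with $\omega_H(f)/\tau_{BG}$ directly from the definition of $\omega_H$ in Lemma~\ref{existmeas}. For Part (2), both invoke the Poisson formula with the hypotheses checked as in Lemma~\ref{condofpoiss}, reduce the character sum to the finite group $\Xi_K^\perp$ by $K$-invariance, factor the adelic integral into the $\Sigma$-block and the tail of local Fourier transforms $\wH_v(s,\chi_v)$, and conclude from Corollary~\ref{trivialcharacter}.

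One small point: you correctly observe that the pole of $Z^f$ at $s=1$ has order exactly $b(c)$ only when $\omega_H(f)\neq 0$, i.e.\ when the support of $f$ meets $\overline{i(BG(F))}$; the paper's proof states this follows ``as $f\neq 0$,'' which is slightly imprecise (if $f$ is a nonzero nonnegative function vanishing on $\overline{i(BG(F))}$, then $f\circ i\equiv 0$ and $Z^f\equiv 0$). Your more careful phrasing is the right one. Do be aware, however, that appealing to Lemma~\ref{supportofmeasureo} here is a little delicate, since that lemma's proof itself relies on Proposition~\ref{genfz}; the cleaner resolution is simply to replace the hypothesis ``$f\neq 0$'' by ``$\omega_H(f)>0$'' (which is what is actually used downstream in Theorem~\ref{satfunf}), or to note directly that $f\circ i\not\equiv 0$ on $BG(F)$ forces $\omega_H(f)>0$ via a small-$\Sigma$ argument as in Lemma~\ref{intfinind}.
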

\begin{proof}
\begin{enumerate}
\item   It follows from Theorem \ref{residueofz} and \cite[Section 2.3, Proposition~7]{coursdarithmetique} that the Dirichlet series $Z(s)$ converges absolutely and uniformly in the domain $\Re(s)\geq 1+\epsilon$. The claim on the convergence and holomorphicity of $Z^f(s)$ in the domain $\Re(s)>1+\epsilon$ now follows by the fact that~$f$ is bounded. By the same theorem, one has $\lim_{s\to 1^+}(s-1)^{b(c)}Z(s)=\tau_H$ and by the compactness of $\prod_{\vMF}BG(F_v)$, one has that $f^s\to f$ uniformly. We deduce that $$\lim_{s\to 1^+}\frac{(s-1)^{b(c)}}{\# G(F)}\sum_{x\in BG(F)}(f(x)^s-f(x))H(x)^{-s}=0.$$ 
Thus
\begin{align*}\lim_{s\to 1^+}(s-1)^{b(c)}Z^f(s)&=\lim_{s\to 1^+}\frac{(s-1)^{b(c)}}{\# G(F)}\sum_{x\in BG(F)}f(x)H(x)^{-s}\\&\quad+\lim_{s\to 1^+}\frac{(s-1)^{b(c)}}{\# G(F)}\sum_{x\in BG(F)}(f(x)^s-f(x))H(x)^{-s}\\
&=\frac{\omega_H(f)}{\tau_{BG}}.
\end{align*}
\item 
In Lemma~\ref{condofpoiss}, we have verified the conditions of Poisson formula, and applying it gives:
\begin{align*}
Z^f(s)\hskip-2cm&\\&=\frac{1}{\tau_{BG}}\sum_{\chi\in \Xi^{\perp}_K}\int_{BG(\AAA)}f^sH^{-s}\chi\mu\\
&=\frac{1}{\tau_{BG}}\sum_{\chi\in \Xi^{\perp}_K}\bigg(\int_{\prod_{v\in \Sigma'}BG(F_v)}f_{\Sigma'}^sH_{\Sigma'}^{-s}\chi_{\Sigma'}\mu_{\Sigma'}\bigg) \cdot \prod_{v\in M_F-\Sigma'}\wH_v(s,\chi_v),
\end{align*}
whenever every quantity converges. It is evident that $$s\mapsto \int_{\prod_{v\in \Sigma'}BG(F_v)}f_{\Sigma'}^sH_{\Sigma'}^{-s}\chi_{\Sigma'}\mu_{\Sigma'}$$ is an entire function. By Corollary \ref{trivialcharacter}, one has that $\prod_{v\in M_F-\Sigma'}\wH_v(s,\chi_v)$ converges absolutely to a holomorphic function in the domain $\Re(s)>1$ and extends to a meromorphic function in the domain $\Re(s)\geq 1$ with a possible pole at $s=1$ of order no more than~$b(c)$.  We deduce that the same is valid for $s\mapsto Z^f(s)$. As $f\neq 0$, it follows from (1) that the pole exists and that is of order $b(c)$. Moreover, if $F$ is a number field, the function $s\mapsto\prod_{v\in M_F-\Sigma'}\wH_v(s,\chi_v)$ is holomorphic in the domain $\{\Re(s)\geq 1\}-\{1\}$. If~$F$ is a function field, the function $s\mapsto\prod_{v\in M_F-\Sigma'}\wH_v(s,\chi_v)$ is holomorphic in the domain $\{\Re(s)\geq 1-\epsilon\}-\{1+2ki\pi\log(q)^{-1}|k\in\ZZ\}$ for some~$\epsilon>0$. The two claims on holomorphicity of~$Z^f$ follow. The statement is proven.
\end{enumerate}
\end{proof}
\begin{rem}
\normalfont
By combining (1) and (2) of Proposition \ref{genfz}, we obtain a formula which can be used to compute $\omega_H(f)$ when $f=\sum \lambda_if_i,$ with~$\lambda_i\in\RR$ and~$f_i$ are elementary. By linearity, it is possible to write down the formula for~$\lambda_i\in\CC$. 
\end{rem}
\begin{rem}
\normalfont We remark that, contrary to the case of varieties,  the measure~$\omega_H$ is not always an Euler product of local measures, but only a finite sum of Euler products. This may happen even when $\overline{i(BG(F))}=\prod_{\vMF}BG(F_v)$ (e.g. when $G=\muu_m$). 
\end{rem}
\begin{thm}\label{satfunf}
Suppose that~$F$ is a function field. Let $$f:\prod_{\vMF}BG(F_v)\to \RR_{\geq 0}$$ be a continuous function such that~$\omega_H(f)> 0.$ 
One has that $$\#\{x\in BG(F)|H(x)\leq f(x)B\}\asymp_{B\to\infty} B\log(B)^{b(c)-1}.$$
\end{thm}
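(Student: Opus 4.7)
I aim to sandwich $f$ between two non-negative elementary functions $f_-\leq f\leq f_+$ and apply a function-field Tauberian argument to the corresponding height zeta functions, whose analytic behaviour is already furnished by Proposition~\ref{genfz}.

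For the upper bound, compactness of $\prod_v BG(F_v)$ and continuity of $f$ give $M:=\sup f<\infty$, so $\#\{x\in BG(F):H(x)\leq f(x)B\}\leq\#\{x:H(x)\leq MB\}$. The constant $f_+:=M$ is elementary, and by Theorem~\ref{residueofz} (equivalently Proposition~\ref{genfz}(2)), the series $Z^{f_+}(s)=M^sZ(s)$ is meromorphic on $\{\Re(s)\geq 1-\epsilon\}\setminus\{1+2ki\pi\log(q)^{-1}:k\in\ZZ\}$ with a pole of order $b(c)$ at $s=1$. For the lower bound, the hypothesis $\omega_H(f)>0$ with $f\geq 0$ continuous supplies some $y_0\in\supp(\omega_H)$ with $f(y_0)>0$; by continuity I then pick $\delta>0$ and an open cylindrical neighbourhood $U\ni y_0$ (of the form $\prod_v U_v$ with $U_v=BG(F_v)$ for all but finitely many $v$) on which $f>\delta$. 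Since each $BG(F_v)$ is finite and discrete, $f_-:=\delta\cdot\mathbf 1_U$ is a non-negative elementary function with $f_-\leq f$ and $\omega_H(f_-)=\delta\cdot\omega_H(U)>0$ (the latter because $y_0\in\supp(\omega_H)$), and
\begin{equation*}
\{x\in BG(F):H(x)\leq f_-(x)B\}=\{x:i(x)\in U,\;H(x)\leq\delta B\}\subseteq\{x:H(x)\leq f(x)B\}.
\end{equation*}
By Proposition~\ref{genfz}(2) applied to $\mathbf 1_U$, together with part~(1) ensuring the residue at $s=1$ equals $\omega_H(f_-)/\tau_{BG}>0$, the series $Z^{f_-}(s)$ has the same meromorphic continuation as $Z^{f_+}$ above with a pole of order exactly $b(c)$ at $s=1$.

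It now remains to convert this analytic control into counting. Since $c$ is $\QQ$-valued and $q_v=q^{\deg v}$, the height $H$ takes values in $q^{N^{-1}\ZZ}$ for some positive integer $N$; the substitution $t=q^{-s/N}$ turns both $Z^{f_+}(s)$ and $Z^{f_-}(s)$ into power series in $t$ with non-negative coefficients that extend to rational functions of $t$ whose singularities on $|t|\leq q^{-1/N}$ all lie on the circle $|t|=q^{-1/N}$, with a pole of order $b(c)$ at the real point $t=q^{-1/N}$. Positivity of coefficients (Pringsheim) forces this real pole to be of maximal order among those on its circle, and a standard partial-fraction analysis yields $\#\{x\in BG(F):H(x)\leq B\}\asymp B\log(B)^{b(c)-1}$ and analogously $\#\{x:i(x)\in U,\;H(x)\leq B\}\asymp B\log(B)^{b(c)-1}$; the sandwich $f_-\leq f\leq f_+$ then proves the theorem. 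The principal obstacle is this final Tauberian step: the periodic accumulation of poles of $Z$ on $\Re(s)=1$ (collapsing to finitely many poles on $|t|=q^{-1/N}$) prevents a sharp $\sim$-asymptotic, but positivity of coefficients guarantees that the pole at $s=1$ dominates and yields the claimed order $B\log(B)^{b(c)-1}$.
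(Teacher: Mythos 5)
Your overall architecture — sandwich $f$ between non-negative elementary functions $f_-\le f\le f_+$, invoke Proposition~\ref{genfz}(2) to control the resulting height zeta functions, pass to a power series in $t=q^{-s/N}$, and finish with a function-field Tauberian argument — is essentially the paper's strategy, and your lower-bound construction (picking $y_0\in\supp(\omega_H)$ with $f(y_0)>0$ and an elementary cylinder $U\ni y_0$) is a cleaner version of the paper's step~(2).

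There is, however, a genuine gap in the claim ``Since $c$ is $\QQ$-valued and $q_v=q^{\deg v}$, the height $H$ takes values in $q^{N^{-1}\ZZ}$.'' This ignores the local factors $H_v$ at the finitely many bad places $v\in\Sigma_H$: by Definition~\ref{definitionofheight}, the functions $c_v\colon BG(F_v)\to\RR_{\ge 0}$ at those places are \emph{arbitrary} real-valued functions, and $H_v(x)=q_v^{c_v(x)}$ need not be a rational power of $q$. Consequently $H$ does \emph{not} take values in $q^{N^{-1}\ZZ}$, the substitution $t=q^{-s/N}$ does not turn $Z^{\mathbf 1_U}$ or $Z$ into a power series in $t$, and the coefficient-extraction/partial-fraction step is not available. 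The paper fixes this first: in step~(3) of its proof it replaces $H$ by a height $H'$ with $H'_v\equiv 1$ for $v$ in a finite set containing $\Sigma_H$; by Lemma~\ref{boundba} (or the elementary observation that only finitely many places and finitely many local values are involved) $H'/H$ is bounded above and below, so an $\asymp$-asymptotic for one implies the same for the other, and $H'$ \emph{does} take values in $q^{\ZZ/r}$. This reduction is essential and should be added before your substitution. (Strictly speaking the paper also reduces to $\QQ$-valued $c$ in its step~(1); since Subsection~\ref{Fouriertransformsa} already assumes $c\colon G_*(\oF)\to\QQ_{\ge 0}$, you are entitled to assume this, but if you intend the theorem for real-valued $c$ you would need that sandwich too.)

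Two further imprecisions, both minor: first, the factors $M^s$ and $\delta^s$ in $Z^{f_+}(s)=M^sZ(s)$ and $Z^{f_-}(s)=\delta^s Z^{\mathbf 1_U}(s)$ are not functions of $t=q^{-s/N}$, so the Tauberian input should be applied to $Z(s)$ and to $Z^{\mathbf 1_U}(s)$ directly, counting $\{x:H(x)\le MB\}$ and $\{x:i(x)\in U,\ H(x)\le\delta B\}$. Second, Proposition~\ref{genfz}(2) gives meromorphic continuation of $Z^f$ to $\{\Re(s)\ge 1-\epsilon\}$, which under the substitution yields meromorphy of $Q(t)$ on a slightly larger disc $|t|<q^{-1/N}+\epsilon'$, not rationality of $Q$; also, Pringsheim by itself only locates a singularity at the positive real point $t=q^{-1/N}$ — that the pole order there dominates any other pole on $|t|=q^{-1/N}$ is a separate (standard) consequence of non-negativity of the coefficients. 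The paper instead simply appeals to a black-box Tauberian theorem (Theorem~A.5(c) of the reference on Artin--Schreier counting) at this point.
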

\begin{proof}
We split the proof in several parts. 
We start by making several simplifications on~$H$. 
\begin{enumerate}
\item We can suppose that~$c$ takes rational values. Indeed, for $r\in c(G_*(\oF))-\{0,1\}$, let us choose $r', r''\in\QQ_{>0}$ such that $1<r'<r<r''$ and such that for every $r, t\in c(G_*(\oF))-\{0,1\}$ one has that if $r<t$ then $r'<t'$ and $r''<t''$. We define a counting function $c':G_*(\oF)\to\QQ_{\geq 0}$ by $c'(x):= c(x)$ if $c(x)\in\{0,1\}$, otherwise $c'(x)=r'$, where $r=c(x)$. Analogously, one defines~$c''$. One can define a height~$H'$ by~$H'_v=H_v$ for $v\in\Sigma_{H}$ and $H'_v=q_v^{c'(\psi_v(x))}$. Analogously, one defines~$H''$. We have that that $H'(x)\leq H(x)\leq H''(x)$ for every $x\in BG(F)$. Thus if the statement is valid for the heights~$H'$ and~$H''$, it is also valid for the height~$H$. 
\item We prove that we can suppose that $f=\mathbf 1_U$ is the characteristic function of an open set~$U$ and that it is elementary (we call such open subsets elementary). The elementary open sets form a basis for the topology of $\prod_{\vMF}BG(F_v)$, hence the open set $\{x\in\prod_{\vMF}BG(F)|f(x)>0\}$ contains an elementary open~$V$. By finiteness of the spaces $BG(F_v)$, the elementary open sets are also closed. Hence, the set~$V$ is compact and it follows that there exists $\epsilon>0$ such that for every $x\in\prod_{\vMF}BG(F_v)$ one has that $f(x)\geq \epsilon\cdot\mathbf 1_V$. On the other side the function~$f$ is bounded and let~$C>0$ be such that for every $x\in\prod_{\vMF}BG(F_v)$, one has that $f(x)\leq C\cdot\mathbf 1_{\prod_{\vMF} BG(F_v)}$. 
\begin{align*}
\{x\in BG(F)|H(x)\leq \epsilon\mathbf 1_{V}B\}&\subset\{x\in BG(F)| H(x)\leq f(x)B\}\\&\subset \{x\in BG(F)|  H(x)\leq  CB\}.
\end{align*}
It follows thus to prove the statement for the characteristic function~$f=\mathbf 1_U$ of elementary open sets~$U$.
\item 
Changing~$H$ at finitely many places will not change the validity of the statement, hence we can suppose that there exists a finite set of places $\Sigma_G\subset\Sigma$ such that for every $v\in\Sigma$ one has that~$H_v=1$. 
This simplification provides that 
$$H(BG(F))\subset \{q^{\frac mr}|m\in\ZZ_{\geq 0}\}$$ for some~$r\in\ZZ_{>0}$.  
\item Let us complete the proof. For $n\in\ZZ_{\geq 0}$, we denote by~$a_n$ the number of $x\in BG(F)$ with $i(x)\in U$ having the height equal to~$q^{n/r}$, counted with multiplicity $\# G(F)^{-1}$. Let us introduce the variable $t=~q^{-s/r}$. We define power series $Q(t)=\sum _{n=0}^{\infty}a_nt^{n}$ so that $$Q(t)=Q(q^{-s/r})=Z^f(s).$$
By Proposition \ref{genfz}, the series defining $Z^f(s)$ converges absolutely for $\Re(s)> 1$ to a holomorphic function, the function $s\mapsto Z^f(s)$ extends to a meromorphic function in the domain $\Re(s)>1-\epsilon$ for some~$\epsilon>0.$ Moreover, as $\omega_H(\mathbf 1_U)=\omega_H(f)>0$, the function~$Z^f$ has a pole at~$s=1$ of order~$b(c)$ and no poles outside of the set $\{1+2ik\pi\log(q)^{-1}|k\in\ZZ\}$. 
Hence, the series~$Q(t)$ converges absolutely for $|t|<q^{-1/r}$ to a holomorphic function and it extends to a meromorphic function of the disk $|t|<q^{-1/r}+\epsilon$ for some~$\epsilon$ with the only pole at~$t=q^{-1/r}$ of order $b(c)$. By Part~(c) of \cite[Theorem A.5]{lagemanartinschreier}, for some $C>0$, one has that $$\sum_{n=0}^Ma_n\sim_{M\to\infty} C\cdot q^{M/r}\log(q^{M/r})^{b(c)-1}.$$
Finally, $B\asymp_{B\to\infty} q^{\lfloor\log_{q}(rB)\rfloor/r}$, and hence 
\begin{align*}
\#\{x\in BG(F)|i(x)\in U, H(x)\leq B\}\hskip-2cm&\\&=\sum_{n=0}^Ba_n\\&=\sum_{n=0}^{\lfloor\log_{q}(rB)\rfloor}a_n\\
&\sim_{B\to\infty}C\cdot q^{\lfloor \log_{q}(rB)\rfloor/r}\log(q^{\lfloor\log_{q}(rB) \rfloor/r})^{b(c)-1}\\
&\asymp_{B\to\infty} B\log(B)^{b(c)-1}.
\end{align*}
The statement follows.
\end{enumerate}
\end{proof}
When~$F$ is a number field, we prove that $G$-torsors are equidistributed in the product space $\prod_{\vMF}BG(F_v)$.
\begin{thm}
\label{thmofequidistribution} Suppose $F$ is a number field. We define a Borel measure~$\nu$ on $\prod_{\vMF}BG(F_v)$ by $$\nu:=\frac{i_*\coun_{BG(F)}}{\#G(F)}.$$
 For $B>0$, we define a Borel measure~$\nu_B$ on $\prod_{\vMF}BG(F_v)$ by $$\nu_B:=\frac{\nu}{\frac{\#\Sh^2(G)}{\# G^*(F)\cdot (b(c)-1)!}\cdot B\cdot \log(B)^{b(c)-1}}\cdot\mathbf 1_{\{(x_v)_v|H((x_v)_v)\leq B\}}.$$ The measures~$\nu_B$ converge vaguely to the measure~$\omega_H$ when~$B\to\infty$. In other words,
\begin{enumerate}
\item for any continuous $f:\prod_{\vMF}BG(F_v)\to\CC$ one has that $$\lim_{B\to\infty}\frac{\frac{1}{\# G(F)}\sum_{\substack{x\in BG(F)\\H(x)\leq B}}f(x)}{B\cdot\log(B)^{b(c)-1}}=\frac{\#\Sh^2(G)}{\# G^*(F)\cdot (b(c)-1)!}\cdot \omega_H(f),$$
\item for every open set $U\subset\prod_{\vMF}BG(F_v)$ the boundary of which is $\omega_H$-negligible, one has that \begin{multline*}\frac{\#\{x|i(x)\in U, H(x)\leq B\}}{\# G(F)}\\=\frac{\#\Sh^2(G)\cdot\omega_H(U)}{\# G^*(F)\cdot (b(c)-1)!}\cdot {B\log(B)^{b(c)-1}}+o(B\log(B)^{b(c)-1}).
\end{multline*}
\end{enumerate} 
 %
%
%
\end{thm}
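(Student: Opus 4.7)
The plan is to deduce the theorem from Proposition~\ref{genfz} via a Tauberian argument, first establishing Part~(1) by passing from elementary characteristic functions to arbitrary continuous test functions, and then deducing Part~(2) by a standard sandwiching procedure.

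We begin Part~(1) in the special case $f = \mathbf{1}_U$ where $U = \prod_{v \in \Sigma} U_v \times \prod_{v \notin \Sigma} B\GG(\Ov)$ is an elementary clopen set (for some finite $\Sigma \supset \Sigma_G \cup M_F^\infty$). Because $\mathbf{1}_U^s = \mathbf{1}_U$, the series
\[
Z^{\mathbf{1}_U}(s) = \frac{1}{\# G(F)}\sum_{\substack{x \in BG(F) \\ i(x) \in U}} H(x)^{-s}
\]
has non-negative coefficients, and Proposition~\ref{genfz} asserts that it converges absolutely for $\Re(s) > 1$ and extends meromorphically to $\{\Re(s) \geq 1\}$ with its only pole on this line located at $s=1$, of order $b(c)$ and leading coefficient $\omega_H(\mathbf{1}_U)/\tau_{BG}$. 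A standard Tauberian theorem (such as \cite[Proposition 2.10]{Igusa}) then yields
\[
\frac{1}{\# G(F)}\#\{x \in BG(F) : i(x) \in U,\ H(x) \leq B\} \sim \frac{\omega_H(\mathbf{1}_U)}{\tau_{BG} \cdot (b(c)-1)!}\, B \log(B)^{b(c)-1}.
\]
Combining with the identity $\tau_{BG} = \# G^*(F)/\#\Sh^2(G)$ from Lemma~\ref{tamagawabg} recovers the expected leading constant. By linearity this holds for every real linear combination $g = \sum_i \lambda_i \mathbf{1}_{U_i}$ of such indicators. In particular, taking $g \equiv 1$, we obtain the total upper bound $\#\{x \in BG(F) : H(x) \leq B\} = O\bigl(B \log(B)^{b(c)-1}\bigr)$.

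We now extend to arbitrary continuous $f : \prod_{v \in M_F} BG(F_v) \to \CC$. The indicator functions of elementary clopen sets form an algebra containing the constants and separating points of the compact space $\prod_v BG(F_v)$, so by Stone--Weierstrass every continuous $f$ is a uniform limit of step functions $g_\epsilon = \sum_i \lambda_i \mathbf{1}_{U_i}$ with $\|f - g_\epsilon\|_\infty < \epsilon$. The bound of the previous paragraph yields
\[
\biggl|\frac{1}{\# G(F)}\sum_{\substack{x \in BG(F) \\ H(x) \leq B}} (f - g_\epsilon)(i(x))\biggr| \leq \epsilon \cdot \frac{\#\{x : H(x) \leq B\}}{\# G(F)} = O\bigl(\epsilon\, B \log(B)^{b(c)-1}\bigr),
\]
while $|\omega_H(f) - \omega_H(g_\epsilon)| \leq \epsilon\, \omega_H(\prod_v BG(F_v))$. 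Dividing by $B \log(B)^{b(c)-1}$, letting $B \to \infty$, and then letting $\epsilon \to 0$ proves Part~(1) in full generality.

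For Part~(2), let $U$ be an open set with $\omega_H$-negligible boundary; then $\omega_H(\overline U) = \omega_H(U)$. Since $\prod_v BG(F_v)$ is compact metrisable and $\omega_H$ is a finite Radon measure, by inner/outer regularity of $\omega_H$ together with Urysohn's lemma, for every $\epsilon > 0$ we can find continuous functions $0 \leq f_- \leq \mathbf{1}_U$ and $\mathbf{1}_{\overline U} \leq f_+ \leq 1$ with $\omega_H(f_+) - \omega_H(f_-) < \epsilon$. Applying Part~(1) to $f_\pm$ and sandwiching yields Part~(2). The main technical obstacle is the Tauberian step: it requires that the meromorphic continuation of $Z^{\mathbf{1}_U}$ supplied by Proposition~\ref{genfz}(2) has no additional poles on the vertical line $\Re(s) = 1$, which is exactly the number-field conclusion of that proposition and is precisely what fails in the function-field case (forcing the weaker bound established in Theorem~\ref{satfunf}).
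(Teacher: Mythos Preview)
Your strategy is the same as the paper's: apply Proposition~\ref{genfz} to elementary test functions, feed the resulting analytic information into a Tauberian theorem, and then upgrade to all continuous~$f$ by density. The paper compresses your last two steps into a single reference (\cite[Proposition~2.10]{Igusa}, which is an equidistribution criterion, not a Tauberian theorem---the Tauberian input in the paper is \cite[Theorem~3.3.2]{aniso}), while you write out the Stone--Weierstrass/Urysohn arguments by hand. That is fine and arguably more transparent.

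There is, however, one genuine slip. You take your ``elementary clopen sets'' to be
\[
U \;=\; \prod_{v\in\Sigma} U_v \times \prod_{v\notin\Sigma} B\GG(\Ov),
\]
with $B\GG(\Ov)$ outside~$\Sigma$. This breaks the argument in two places. First, $\mathbf 1_U$ is then \emph{not} an elementary function in the paper's sense (nor a finite linear combination of such), because for $v\notin\Sigma$ the factor is $\mathbf 1_{B\GG(\Ov)}\neq 1$; hence Proposition~\ref{genfz}(2) does not apply to it. Second, these indicators do not separate points of the full product $\prod_v BG(F_v)$ (any point with $x_v\notin B\GG(\Ov)$ for infinitely many~$v$ lies in none of them), so Stone--Weierstrass fails. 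The fix is immediate: take
\[
U \;=\; \prod_{v\in\Sigma} U_v \times \prod_{v\notin\Sigma} BG(F_v),
\]
i.e.\ the full local factor outside~$\Sigma$. Then $\mathbf 1_U=\otimes_v f_v$ with $f_v\equiv 1$ for $v\notin\Sigma$, so it is elementary and Proposition~\ref{genfz}(2) applies; and these sets form a basis of clopens separating points, so the approximation step goes through. With this correction (and the right citation for the Tauberian step), your proof is complete and matches the paper's.
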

\begin{proof}
Let~$f:\prod_{\vMF}BG(F_v)\to\RR_{\geq 0}$ be a function which is a finite linear combination of elementary functions. By Proposition~\ref{genfz} and the Tauberian result from \cite[Theorem 3.3.2]{aniso}, one has that \begin{align*}\nu(\{x\in i(BG(F))| H(x)\leq f(x)\cdot B\})\sim_{B\to\infty}\frac{\#\Sh^2(G)\cdot\omega_H(f)}{\#G^*(F)\cdot(b(c)-1)!}{B\log(B)^{b(c)-1}}.
\end{align*} 
By Part (c) and Part (d) of proof of Lemma \ref{existmeas}, we have that the functions, which are finite linear combinations of elementary functions~$\otimes_vf_v$, are dense in the space of continuous real valued functions $\prod_{\vMF}BG(F_v)\to~\RR$. The statement is now a special case of \cite[Proposition 2.10]{Igusa}. 
\end{proof}
We have noted in Paragraph~\ref{parbreakthin} that in the commutative case there are no breaking thin maps. Thus Theorems~\ref{satfunf} and~\ref{thmofequidistribution}, together with the fact that for every height~$H$ of type~$c$, the type of the height~$H^{\frac{1}{a(c)}}$ is normalized, imply that Conjecture~\ref{malletale} is verified for~$G$ commutative. Moreover, in the number field case, for non-normalized heights~$H$ of type~$c$ we get that 
\begin{align*}\frac{\#\{x\in BG(F)|H(x)\leq B\}}{\# G(F)}=\frac{\#\Sh^2(G)\cdot\omega_H\big(\overline{i(BG(F))}\big)}{\# G^*(F)\cdot (b(c)-1)!}B^{a(c)}\log(B)^{b(c)-1}.
\end{align*}
\begin{rem}
 \normalfont Peyre conjectures in \cite{Peyre}, that for a smooth Fano $F$-variety~$X$, the leading constant in the asymptotic formula for the number of points of bounded height is a product of three factors: a factor connected to the location of the anticanonical line bundle in the ample cone, the cardinality of the cohomology group $\# H^1(\Gamma_F, \Pic(\overline F))$ and the volume of the subset~$X(\overline F)\subset \prod_{\vMF}X(F_v)$ for a measure analogue to~$\omega_H$. In our situation, the volume factor appears, but we are unsure whether the other factors can be understood as in the case of varieties.
\end{rem}
\bibliography{bibliography}
\bibliographystyle{acm}
\end{document}